\documentclass{amsart}
\usepackage[utf8]{inputenc}     % for éô
\usepackage[english]{babel}     % for proper word breaking at line ends
\usepackage[a4paper, left=1.5in, right=1.5in, top=1.5in, bottom=1.5in]{geometry}
\usepackage{graphicx}           % for ?
\graphicspath{{images/}}

\usepackage[hidelinks]{hyperref}           % page numbers and '\ref's become clickable
\usepackage{doi}						   % makes DOIs clickable in references
\usepackage{caption}
\usepackage{adjustbox}
\usepackage{verbatim}
\usepackage[titletoc]{appendix}

\usepackage{tensor}
\usepackage{accents}

\usepackage{multirow}

\usepackage{amsmath,amssymb}    % for better equations
\usepackage{amsthm}             % for better theorem styles
\usepackage{mathtools}          % for greek math symbol formatting
\usepackage{mathrsfs,color} 	%a4wide
\usepackage[shortlabels]{enumitem}           % for control of 'enumerate' numbering
\usepackage{listings}           % for control of 'itemize' spacing
\setlist[enumerate, 1]{label = \roman*), ref = \roman*),font = \textup}

\usepackage[foot]{amsaddr}
\usepackage{orcidlink}

\title[A Pontryagin class obstruction for PE and PM Weyl curvature tensors]{A Pontryagin class obstruction for purely electric and purely magnetic Weyl curvature tensors}
\keywords{Pontryagin class, algebraic curvature tensor, Weyl curvature tensor, purely electric/magnetic, Petrov types, umbilic hypersurfaces}
\subjclass[2020]{83C20, 57R20, 53C50, 53C12, 53C18}

\author[T.~de Kok]{Thijs de Kok\, \orcidlink{0009-0004-6433-3645}}
\address{Department of Mathematics, IMAPP, Radboud University, PO Box 9010, Postvak 59, 6500 GL Nijmegen, The Netherlands}
\email{\href{mailto:thijs.dekok@ru.nl}{thijs.dekok@ru.nl}}
\thanks{\emph{Acknowledgements:} This publication is part of the Vidi project with file number VI.Vidi.233.024 which is financed by the Dutch Research Council (NWO) under the grant https://doi.org/10.61686/LGCGZ85275. The author would also like to thank his supervisor, A. Burtscher, for the many discussions and the very careful proofreading of earlier versions of this work.}

\date{\today}

\numberwithin{equation}{section}

\newtheorem{innercustomgeneric}{\customgenericname}
\providecommand{\customgenericname}{}
\newcommand{\newcustomtheorem}[2]{%
	\newenvironment{#1}[1]
	{%
		\renewcommand\customgenericname{#2}%
		\renewcommand\theinnercustomgeneric{##1}%
		\innercustomgeneric
	}
	{\endinnercustomgeneric}
}

\theoremstyle{plain}
\newtheorem{thm}{Theorem}[section]
\newtheorem*{thm*}{Theorem}
\newtheorem{cor}[thm]{Corollary}
\newtheorem*{cor*}{Corollary}
\newtheorem{prop}[thm]{Proposition}
\newtheorem{lem}[thm]{Lemma}

\newcustomtheorem{customthm}{Theorem}

\theoremstyle{definition}
\newtheorem{defn}[thm]{Definition}
\newtheorem{ex}[thm]{Example}

\theoremstyle{remark}
\newtheorem{rem}[thm]{Remark}

\newcommand{\id}{\operatorname{id}}

\newcommand{\eps}{\varepsilon}
\newcommand{\II}{\operatorname{II}}

\newcommand{\tr}{\operatorname{tr}}
\newcommand{\sgn}{\operatorname{sgn}}
\newcommand{\End}{\operatorname{End}}

\newcommand{\fl}[1]{\lfloor #1 \rfloor}
\newcommand{\ol}[1]{\overline{#1}}

\DeclareMathOperator{\Ric}{Ric}

\newcommand\restr[2]{{% we make the whole thing an ordinary symbol
		\left.\kern-\nulldelimiterspace % automatically resize the bar with \right
		#1 % the function
		\vphantom{\big|} % pretend it's a little taller at normal size
		\right|_{#2} % this is the delimiter
}}

\def\R{\mathbb{R}}

\def\C{\mathbb{C}}
\def\Z{\mathbb{Z}}

\def\eps{\varepsilon}

\makeatletter
\newcommand*\owedge{\mathpalette\@owedge\relax}
\newcommand*\@owedge[1]{%
	\mathbin{%
		\ooalign{%
			$#1\m@th\bigcirc$\cr
			\hidewidth$#1\m@th\wedge$\hidewidth\cr
		}%
	}%
}
\makeatother

\newcounter{char}
\loop\ifnum\value{char}<27
\expandafter\edef\csname c\Alph{char}\endcsname{\noexpand\mathcal{\Alph{char}}}  % mathcal
\expandafter\edef\csname d\Alph{char}\endcsname{\noexpand\mathbb{\Alph{char}}}   % mathbb
\expandafter\edef\csname f\Alph{char}\endcsname{\noexpand\mathfrak{\Alph{char}}} % mathfrak
\expandafter\edef\csname e\Alph{char}\endcsname{\noexpand\mathsf{\Alph{char}}}   % mathsf
\expandafter\edef\csname s\Alph{char}\endcsname{\noexpand\mathscr{\Alph{char}}}  % mathscr
\addtocounter{char}{1}
\repeat

\begin{document}
	%%%%% TITLE AND ABSTRACT %%%%%
	
	\begin{abstract}
		Do all manifolds that admit Lorentzian metrics also admit such metrics that have a purely electric (PE) or purely magnetic (PM) Weyl curvature tensor?
		To (partially) answer this question, we show that for all algebraic curvature tensors on a $4k$-dimensional scalar product space that are even or odd under the action of a orientation-reversing isometry, the products of Pontryagin forms that land in the top-degree exterior power of the dual vector space vanish. We use this to derive the vanishing of all products of Pontryagin classes that land in the top-degree de Rham cohomology of a $4k$-dimensional pseudo-Riemannian manifold with a PE or PM Riemann or Weyl curvature tensor. For compact manifolds, this gives nontrivial cohomological obstructions to the existence of such pseudo-Riemannian metrics with globally PE or PM Riemann or Weyl curvature tensors. These obstructions can be linked to the existence of Lorentzian metrics of several Petrov subtypes, which play an important role in classifying exact solutions to the Einstein equations. Moreover, they can be applied to foliations by nondegenerate umbilic hypersurfaces, which may appear as timeslices of spacetimes.
	\end{abstract}
	
	\maketitle
	
	%\setcounter{tocdepth}{1}
	%\setcounter{secnumdepth}{1}
	%\tableofcontents
	%%%%%% INTRODUCTION %%%%%%
	\section{Introduction}
	\label{sect: intro}
	One of the big efforts in general relativity is finding and classifying exact solutions to the Einstein equations. By now, a vast amount of literature is available on this topic. See, e.g.,\ the book by Stephani et al \cite{Kramer03Exactsolutions}, that gives a survey on all exact solutions known up to 1999. A major tool for classifying exact solutions is by posing conditions on the structure of their curvature tensors. One approach is via the splitting of the Riemann/Weyl curvature tensor of a 4-dimensional Lorentzian manifold satisfying the vacuum Einstein equations into an ``electric'' and ``magnetic'' part, based on a fixed timelike unit vector field. This was introduced already in the 1950s by Matte \cite[\S 4]{Matte1953nouvellessolutions}. The terminology is motivated by the observation that the second Bianchi identity becomes a Maxwell-like equation with source terms \cite[\S 5]{Matte1953nouvellessolutions}. 
	
	On a 4-dimensional oriented Lorentzian manifold, this splitting is performed \cite[Ch.\ 3.5]{Kramer03Exactsolutions} by first introducing the complex-valued \emph{self-dual Weyl curvature tensor} $\widetilde{W}$ with components 
	\[
	\tensor{\widetilde{W}}{_a_b_c_d} = \tensor{W}{_a_b_c_d} + \frac{1}{2}i\tensor{\eps}{_a_b_e_f}\tensor{W}{^e^f_c_d},
	\]
	where $\eps$ is the totally anti-symmetric Levi-Civita symbol satisfying $\tensor{\eps}{_0_1_2_3} = -1$ with respect to a local oriented orthonormal basis $(e_0, e_1, e_2, e_3)$. Choosing a unit timelike vector field $T$ on $M$, we can then define the tensor fields $E$ and $B$ as 
	\[
	\tensor{E}{_a_c} + i\tensor{B}{_a_c} := \tensor{\widetilde{W}}{_a_b_c_d}\tensor{T}{^b}\tensor{T}{^d},
	\]
	which are the \emph{electric} and \emph{magnetic} parts of the Weyl curvature tensor, respectively. The electric part $E$ captures gravitational effects that resemble tidal forces from Newtonian mechanics, whereas the magnetic part $B$ has no Newtonian counterpart \cite[p.\ 607--608]{Ellis1971Relativisticcosmology}.
	
	The Weyl curvature tensor is \emph{purely electric (PE)} at $p \in M$ if its magnetic part $B$ vanishes and \emph{purely magnetic (PM)} if its electric part $E$ vanishes. Note that this is a pointwise property. PE spacetimes arise naturally in studying general relativity: all spacetimes with a unit timelike congruence that is shear-free and irrotational have PE Weyl curvature tensors with respect to the congruence \cite{Trumper1965typeIgravfields}. This includes static spacetimes \cite[Ch.\ 6.2]{Kramer03Exactsolutions}, warped products of the form $\R\times_f\Sigma$ (this follows from a variation of \cite[Prop.\ 42]{ONeill83SemiRiemannian} for the Weyl curvature tensor), and spacetimes with irrotational perfect fluids \cite{Barnes1973perfectfluids, Barnes1989irrfluidswithPEWeyl}. On the contrary, some examples of PM spacetimes are known \cite{Arianrhod1994magneticcurvatures, Lozanovski1999irrperffluidwithPMWeyl, Lozanovski1999perffluidsPMWeyl} but PM spacetimes are rather elusive in general and do not arise naturally. For example, Van den Bergh 	\cite{VdBergh2003PGMvacuumspacetimes} showed that there are no PM vacuum solutions to the Einstein equations and Lesame \cite{Lesame1995orrdustPM} showed that there are no irrotational dust solutions with PM Weyl curvature tensors. More generally, Maartens, Lesame and Ellis \cite{Maartens1998AntiNewtonianlikeuniverses} showed that metrics with PM Weyl curvature tensors are inconsistent with the Einstein equations in general. 
	
	A classical result is that a manifold $M$ admits a Lorentz metric if and only if it is either noncompact or is compact and has Euler-characteristic equal to 0. A natural follow-up question to ask is whether a manifold $M$ that admits Lorentzian (or in general pseudo-Riemannian) metrics also admits such metrics with PE or PM Weyl curvature tensors. For 4-dimensional Lorentzian manifolds, the property of having a PE or PM Weyl curvature tensor was found to be closely related to the Petrov type of the Weyl curvature tensor. In particular, the Weyl curvature tensor $W$ of a 4-dimensional Lorentzian manifold $M$ is PE/PM at $p \in M$ if it has Petrov type $O$, $I$ or $D$ with real/imaginary eigenvalues \cite[Rem.\ 3.8]{Hervik2013minimaltensors} \cite[p.\ 1556]{McIntosh1994PEPMWeyl} \cite{Trumper1965typeIgravfields, Wylleman2006Completeclassification}. Furthermore, Avez \cite{Avez70charclassesandWeyl} showed in the 1970s that geometric properties of the Weyl curvature tensor can be related to the smooth and topological structure of the manifold via the Pontryagin forms and classes. The \emph{Pontryagin forms} of a pseudo-Riemannian manifold $(M, g)$ are differential forms 
	\[
	\varpi_k(M) \in \Omega^{4k}(M),
	\]
	which are constructed from the Riemann curvature tensor of the Levi-Civita connection of $(M, g)$. The Pontryagin forms are closed differential forms and therefore define classes in the de Rham cohomology of the manifold $M$ \cite[p.\ 296]{Milnor74charclasses}. The resulting \emph{Pontryagin classes} 
	\[
	p_k(M):=[\varpi_k(M)] \in H^{4k}_{dR}(M)
	\]
	do not depend on the chosen metric $g$ and are proper invariants of $M$ \cite[p.\ 298]{Milnor74charclasses}. We refer to Section \ref{sect: Pontryagin forms on pseudo-Riemannian manifolds} for a precise introduction of the Pontryagin forms and classes. Clearly, the Pontryagin form $\varpi_k(M)$ and hence the Pontryagin class $p_k(M)$ vanish if $4k$ is greater than the dimension of $M$. In particular, for 4-dimensional manifolds $M$, only the first Pontryagin form and class may be nontrivial. Avez showed that if $M$ admits a locally conformally flat metric---that is, with vanishing Weyl curvature tensor $W$, which is by definition of Petrov type $O$---then the first Pontryagin form $\varpi_1(M)$ (and therefore also $p_1(M)$) vanishes. Later, Zund \cite{Zund1968topoaspectsBelPetrov} and Porter and Thompson \cite{Porter71topoinGR} extended the result by Avez to other Petrov types, including type $D$ with real or imaginary eigenvalues, which are PE or PM. This shows that on 4-dimensional Lorentzian manifolds, the Pontryagin class $p_1(M)$ may function as an obstruction to the existence of Lorentzian metrics with PE or PM type $D$ Weyl curvature tensors. Recently, Hervik, Ortaggio and Wylleman \cite{Hervik2013minimaltensors} observed that for the Weyl curvature tensor, being PE or PM with respect to a timelike unit vector $T$ is equivalent to being even or odd under the action of the orthogonal reflection along the hyperplane $T^{\bot}$. Using their characterization, the electric-magnetic splitting naturally extends to algebraic curvature tensors on vector spaces of arbitrary signature and dimension and can be performed for all vectors $T$ that are not null. 
	
	In this paper, we discuss in what way the Pontryagin classes of a manifold $M$ give an obstruction to the existence of PE or PM Weyl curvature tensors in the sense of Hervik et al. In Section \ref{sect: main result}, we will obtain the following precise restrictions on the Pontryagin forms and classes of a manifold admitting pseudo-Riemannian metrics with PE or PM Weyl curvature tensors. To the best of our knowledge, this is the first result relating the property of being PE or PM to the Pontryagin classes of the manifold directly. For the precise statement of the presented results, we refer to the corresponding results in the text.
	
	\begin{thm}[Theorem \ref{thm: vanishing Pontryagin forms mfds}]
		\label{thm: main result}
		Let $(M, g)$ be a $4k$-dimensional pseudo-Riemannian manifold and suppose that the Weyl curvature tensor $W$ is PE or PM at each point. Then for all multi-indices $\alpha = (\alpha_1, \ldots, \alpha_l) \in \Z_{> 0}^l$ with $\alpha_1+\cdots+\alpha_l = k$, the $4k$-forms
		\begin{equation*}
			\varpi_{\alpha}(W) := \varpi_{\alpha_1}(W)\wedge\cdots\wedge\varpi_{\alpha_l}(W) = 0 \in \Omega^{4k}(M).
		\end{equation*}
		In particular, the class
		\begin{equation*}
			p_{\alpha}(M):= p_{\alpha_1}(M)\smile\cdots\smile p_{\alpha_l}(M) = 0 \in H^{4k}_{dR}(M).
		\end{equation*}
	\end{thm}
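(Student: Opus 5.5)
The plan is to reduce the statement to a pointwise algebraic fact about a single algebraic curvature tensor on a $4k$-dimensional scalar product space $(V,\langle\cdot,\cdot\rangle)$, and then globalize.

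\emph{Step 1 (setup).} Fix $p\in M$ and put $V=T_pM$. View $W_p$ as an $\End(V)$-valued $2$-form $\Omega$, with $\Omega(x,y)=W(x,y)\in\mathfrak{so}(V)$. The Pontryagin form $\varpi_j(W)$ is (up to normalization) the coefficient of $t^{2j}$ in $\det(\id+\tfrac{t}{2\pi}\Omega)$. Equivalently, it is a universal polynomial in the trace forms $\tr(\Omega^{\wedge 2m})\in\Lambda^{4m}V^*$. Hence $\varpi_\alpha(W)_p$ is a polynomial in these trace forms. It is $O(V)$-equivariant: for any isometry $\phi$ of $V$ we have
\[
\varpi_\alpha(\phi^*W)=\phi^*\varpi_\alpha(W),
\]
because the trace is conjugation-invariant, $\tr\bigl(\phi^{-1}\Omega(\phi\,\cdot,\phi\,\cdot)\phi\bigr)^{\wedge 2m}=\phi^*\tr(\Omega^{\wedge 2m})$.

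\emph{Step 2 (key algebraic lemma).} Suppose $W$ is PE or PM at $p$. Following Hervik et al., this means there is a non-null $T$ such that, for the reflection $\rho$ in $T^\perp$, we have $\rho^*W=\pm W$. Here $\rho$ is an orientation-reversing isometry. Each $\varpi_j$ is homogeneous of degree $2j$ in $\Omega$, so $\varpi_\alpha$ is homogeneous of even degree $2k$ in $W$. Therefore
\[
\varpi_\alpha(\rho^*W)=(\pm1)^{2k}\varpi_\alpha(W)=\varpi_\alpha(W).
\]
Equivariance gives $\varpi_\alpha(\rho^*W)=\rho^*\varpi_\alpha(W)$. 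Since $\varpi_\alpha(W)\in\Lambda^{4k}V^*$ is top degree, it is a multiple of the volume form, so $\rho^*\varpi_\alpha(W)=\det(\rho)\,\varpi_\alpha(W)=-\varpi_\alpha(W)$. Combining the two identities forces $\varpi_\alpha(W)=0$.

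The main obstacle is making the equivariance airtight in arbitrary signature. One must be careful that the pullback of a curvature tensor under an isometry acts on the endomorphism part by conjugation and on the form part by pullback simultaneously. One must also use that $\mathfrak{so}(V,\langle\cdot,\cdot\rangle)$-valued traces behave as in the Riemannian case. I would write this out in an orthonormal basis with the signs $\eps_i=\langle e_i,e_i\rangle$, taking $T=e_1$ after normalizing, so that $\rho$ is $\mathrm{diag}(-1,1,\dots,1)$. In that basis the parity of $W$ amounts to the vanishing of components with an odd (resp.\ even) number of indices equal to $1$. One can then also verify directly that every term of the top form involves each index once, hence contains an odd number of $1$'s and vanishes.

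\emph{Step 3 (globalization).} The pointwise vanishing at each $p$ gives $\varpi_\alpha(W)=0$ in $\Omega^{4k}(M)$. For the cohomology statement, I would use the fact (presumably established in the section on Pontryagin forms) that the Pontryagin forms built from $W$ coincide with those built from the full Riemann tensor $R$. This holds because $R-W$ is a Kulkarni–Nomizu product $g\owedge P$, which contributes nothing to the traces $\tr(\Omega^{2m})$; this is the conformal invariance of Pontryagin forms. Thus $\varpi_\alpha(R)=0$, and $p_\alpha(M)=[\varpi_{\alpha_1}(R)]\smile\cdots\smile[\varpi_{\alpha_l}(R)]=[\varpi_\alpha(R)]=0$.
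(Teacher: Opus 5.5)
Your proof is correct. It has the same skeleton as the paper's argument:
\begin{enumerate}
\item show that the Pontryagin forms are fixed by pullback under an orientation-reversing isometry;
\item use that on the one-dimensional space $\wedge^{4k}V^*$ this pullback is multiplication by $\det = -1$;
\item globalize pointwise via $\varpi_k(Rm)=\varpi_k(W)$ (Theorem \ref{thm: Pontryagin forms equal for weyl curvature}, Corollary \ref{cor: Pontryagin class manifold}).
\end{enumerate}

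The difference is in how you prove the invariance step, which is the paper's Lemma \ref{lem: Pontryagin forms of (anti)symmetric curvature is symmetric}. The paper first rewrites $\varpi_k(C)$ as $F_{2k}(\hat C^{*k},\hat C^{*k})$ via Theorem \ref{thm: expression Pontryagin form}, which needs the appendix computation and Chern's Pfaffian identity. It then moves $\wedge^2\theta$ through the $*$-powers using Lemma \ref{lem: (anti)symmetry on curvature operator} and \eqref{eq: compatibility * back}--\eqref{eq: compatibility * front}, and the sign drops out as $(\pm1)^{2k}$. You work directly from Definition \ref{def: Pontrayagin form} instead. The curvature matrix of $\phi^*W$ is $\phi^{-1}\Omega(\phi\,\cdot,\phi\,\cdot)\phi$, so conjugation invariance of $\sigma_{2j}$ gives $O(V,g)$-equivariance, and even homogeneity of degree $2k$ removes the sign.

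Your route is more elementary. It bypasses Theorem \ref{thm: expression Pontryagin form} and the appendix entirely. It also applies verbatim to any $O(V,g)$-invariant polynomial of even degree, but not to the merely $SO$-invariant Pfaffian, consistent with the Euler form not vanishing. The paper's route buys consistency with its higher-curvature-operator framework.

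Three small remarks:
\begin{itemize}
\item The detour through the trace forms $\tr(\Omega^{\wedge 2m})$ is unnecessary, since $\det(\id+t\Omega)$ is itself conjugation invariant.
\item The one-line reason in Step 3 that $g\owedge P$ ``contributes nothing'' is true but silently uses the first Bianchi identity; citing Theorem \ref{thm: Pontryagin forms equal for weyl curvature} suffices.
\item In your index-counting aside, an odd total number of $1$'s only directly kills terms in the PE case. For PM you also need that each term has an even number ($2k$) of curvature factors, which is the index-level form of the $(\pm1)^{2k}$ cancellation.
\end{itemize}
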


	Our proof of Theorem \ref{thm: main result} is purely algebraic in the sense that it only depends on the algebraic symmetries of the Weyl curvature tensor and the algebraic restrictions that being PE or PM yield for the Weyl curvature tensor. Therefore, we prove Theorem \ref{thm: main result} generally for all algebraic curvature tensors that satisfy a PE- or PM-like algebraic condition. We refer to Section \ref{sect: main result} for the general set-up. The main idea of the proof is to show that if an algebraic curvature tensor $C$ on a $4k$-dimensional scalar product space $(V, g)$ is even or odd under the action of a linear isometry of $g$, then the maps induced by the isometry on the exterior powers of the dual space $V^*$ fix the Pontryagin forms of the algebraic curvature tensor. Consequently, also products of such Pontryagin forms are fixed by these maps. The result then follows from the observation that if the determinant of such isometry is $-1$, then the only element in $\wedge^{4k}V^*$ that is fixed is 0.
	
	Using that the Weyl curvature tensor of a 4-dimensional Lorentzian manifold of Petrov type $I$ with real or purely imaginary eigenvalues is PE or PM \cite[p.\ 1556]{McIntosh1994PEPMWeyl}, we can extend the results of Avez, Zund, Porter and Thompson and also add Petrov type $I$ with real or imaginary eigenvalues to the list of Petrov (sub)types that guarantee the vanishing of the first Pontryagin class $p_1(M)$ of the manifold $M$.
	
	\begin{thm}[Theorem \ref{thm: vanishing of Pontryagin form for Petrov types}]
		\label{thm: result on Petrov types}
		Let $W$ be the Weyl curvature tensor of a 4-dimensional Lorentzian manifold $(M, g)$. If at all points $p\in M$ the Weyl curvature tensor $W$ has Petrov type $I$ with real or purely imaginary eigenvalues, then $p_1(M) = 0$.
	\end{thm}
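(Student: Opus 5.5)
We reduce Theorem \ref{thm: result on Petrov types} to Theorem \ref{thm: main result} with $k=1$, together with the fact that on a 4-manifold the first Pontryagin form computed from the Riemann tensor agrees with the one computed from the Weyl tensor.

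\emph{Step 1 (pointwise PE/PM).} Fix $p\in M$. We invoke the algebraic classification cited in the introduction \cite[p.\ 1556]{McIntosh1994PEPMWeyl}: if $W_p$ has Petrov type $I$ with real (resp.\ purely imaginary) eigenvalues, then there is a unit timelike vector $T\in T_pM$ with respect to which $W_p$ is PE (resp.\ PM). This is the canonical frame adapted to the type $I$ eigenbivectors. By the characterization of Hervik, Ortaggio and Wylleman, $W_p$ is then even (resp.\ odd) under the reflection $\rho_T$ in the hyperplane $T^\perp$. Since $\det\rho_T=-1$, the reflection is orientation reversing. The hypothesis of Theorem \ref{thm: main result} therefore holds at every point. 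The theorem is stated pointwise, so no continuity of $T$ in $p$ is required. This matters, because the eigenvalue character may change from point to point and $T$ need not extend smoothly.

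\emph{Step 2 (apply the main result).} With $k=1$ the only multi-index is $\alpha=(1)$, so Theorem \ref{thm: main result} gives $\varpi_1(W)=0\in\Omega^4(M)$. The underlying algebraic argument is as follows. Pulling back by $\rho_T$ sends $C$ to $\pm C$. The form $\varpi_1(C)$ is quadratic in $C$, namely proportional to $\tr(\Omega\wedge\Omega)$, so the sign cancels and $\rho_T^*\varpi_1=\varpi_1$. On the other hand, $\rho_T^*$ acts on $\wedge^4V^*$ by $\det\rho_T=-1$. Hence $\varpi_1=0$.

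\emph{Step 3 (from Weyl to Riemann).} The class $p_1(M)$ is represented by $\varpi_1(R)$, not by $\varpi_1(W)$, so we must show these forms coincide. Write $R=W+P\owedge g$, where $P$ is the Schouten tensor. The curvature 2-form then splits as $\Omega=\Omega^W+\Omega^P$ with
\[
(\Omega^P)^a{}_b=P^a\wedge\theta_b+\theta^a\wedge P_b, \qquad P^a:=P^a{}_c\,\theta^c,
\]
where $\theta^a$ is the coframe. We need $\tr(\Omega\wedge\Omega)=\tr(\Omega^W\wedge\Omega^W)$, which requires checking two vanishing statements.

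\emph{Cross terms.} $\tr(\Omega^W\wedge\Omega^P)$ contracts $W$ against $g$, and this vanishes by tracelessness of $W$ together with the first Bianchi identity.

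\emph{Pure Schouten term.} $\tr(\Omega^P\wedge\Omega^P)$ reduces to expressions such as $P_a\wedge\theta^a\wedge P_b\wedge\theta^b$. Each factor $P_a\wedge\theta^a=P_{ac}\,\theta^c\wedge\theta^a$ vanishes because $P$ is symmetric.

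This is Avez's classical observation that $\varpi_1$ is conformally invariant. It yields $\varpi_1(M)=\varpi_1(W)=0$, and therefore $p_1(M)=[\varpi_1(M)]=0$.

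The main obstacle is Step 1. We must ensure the cited classification really produces, at every point of type $I$ with real or imaginary eigenvalues, a non-null $T$ making $W_p$ even or odd under $\rho_T$; degenerate eigenvalue coincidences would need a separate check. We must also match the paper's PE/PM definition, which uses reflections, to the classical definition via $E$ and $B$. Step 3 is a routine sign-and-index computation.
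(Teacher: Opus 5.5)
Your proposal is correct and follows the paper's route: pointwise, type $I$ with real (resp.\ purely imaginary) eigenvalues makes $W$ PE (resp.\ PM) by the same cited classification, Theorem \ref{thm: vanishing Pontryagin forms vector spaces} then gives $\varpi_1(W)=0$, and $\varpi_1(Rm)=\varpi_1(W)$ yields $p_1(M)=0$; your worry about eigenvalue coincidences is moot, since type $I$ means three distinct eigenvalues. The only difference is that you verify $\varpi_1(Rm)=\varpi_1(W)$ by hand for $k=1$ instead of citing Theorem \ref{thm: Pontryagin forms equal for weyl curvature} (Avez, Greub, Bivens), and in that check the cross term $\tr(\Omega^W\wedge\Omega^P)$ vanishes by the first Bianchi identity $\Omega^a{}_b\wedge\theta^b=0$ alone, so tracelessness of $W$ is not needed.
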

	
	Finally, we discuss an application of Theorem \ref{thm: main result} to the existence of foliations by nondegenerate umbilic hypersurfaces. Recall that a nondegenerate hypersurface $\Sigma$ of a pseudo-Riemannian manifold $(M, g)$ is \emph{umbilic} if the scalar second fundamental form $h$ of the embedding $\Sigma \hookrightarrow M$ is of the form $h = f\sigma$, where $f\in C^{\infty}(\Sigma)$ and $\sigma$ is the induced metric on $\Sigma$. Umbilic hypersurfaces are studied both in the setting of Riemannian geometry, see e.g.\ \cite{Cairns1990Totallyumbilicfoliations, Gomes2008Umbilicalfoliations, Langevin2008Conformalgeometryfoliations} for just a few examples, and in the setting of Lorentzian geometry, where umbilic \emph{spacelike} hypersurfaces may appear as time slices of spacetimes \cite{Avalos2025energypositivityspacetimesexpanding, Brasil2025spacelikefoliations,Ferrando1992inhomogeneousspaces}.
	At such a nondegenerate umbilic hypersurface, the Weyl curvature tensor is even with respect to the orthogonal reflection along $T\Sigma$ \cite[\S 2]{Sharma2023conformalflatness}. This allows us to apply Theorem \ref{thm: main result}.
	
	\begin{thm}[Theorem \ref{thm: products of Pontryagin classes for umbilic foliations}]
		\label{thm: result on umbilic hypersurfaces}
		Let $(M, g)$ be $4k$-dimensional pseudo-Riemannian manifold. Let $\Sigma \hookrightarrow M$ be a nondegenerate umbilic hypersurface. Then for all multi-indices $\alpha = (\alpha_1, \ldots, \alpha_l)  \in \Z_{> 0}^l$ with $\alpha_1+\cdots+\alpha_l = k$, the $4k$-forms $\varpi_{\alpha}(W)$ vanish at $\Sigma \subset M$.
		
		In particular, if $(M, g)$ is foliated by such hypersurfaces, then for all such multi-indices $\alpha$, the $4k$-form $\varpi_{\alpha}(W)$ vanishes and therefore $p_{\alpha}(M) = 0$.
	\end{thm}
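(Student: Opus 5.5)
The plan is to reduce Theorem \ref{thm: result on umbilic hypersurfaces} to the pointwise algebraic result behind Theorem \ref{thm: main result}, namely Theorem \ref{thm: vanishing Pontryagin forms mfds}. That result gives the vanishing of $\varpi_\alpha(C)$ at a point $p$ as soon as the algebraic curvature tensor $C$ on $T_pM$ is even or odd under an orientation-reversing linear isometry of $(T_pM, g_p)$.

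Fix $p \in \Sigma$. Since $\Sigma$ is nondegenerate, there is a unit normal $N$ at $p$ with $g(N,N) = \pm 1$, and $T_pM = T_p\Sigma \oplus \R N$ orthogonally. Define the reflection $\rho := \id - 2 g(N,N) g(N,\cdot)\, N$ along $T_p\Sigma$. This is a linear isometry with $\det \rho = -1$. The key input is that the Weyl tensor at an umbilic point satisfies $\rho^* W_p = W_p$. The paper cites this from \cite[\S 2]{Sharma2023conformalflatness}, but I would verify it directly as follows.

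Evenness under $\rho$ means exactly that the components of $W$ with an odd number of $N$ slots vanish. By the antisymmetries, only components with one $N$ slot can be nonzero among these, namely $W(X,Y,Z,N)$ with $X,Y,Z \in T_p\Sigma$. By the Codazzi equation, $R(X,Y,Z,N)$ is, up to sign, $(\nabla_X h)(Y,Z) - (\nabla_Y h)(X,Z)$. With $h = f\sigma$ this equals $X(f)\sigma(Y,Z) - Y(f)\sigma(X,Z)$.

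Next I would compute the Ricci term with one normal slot, $\Ric(Y,N) = \sum_i \epsilon_i R(e_i, Y, N, e_i)$. Since $R(N,Y,N,N) = 0$, only tangential $e_i$ contribute, and the Codazzi expression gives $\Ric(Y,N) = c\,(n-2)\,Y(f)$ for a sign $c$ and $n = \dim M$. The Kulkarni--Nomizu correction in $W = R - \frac{1}{n-2}(\mathrm{P}\owedge g)$, evaluated on $(X,Y,Z,N)$, only involves $\Ric(\cdot,N)$ paired with the tangential metric, since $g(X,N) = 0$. It therefore produces exactly $X(f)\sigma(Y,Z) - Y(f)\sigma(X,Z)$ with a matching coefficient, so $W(X,Y,Z,N) = 0$. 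I expect fixing these signs and constants to be the only real computation, and hence the main obstacle; I would first check the Riemannian case $f$ constant as a sanity check.

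Given evenness with respect to the orientation-reversing isometry $\rho$, the pointwise algebraic theorem gives $\varpi_\alpha(W)_p = 0$. The mechanism is that $\rho$ fixes each $\varpi_j(W_p)$, hence fixes their wedge product in $\wedge^{4k}T_p^*M$. On the other hand, $\rho$ acts on that line by $\det \rho = -1$, so the product is zero.

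For the foliation statement, every point of $M$ lies on some leaf, so $\varpi_\alpha(W)$ vanishes identically on $M$. The remaining step is to identify $p_\alpha(M)$ with $[\varpi_\alpha(W)]$. For this I would use that the Pontryagin forms built from $W$ agree with those built from $R$, which should be shown in Section \ref{sect: Pontryagin forms on pseudo-Riemannian manifolds}; the underlying reason is that the $\mathrm{P}\owedge g$ part contributes trace-free pieces that drop out of the invariant polynomials. With that identification, $p_\alpha(M) = [\varpi_\alpha(W)] = 0$.
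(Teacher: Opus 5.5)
Your argument is correct, but you establish the key geometric input (that $W_p$ is even under the reflection across $T_p\Sigma$) by a different route than the paper. You compute directly. Codazzi with $h=f\sigma$ gives $Rm(X,Y,Z,N)=c\,\bigl(X(f)\sigma(Y,Z)-Y(f)\sigma(X,Z)\bigr)$, and tracing gives $\Ric(Y,N)=c(n-2)Y(f)$. Because $g(X,N)=g(Y,N)=0$, the correction term $(P\owedge g)(X,Y,Z,N)=P(X,N)\sigma(Y,Z)-P(Y,N)\sigma(X,Z)$ reproduces exactly this expression, so $W(X,Y,Z,N)=0$. The constants do match. With the paper's normalization the Schouten tensor already carries the factor $\frac{1}{n-2}$, so $W=Rm-P\owedge g$ and $P(X,N)=c\,X(f)$; there is no extra $\frac{1}{n-2}$ in front of $P\owedge g$.

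The paper avoids any trace computation. Lemma \ref{lem: umbilic is conformally totally geodesic} rescales $g$ conformally so that $\Sigma$ becomes totally geodesic, and Codazzi then makes $\widetilde{Rm}$ itself $\theta$-even along $\Sigma$. The $O(V,g)$-equivariance of the Weyl projection (Lemma \ref{lem: curvature tensor even/odd implies Weyl curvature tensor is}) passes this to $\widetilde{W}$, and conformal invariance of the Weyl tensor transfers it back to $W$.

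Your route is shorter and self-contained: it needs no tubular-neighbourhood construction of $\phi$ and no conformal invariance. It also shows explicitly that the odd part of $Rm$ along $\Sigma$ lies entirely in $P\owedge g$. The paper's route is more conceptual and recycles its equivariance lemma, trading the computation for the fact that umbilicity can be conformally rescaled to total geodesy.

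The rest of your argument coincides with the paper's, with two small remarks. First, for vanishing along a single hypersurface the result to cite is the vector-space version, Theorem \ref{thm: vanishing Pontryagin forms vector spaces}, applied at each $p\in\Sigma$. Theorem \ref{thm: vanishing Pontryagin forms mfds} needs $\theta$ on all of $TM$. What you actually describe is the pointwise statement, so this is only a citation slip. Second, your heuristic that the $P\owedge g$ part ``contributes trace-free pieces'' is not really why Theorem \ref{thm: Pontryagin forms equal for weyl curvature} holds. Since you only invoke that theorem, via Corollary \ref{cor: Pontryagin class manifold}, to identify $p_\alpha(M)=[\varpi_\alpha(W)]$, nothing in your proof depends on the heuristic.
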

	
	When formulated contrapositively, Theorems \ref{thm: main result}, \ref{thm: result on Petrov types} and \ref{thm: result on umbilic hypersurfaces} pose obstructions to the existence of pseudo-Riemannian metrics with globally PE or PM Weyl curvature tensors, Lorentzian metrics with these Petrov subtypes and pseudo-Riemannian metrics for which $M$ is foliated by nondegenerate umbilic hypersurfaces, respectively.
	
	\subsection*{Outline}
	The paper is structured as follows. In Section \ref{sect: preliminaries}, we briefly recall the notion of an algebraic curvature tensor on a vector space, how it defines a curvature operator on bivectors and we introduce Thorpe's higher curvature operators acting on the higher exterior powers of the vector space. We also review how the decomposition of the Riemann curvature tensor into its Weyl, Ricci and scalar components carries over to algebraic curvature tensors. We will conclude Section \ref{sect: preliminaries} with recalling the construction of the Pontryagin forms and classes of a manifold, which motivates defining the Pontryagin forms of an algebraic curvature tensor. We present two important properties of the Pontryagin forms: the first being that we can express the Pontryagin forms succinctly using the higher curvature operators (Theorem \ref{thm: expression Pontryagin form}) and the second being that the Pontryagin forms of an algebraic curvature tensor $C$ are equal to the Pontryagin forms of its Weyl curvature tensor $W_C$ (Theorem \ref{thm: Pontryagin forms equal for weyl curvature}). The former is mentioned a few times in the literature, but without proof or specifically for the Riemannian setting. Therefore, we provide a proof of the statement in Appendix \ref{app: proof expression Pontryagin forms}. In Section \ref{sect: PE and PM general}, we will first discuss the approach to PE and PM curvature tensors by Hervik et al., characterizing PE and PM curvature tensors as the curvature tensors that are even or odd under the action of a reflection in the hyperplane orthogonal to a timelike unit vector. In Section \ref{sect: main result}, we prove our main result Theorem \ref{thm: main result} and give an example of a manifold that admits Lorentzian metrics but none with a PE or PM Weyl curvature tensor. We conclude the paper by discussing resulting obstructions for the existence of some Petrov subtypes (Theorem \ref{thm: result on Petrov types}) and foliations by nondegenerate umbilic hypersurfaces (Theorem \ref{thm: result on umbilic hypersurfaces}).
	
	%%%%%%%% SECTIONS %%%%%%%%
	\section{Preliminaries}
	\label{sect: preliminaries}
	In this section, we first recall the notion of an algebraic curvature tensor on a vector space and the associated (higher) curvature operators acting on the even exterior powers of that vector space. In Section \ref{sect: decomposition of algebraic curvature tensors}, we review the familiar splitting of the Riemann curvature tensor into its Weyl, Ricci and scalar components in the setting of algebraic curvature tensors. Finally, in Section \ref{sect: Pontryagin forms on pseudo-Riemannian manifolds}, we conclude by introducing the Pontryagin forms and classes of a manifold.
	
	Unless specified otherwise, we follow the conventions of Lee \cite{Lee18Riemannmfds}. Our manifolds are second countable, Hausdorff and $C^{\infty}$. Let $M$ be an $n$-dimensional manifold and let $g$ be a pseudo-Riemannian metric on $M$ with signature $(p, q)$, where $p$ denotes the number of timelike directions.
	
	\subsection{Algebraic curvature tensors}
	\label{sect: algebraic curvature tensors}
	Let $\nabla$ be the Levi-Civita connection of $g$. We use the convention of Lee \cite{Lee18Riemannmfds} and define the \emph{Riemann (curvature) tensor} for all $U, V, X \in \fX(M)$ by
	\begin{equation}
		\label{eq: (1, 3)-curvature}
		R(U, V)X = [\nabla_U, \nabla_V]X-\nabla_{[U, V]}X,
	\end{equation}
	which is $C^{\infty}(M)$-linear in $U$, $V$ and $X$ and therefore defines a $(1,3)$-tensor field on $M$. Alternatively, we can use the metric $g$ to express $R$ as a $(0,4)$-tensor field $Rm$ on $M$ defined for all $U, V, X, Y \in \fX(M)$ by
	\begin{equation}
		\label{eq: (0, 4)-curvature}
		Rm(U, V, X, Y) = g(R(U, V)X, Y),
	\end{equation}
	which we also refer to as the Riemann (curvature) tensor. Based on the symmetries of the Riemann tensor, we can introduce the algebraic curvature tensors as the $(0, 4)$-tensors that satisfy the same algebraic symmetries.
	
	\begin{defn}
		\label{algebraic curvature tensor (fields)}
		 Let $V$ to be a finite-dimensional vector space over $\R$. An \emph{algebraic curvature tensor} on $V$ is a $(0,4)$-tensor $C$ on $V$ that for all $u, v, x, y \in V$ satisfies the symmetries:
		\begin{enumerate}
			\item $C(u, v, x, y) = -C(v, u, x, y) = -C(u, v, y, x)$,
			\item $C(u, v, x, y) = C(x, y, u, v)$ and
			\item $C(u, v, x, y) + C(x, u, v, y) + C(v, x, u, y) = 0$.
		\end{enumerate}
		We denote the vector space of algebraic curvature tensors on $V$ by $\cC(V)$. An \emph{algebraic curvature tensor field} on a manifold $M$ is a $(0,4)$-tensor field $C$ on $M$ such that $C_p$ is an algebraic curvature tensor on $T_pM$ for all $p\in M$.
	\end{defn}
	
	We recall the associated curvature operator of an algebraic curvature tensor and its higher analogues. Let $(V, g)$ be a scalar product space and let $C \in \cC(V)$ be an algebraic curvature tensor. Recall that the scalar product $g$ on $V$ induces a nondegenerate scalar product $\langle\cdot,\cdot\rangle_g$ on $\wedge^kV$ that is uniquely defined by 
	\begin{equation}
		\label{eq: induced inner product on exterior algebra}
		\langle v_1\wedge\cdots\wedge v_k, w_1\wedge\cdots\wedge w_k\rangle_g = \det(\{g(v_i, w_j)\}_{1\leq i, j\leq k})
	\end{equation}
	for all $v_i, w_j \in V$ and extended bilinearly to all of $\wedge^k V$ \cite[Ch.\ 4.8, 5.4]{Greub1978multilinalg}.
	
	\begin{defn}
		\label{def: curvature operator}
		The \emph{curvature operator} associated with the algebraic curvature tensor $C$ is the unique linear operator $\hat{C}$ on $\wedge^2V$ that is defined by
		\[
		\langle \hat{C}(u\wedge v), x\wedge y\rangle_g = C(u, v, x, y),
		\]
		for all $u, v, x, y \in V$.
	\end{defn}
	
	\begin{rem}
		\label{rem: different sign convention}
		In Definition \ref{def: curvature operator}, we use the opposite sign as in Lee \cite[p.\ 262]{Lee18Riemannmfds}. This is chosen so to stay more consistent with sign conventions of other references in later parts of this paper.
	\end{rem}
	
	The curvature operator has higher analogues, which are linear operators on the vector spaces $\wedge^{2k}V$ and were introduced by Thorpe in \cite{Thorpe1964sectionalcurvatures}. These are constructed by extending the curvature operator $\hat{C}$ to the vector spaces $\wedge^{2k}V$ by taking powers under the product of the mixed exterior algebra \cite[Ch.\ 6]{Greub1978multilinalg}. For simplicity, we do not wish to introduce all notation regarding the mixed exterior algebra and follow the construction of the higher curvature operators by Bivens \cite{Bivens1982curvatureandcharclasses}.
	
	\begin{defn}
		\label{def: multiplication of exterior endomorphisms}
		Let $A \in \End(\wedge^k V)$ and $B \in \End(\wedge^l V)$. We define $A*B\in \End(\wedge^{k+l} V)$ as the unique linear operator satisfying
		\[
		(A*B)(x_1\wedge\cdots\wedge x_{k+l}) = \frac{1}{k!l!}\sum_{\sigma \in S_{k+l}}\sgn(\sigma)A(x_{\sigma(1)}\wedge\cdots\wedge x_{\sigma(k)})\wedge B(x_{\sigma(k+1)}\wedge\cdots\wedge x_{\sigma(k+l)}),
		\]
		for all $x_1, \ldots, x_{k+l} \in V$. Here, $S_{k+l}$ denotes the permutation group on $k+l$ elements and $\sgn(\sigma)$ denotes the sign of the permutation $\sigma$.
	\end{defn}
	
	For simplicity, let $[n]=\{1, \ldots, n\}$. If $I =(i_1, \ldots, i_k)\in [n]^k$ is a \emph{multi-index} of elements in $[n]$ of \emph{length} $k$ and $e = (e_1, \ldots, e_n)$ is a basis for the vector space $V$, then we denote by $e_I = e_{i_1}\wedge\cdots\wedge e_{i_k}$ the iterated wedge-product. Recall that $e_I$ vanishes if $I$ has a repeated index. Note that the $k$-vectors $e_I$ with increasing multi-index $I$ of length $k$ form a basis of $\wedge^kV$ and therefore the collection of all $k$-vectors of the form $e_I$ form a spanning set of $\wedge^kV$. Expanding a $k$-vector over all such $e_I$ has the advantage that we do not need to control the ordering of the multi-indices $I$, at the cost of working with a non-unique expansion.
	
	Note that the following result is immediate after identifying $*$ as the multiplication on the diagonal subalgebra of the mixed exterior algebra of $V$ \cite[Ch.\ 6.2]{Greub1978multilinalg}.
	
	\begin{lem}
		\label{lem: properties *}
		Let $V$ be an $n$-dimensional real vector space and let $0\leq k, l \leq n$ be arbitrary. Then the operation 
		\[
		*\colon \End(\wedge^kV)\times \End(\wedge^lV)\rightarrow \End(\wedge^{k+l}V)
		\]
		is associative, commutative and $\R$-bilinear.
		
		If $e = (e_1, \ldots, e_n)$ is a basis of $V$ and $A \in \End(\wedge^k V)$ and $B \in \End(\wedge^l V)$ are expanded as
		\[
		A = \sum_{I \in [n]^k}\alpha^I\otimes e_I \qquad\text{and}\qquad B = \sum_{J\in [n]^l}\beta^J\otimes e_J
		\]
		with $\alpha^I \in \wedge^kV^*$ and $\beta^J \in \wedge^lV^*$, then 
		\begin{equation}
			\label{eq: expansion * product}
			A*B = \sum_{\substack{I \in [n]^k\\ J\in [n]^l}}(\alpha^I\wedge \beta^J)\otimes (e_I\wedge e_J).
		\end{equation}
		
		Moreover, for every $\theta \in \End(V)$:
		\begin{align}
		\label{eq: compatibility * back}
			(A*B)\circ \wedge^{k+l}\theta &= (A\circ \wedge^k\theta)*(B\circ \wedge^l\theta), \\
		\label{eq: compatibility * front}
			\wedge^{k+l}\theta \circ (A*B)  &= (\wedge^k\theta \circ A)*(\wedge^l\theta \circ B).
		\end{align}
	\end{lem}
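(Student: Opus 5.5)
The plan is to verify the expansion formula \eqref{eq: expansion * product} first, directly from Definition \ref{def: multiplication of exterior endomorphisms}, and then deduce bilinearity, commutativity, associativity and the two compatibility identities from it. Here the tensor $\alpha^I\otimes e_I$ denotes the endomorphism $\omega\mapsto \alpha^I(\omega)\,e_I$. Bilinearity is immediate from the definition, since the defining expression is bilinear in $(A,B)$. To check that $A*B$ is well defined on $\wedge^{k+l}V$, we note that the right-hand side of the definition is multilinear and alternating in $(x_1,\dots,x_{k+l})$. Indeed, replacing $\sigma$ by $\sigma\circ\tau$ for a transposition $\tau$ flips the sign, so by the universal property it descends to a linear map.

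\textbf{Expansion formula.} Substituting $A=\sum_I\alpha^I\otimes e_I$ and $B=\sum_J\beta^J\otimes e_J$ into the definition gives
\[
(A*B)(x_1\wedge\cdots\wedge x_{k+l})=\sum_{I,J}\Big(\frac{1}{k!\,l!}\sum_{\sigma}\sgn(\sigma)\,\alpha^I(x_{\sigma(1)},\dots,x_{\sigma(k)})\,\beta^J(x_{\sigma(k+1)},\dots)\Big)\, e_I\wedge e_J .
\]
Here the $k$-forms are evaluated as alternating multilinear maps. The inner bracket is exactly $(\alpha^I\wedge\beta^J)(x_1,\dots,x_{k+l})$ under the convention for the wedge product in Lee, namely $\frac{(k+l)!}{k!l!}\mathrm{Alt}$. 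It remains to pair forms and multivectors consistently with the determinant pairing. This normalisation check is the only delicate point. I would fix the convention that $\alpha(x_1\wedge\cdots\wedge x_k)=\alpha(x_1,\dots,x_k)$ and confirm it on basis elements with $I,J$ disjoint increasing indices, where both sides reduce to a single nonzero term.

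\textbf{Commutativity and associativity.} Using the expansion, commutativity follows from two sign rules: $\alpha^I\wedge\beta^J=(-1)^{kl}\beta^J\wedge\alpha^I$ and $e_I\wedge e_J=(-1)^{kl}e_J\wedge e_I$. The two signs cancel. Associativity follows by expanding both $(A*B)*D$ and $A*(B*D)$ via the formula, which gives $\sum(\alpha^I\wedge\beta^J\wedge\delta^K)\otimes(e_I\wedge e_J\wedge e_K)$ in both cases by associativity of $\wedge$. Alternatively, all three properties follow at once from identifying $*$ with multiplication in the diagonal subalgebra of $\wedge V^*\otimes\wedge V$, the tensor product of two graded-commutative algebras, as the paper notes.

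\textbf{Compatibility identities.} For \eqref{eq: compatibility * back}, observe that precomposing $\alpha\otimes e$ with $\wedge^k\theta$ gives $(\wedge^k\theta)^*\alpha\otimes e$. Pullback is an algebra homomorphism: $(\wedge^{k+l}\theta)^*(\alpha\wedge\beta)=(\wedge^k\theta)^*\alpha\wedge(\wedge^l\theta)^*\beta$. Combining this with the expansion formula gives the identity. For \eqref{eq: compatibility * front}, we argue similarly using $\wedge^{k+l}\theta(e_I\wedge e_J)=\wedge^k\theta(e_I)\wedge\wedge^l\theta(e_J)$. Alternatively, both identities can be read off from the definition by substituting $x_i\mapsto\theta x_i$, or by applying $\wedge\theta$ termwise. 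The main obstacle is really just keeping the factorial and pairing conventions straight in the expansion step; everything else is formal.
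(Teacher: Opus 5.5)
Your proposal is correct and follows essentially the same route as the paper. You derive the expansion formula by writing out the definition, deduce commutativity and associativity from it via the sign rules and associativity of $\wedge$, and obtain both compatibility identities from the fact that $\theta$ (acting by pullback on $\wedge V^*$, resp.\ by $\wedge\theta$ on $\wedge V$) respects wedge products; your checks of well-definedness and of the normalisation ($\frac{(k+l)!}{k!\,l!}\mathrm{Alt}$ with the determinant pairing) supply details the paper leaves implicit.
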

	\begin{proof}
		$\R$-bilinearity of $*$ is clear from $\R$-bilinearity of the wedge product. Equation \eqref{eq: expansion * product} follows directly by writing out $A*B$ and using the expansion of $A$ and $B$. Associativity and commutativity of $*$ are then easily deduced from \eqref{eq: expansion * product}. Denote by $\theta^* \in \End(V^*)$ the induced dual map. Then using \eqref{eq: expansion * product}, we see that
		\begin{align*}
			(A*B)\circ \wedge^{k+l}\theta &= \sum_{\substack{I \in [n]^k\\ J\in [n]^l}}\wedge^{k+l}\theta^*(\alpha^I\wedge \beta^J)\otimes (e_I\wedge e_J) \\
			&=\sum_{\substack{I \in [n]^k\\ J\in [n]^l}}(\wedge^{k}\theta^*(\alpha^I)\wedge(\wedge^{l}\theta^*(\beta^J)))\otimes (e_I\wedge e_J) = (A\circ \wedge^k\theta)*(B\circ \wedge^l\theta),
		\end{align*}
		which proves \eqref{eq: compatibility * back}. Equation \eqref{eq: compatibility * front} follows from a similar computation.
	\end{proof}
		
	\begin{defn}
		\label{def: higher curvature operator}
		Let $C$ be an algebraic curvature tensor on $V$ and denote its curvature operator by $\hat{C}$. Then we define the \emph{$k$th curvature operator} associated to $C$ as
		\[
		\hat{C}_{2k} = \hat{C}^{*k} \in \End(\wedge^{2k}V).
		\]
	\end{defn}
	
	\begin{rem}
		The $k$th curvature operator is labeled with the index $2k$ to keep consistent with conventions in existing literature on higher curvature operators (e.g.\ \cite{Bivens1982curvatureandcharclasses, Thorpe1969RemarksGaussBonnet}).
	\end{rem}
	
	\begin{rem}[Curvature operators on manifolds] 
	\label{rem: curvature operators on manifolds}
	If $C$ is an algebraic curvature tensor field on a manifold $M$, applying Definition \ref{def: curvature operator} pointwise on each $T_pM$ yields a vector bundle endomorphism
	\[
	\hat{C}\colon \wedge^2TM\rightarrow \wedge^2TM.
	\]
	Smoothness of $\hat{C}$ can easily be verified by considering its action on bivector fields, see e.g. \cite[p.\ 262]{Lee18Riemannmfds} (note the difference in sign convention, see Remark \ref{rem: different sign convention}).
	Similarly, applying Definition \ref{def: multiplication of exterior endomorphisms} on each $T_pM$ yields a $C^{\infty}(M)$-bilinear operation 
	\[
	*\colon \End(\wedge^kTM)\times \End(\wedge^lTM)\rightarrow \End(\wedge^{k+l}TM).
	\]
	Therefore, we obtain vector bundle endomorphisms
	\[
	\hat{C}_{2k} = \hat{C}^{*k}\colon \wedge^{2k}TM\rightarrow \wedge^{2k}TM
	\]
	as higher curvature operators.
	\end{rem}
	
	\subsection{Decomposition of algebraic curvature tensors}
	\label{sect: decomposition of algebraic curvature tensors}
	The Riemann curvature tensor $Rm$ of a pseudo-Riemannian manifold $(M, g)$ can be decomposed into simpler curvature objects, namely the Ricci curvature, scalar curvature and Weyl curvature tensor of $(M, g)$ \cite[Ch.\ 7]{Lee18Riemannmfds}. In general, we can apply this construction to decompose any algebraic curvature tensor into its Weyl, Ricci and scalar curvature using a choice of scalar product $g$. 
	
	Again, let $(V, g)$ be an $n$-dimensional scalar product space.	First, recall that $g$ gives rise to traces $\tr^{i, j}_g\colon T^{(0, k+2)}(V)\rightarrow T^{(0, k)}(V)$ by contracting the $i$th and $j$th entry. For example, if $C \in \cC(V)$ is an algebraic curvature tensor and therefore a $(0, 4)$-tensor on $V$, the contraction of its first and fourth entry is given by  
	\begin{equation*}
		\tensor{\tr_g^{1, 4}(C)}{_k_l} = \sum_{i, j}\tensor{g}{^i^j}\tensor{C}{_i_k_l_j},
	\end{equation*}
	where $\tensor{g}{^i^j}$ is the inverse metric, satisfying $\tensor{g}{^i^j}\tensor{g}{_j_k} = \tensor{\delta}{^i_k}$. Denote the symmetric bilinear forms on $V$ by $\Sigma^2(V^*)$ and the trace-free symmetric bilinear forms by $\Sigma_0^2(V^*)$. If $h, k \in \Sigma^2(V^*)$ are two symmetric bilinear forms on $V$, then their \emph{Kulkarni--Nomizu product} is the $(0,4)$-tensor
	\[
	(h\owedge k) (u,v,x,y) = h(u, y)k(v, x) + h(v,x)k(u,y) - h(u,x)k(v, y) - h(v, y)k(u, x),
	\]
	which is an algebraic curvature tensor.
	
	\begin{defn}
		\label{def: curvature components}
		Let $C$ be an algebraic curvature tensor. Then we define its \emph{Ricci curvature} as $\Ric_C = \tr_g^{1,4}(C) \in \Sigma^2(V^*)$ and its \emph{scalar curvature} as $S_C = \tr_g(\Ric_C) \in \R$. We also define its \emph{Schouten tensor} as
		\[
		P_C = \frac{1}{n-2}\left(\Ric_C-\frac{S_C}{2(n-1)}g\right) \in \Sigma^2(V^*),
		\] and its \emph{Weyl curvature tensor} as 
		\[
		W_C = C - P_C\owedge g \in \cC(V).
		\]
	\end{defn}	

	An easy computation shows that the Weyl curvature tensor of any algebraic curvature tensor $C$ is trace-free in the sense that $\tr^{1, 4}_g(W_C) = 0$. Moreover, if $C$ is trace-free already, then $C = W_C$. Therefore, the following definition makes sense.
	
	\begin{defn}
		\label{def: Weyl curvature tensors}
		A \emph{Weyl curvature tensor} $W$ is an algebraic curvature tensor that satisfies $\tr^{1, 4}_g(W) = 0$. We denote the vector space of Weyl curvature tensors by $\cW(V, g) = \cC(V)\cap \ker(\tr^{1, 4}_g)$.
	\end{defn}
	
	\begin{rem}[Curvature objects on manifolds]
		\label{rem: components for tensor fields}
		If instead we have an algebraic curvature tensor field $C$ on a manifold $(M, g)$, then by applying Definition \ref{def: curvature components} pointwise, we obtain smooth symmetric $(0, 2)$-tensors fields $\Ric_C$ and $P_C$, a smooth function $S_C$ and an algebraic curvature tensor field $W_C$, which carry the same names as the corresponding objects in Definition \ref{def: curvature components}.
	\end{rem}
	
	We recall the decomposition of an algebraic curvature tensor into its Schouten and Weyl curvature tensor.
	
	\begin{prop}[{\cite[Prop.\ 7.24]{Lee18Riemannmfds}}]
		\label{prop: curvature decomposition}
		For every algebraic curvature tensor $C$ on a scalar product space $(V, g)$ of dimension $n\geq 3$, the Weyl curvature tensor $W_C$ is a Weyl curvature tensor in the sense of Definition \ref{def: Weyl curvature tensors}, and $C = W_C + P_C\owedge g$ is the orthogonal decomposition of $C$ corresponding to $\cC(V) = \cW(V, g)\oplus \cW(V, g)^{\bot}$.
	\end{prop}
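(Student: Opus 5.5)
The plan is to verify two things separately: that $W_C$ is trace-free, and that $\cC(V)=\cW(V,g)\oplus\cW(V,g)^{\bot}$, with $\cW(V,g)^{\bot}$ equal to the image of the map $h\mapsto h\owedge g$ on $\Sigma^2(V^*)$. Since $C = W_C + P_C\owedge g$ holds by Definition \ref{def: curvature components}, these two facts give the claimed orthogonal decomposition.

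\emph{Trace-freeness.} First I will record the contraction formula for the Kulkarni--Nomizu product with $g$. Contracting the first and fourth slots of $h\owedge g$ with the convention above, the four terms give $(\tr_g h)\,g$, $n\,h$, $-h$ and $-h$ respectively. Hence
\[
\tr^{1,4}_g(h\owedge g) = (n-2)h + (\tr_g h)\,g .
\]
Next, $\tr_g P_C = \frac{1}{n-2}\bigl(S_C - \frac{nS_C}{2(n-1)}\bigr) = \frac{S_C}{2(n-1)}$. Substituting gives $\tr^{1,4}_g(P_C\owedge g) = \Ric_C - \frac{S_C}{2(n-1)}g + \frac{S_C}{2(n-1)}g = \Ric_C$, and therefore $\tr^{1,4}_g(W_C)=0$. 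Since $P_C\owedge g\in\cC(V)$, also $W_C\in\cC(V)$, so $W_C\in\cW(V,g)$.

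\emph{Direct sum.} Let $\mathcal{I}=\{h\owedge g : h\in\Sigma^2(V^*)\}$. The map $h\mapsto (n-2)h+(\tr_g h)g$ acts as multiplication by $n-2$ on $\Sigma^2_0(V^*)$ and by $2(n-1)$ on $\R g$. For $n\ge 3$ it is therefore invertible. Consequently $h\mapsto h\owedge g$ is injective, and $\cW(V,g)\cap\mathcal{I}=0$, because a trace-free $h\owedge g$ forces $h=0$. Together with $C=W_C+P_C\owedge g$ this gives $\cC(V)=\cW(V,g)\oplus\mathcal{I}$.

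\emph{Orthogonality.} Use the scalar product on $(0,4)$-tensors induced by $g$. For $W\in\cW(V,g)$, pairing $W$ with each of the four terms of $h\owedge g$ contracts two slots of $W$ with $g$. Using the symmetries of $W$, each such term equals $\pm\langle \tr^{1,4}_g W, h\rangle$, which vanishes. So $\mathcal{I}\subseteq\cW(V,g)^{\bot}$, where the orthogonal complement is taken inside $\cC(V)$.

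The step I expect to be the main obstacle is upgrading this inclusion to equality $\mathcal{I}=\cW(V,g)^{\bot}$, because $g$ may be indefinite and orthogonal complements need not behave well. My plan is to show that the induced scalar product restricted to $\cC(V)$ is nondegenerate. I would write $\cC(V)$ as the image of a projector on $\otimes^4V^*$ built from permutation actions (the Young symmetrizer for the curvature symmetries). Each permutation is an isometry, and the projector can be chosen self-adjoint, so its image is a nondegenerate subspace. Nondegeneracy then gives $\dim\cW(V,g)^{\bot}=\dim\cC(V)-\dim\cW(V,g)=\dim\mathcal{I}$, which forces $\mathcal{I}=\cW(V,g)^{\bot}$ and completes the proof.
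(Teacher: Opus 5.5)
Your proposal is correct and is essentially the standard argument behind the result the paper cites from Lee; the paper itself gives no proof. It uses the trace identity $\tr^{1,4}_g(h\owedge g)=(n-2)h+(\tr_g h)\,g$, injectivity of $h\mapsto h\owedge g$ for $n\geq 3$, and the pairing $\langle W, h\owedge g\rangle = 4\langle \tr^{1,4}_g W, h\rangle$. Your additional nondegeneracy step on $\cC(V)$ is exactly what is needed to conclude $\mathcal{I}=\cW(V,g)^{\bot}$ when $g$ is indefinite; it is automatic for definite $g$.

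One precision there: the literal Young symmetrizer is not self-adjoint, since taking adjoints swaps its row and column factors. However, $\frac{1}{8}\sum_{h\in H}\sgn(h)\,h-\frac{1}{24}\sum_{\pi\in S_4}\sgn(\pi)\,\pi$ with $H=\langle (12),(34),(13)(24)\rangle$ is a self-adjoint idempotent whose image is exactly $\cC(V)$, which makes your step rigorous.
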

	
	\begin{rem}
		\label{rem: refined curvature decomposition theorem}
		Let
		\[
		O(V, g) := \{T \in \End(V): T^*g = g\}
		\]
		be the Lie group of linear isometries of $g$, which acts naturally on curvature tensors. This gives $\cC(V)$ the structure of an $O(V, g)$-module. The curvature decomposition in Proposition \ref{prop: curvature decomposition} can be refined to take into account the action of the group $O(V, g)$. It follows that the decomposition in Proposition \ref{prop: curvature decomposition} can be extended to
		\[
		\cC(V) = \cW(V, g)\oplus \cW(V, g)^{\bot} = \cW(V, g)\oplus \Sigma^2_0(V^*) \oplus \R,
		\]
		where the latter is an orthogonal decomposition into irreducible $O(V, g)$-modules. Note that this implies that the projections onto the submodules are $O(V, g)$-equivariant. 
		
		In this decomposition, the $\Sigma^2_0(V^*)$-summand represents the trace-free Ricci curvature $\accentset{\circ}{\Ric}_C=\Ric_C - \frac{S_C}{n}g$ of $C$ and the $\R$-summand represents the scalar curvature $S_C$ of $C$. For a proof, see \cite{Sing69curvatureofEinsteinspaces}; for more context, see \cite[Thm.\ 3.1]{BV09geometricrealizations}.
	\end{rem}
	
	\subsection{Pontryagin forms and higher curvature operators on pseudo-Riemannian manifolds}
	\label{sect: Pontryagin forms on pseudo-Riemannian manifolds}
	Pontryagin classes are a set of characteristic classes associated with real vector bundles. They are elements of the (de Rham) cohomology of the base space of the vector bundle and are important tools for distinguishing vector bundles both in algebraic topology and differential geometry. From a differential geometric perspective, characteristic classes have the interesting property that they can be computed using the curvature of an arbitrary connection on the vector bundle, but do not depend on the choice of connection. Consequently, the vanishing of some characteristic classes is sometimes a necessary (but certainly not always a sufficient) obstruction for the existence of other geometric structures on that vector bundle. For more context on characteristic classes, we refer to \cite{Bott1982diffformsinalgtop, Milnor74charclasses, Zhang2001ChernWeil}.
	
	In our main theorem, Theorem \ref{thm: vanishing Pontryagin forms mfds}, we will show that if the Weyl curvature tensor of a pseudo-Riemannian manifold has certain symmetries, then we can ensure the vanishing of specific products of Pontryagin classes. To prove the vanishing of these cohomology classes, we consider a specific differential form representing these cohomology classes and show that these vanish in Theorem \ref{thm: vanishing Pontryagin forms vector spaces}. To make the proof of Theorem \ref{thm: vanishing Pontryagin forms vector spaces} as clear as possible, the choice of representing differential form is important. Writing the representing differential forms in terms of the higher curvature operators is particularly fruitful for our purposes. Therefore, in this section we will first recall the construction of the Pontryagin classes of a vector bundle in terms of the curvature of a connection and then show how these can be represented using higher curvature operators.
	
	Let $M$ be a manifold. Let $E\rightarrow M$ be a real vector bundle over $M$ of rank $r$ and let $\nabla^E$ be a connection on $E$. Let $R^E$ denote the curvature tensor of $\nabla^E$, which is defined similar to \eqref{eq: (1, 3)-curvature}. Choose any local frame $e = (e_1, \ldots, e_r)$ of $E$ on some open subset $U\subseteq M$. The \emph{curvature matrix} $\Omega_U$ of $R^E$ on $U$ is the matrix of local 2-forms $\tensor{\Omega}{_i^j}\in \Omega^2(U)$ defined by 
	\begin{equation}
		\label{eq: 2-forms curvature matrix}
		R^E(X, Y)e_i = \sum_{j\in [r]} \tensor{\Omega}{_i^j}(X, Y)e_j.
	\end{equation}
	Clearly, if we choose a different frame $e'$ on $U$ such that $e_i = \sum_j\tensor{\tau}{_i^j}e'_j$ for some transition matrix $\tau \in GL_r(C^{\infty}(U))$, then standard linear algebra shows that the curvature matrix of $R^E$ on $U$ changes by 
	\[
	\Omega'_U = \tau^{-1}\Omega_U \tau.
	\]
	The Pontryagin classes are defined as the cohomology classes represented by differential forms on $M$ that are locally given as a polynomial combination of the local 2-forms $\tensor{\Omega}{_i^j}\in \Omega^2(U)$. To construct a globally well-defined differential form from these local curvature matrices, that does not depend on the choice of local frame, this polynomial must therefore be invariant under conjugation of the argument. Note that the determinant is such a polynomial. Indeed, the determinant, when viewed as map from $M_r(\R)\rightarrow \R$ is conjugation invariant and can be expressed as a polynomial in the matrix entries. Let $t\in \R$ and let $\sigma^{(r)}_1, \ldots, \sigma^{(r)}_r\colon M_r(\R)\rightarrow \R$ be the polynomials defined by 
	\[
	\det(I + tX) = 1 + \sum_{k \in [r]}t^k\sigma^{(r)}_k(X),
	\]
	for all $X \in M_r(\R)$. The polynomials $\sigma^{(r)}_k$ are homogeneous of degree $k$ and are invariant under conjugation of $X$ with elements of $GL_r(\R)$, since the determinant is. 
	
	We can now construct the Pontryagin classes as follows. If $\Omega_U \in M_r(\Omega^2(U))$ is the curvature matrix with respect to an arbitrary local frame $e$ on $U$, then the differential form $\sigma^{(r)}_k(\Omega_U) \in \Omega^{2k}(U)$ is well-defined since wedge products of differential 2-forms commute, and does not depend on the choice of $e$ by conjugation-invariance of $\sigma^{(r)}_k$. In particular, if $U$ and $V$ are two trivializing neighbourhoods for $E$, then $\sigma^{(r)}_k(\Omega_U)$ and $\sigma^{(r)}_k(\Omega_V)$ agree on $U\cap V$ and thus the differential forms $\sigma^{(r)}_k(\Omega_U)$ glue to a global differential form, denoted by $\sigma^{(r)}_k(\Omega_{\nabla^E}) \in \Omega^{2k}(M)$. It can be shown, that the differential form $\sigma^{(r)}_k(\Omega_{\nabla^E})$ is a de Rham cocycle \cite[p.\ 296]{Milnor74charclasses} and that if $\ol{\nabla}^E$ is a second connection on $E$, which induces the differential form $\sigma^{(r)}_k(\Omega_{\ol{\nabla}^E})$, then the differential form $\sigma^{(r)}_k(\Omega_{\nabla^E}) - \sigma^{(r)}_k(\Omega_{\ol{\nabla}^E})$ is a de Rham coboundary \cite[p.\ 298]{Milnor74charclasses}. This ensures that the following definition does not depend on the choice of connection.
	
	\begin{defn}
		\label{def: Pontryagin class E and M}
		Let $M$ be an $n$-dimensional manifold and let $E\rightarrow M$ be a rank $r$ vector bundle over $M$. Then for all $1\leq k\leq \fl{\frac{r}{2}}$, the \emph{$k$th Pontryagin class of $E$}, denoted by $p_k(E)$, is the class
		\[
		p_k(E) = \frac{1}{(2\pi)^{2k}}[\sigma^{(r)}_{2k}(\Omega_{\nabla^E})] \in H^{4k}_{dR}(M),
		\]
		where $\nabla^E$ is any connection on $E$ and $[-]$ denotes the corresponding de Rham cohomology class. We also define the $k$th Pontryagin class of $M$ as
		\[
		p_k(M) := p_k(TM).
		\]
	\end{defn}
	
	For the rest of this section, let $(V, g)$ be an $n$-dimensional scalar product space and let $C \in \cC(V)$ be an algebraic curvature tensor. Motivated by Definition \ref{def: Pontryagin class E and M}, we consider the following.
	
	\begin{defn}
		\label{def: Pontrayagin form}
		For all $1\leq k\leq \fl{\frac{n}{2}}$, we define the $k$th \emph{Pontryagin form} of $C$ by 
		\[
		\varpi_k(C) = \frac{1}{(2\pi)^{2k}}\sigma^{(n)}_{2k}(\Omega_C) \in \wedge^{4k}V^*,
		\]
		where $\Omega_C$ is the curvature matrix of $C$, when viewed as a $(1, 3)$-tensor, with respect to an arbitrary basis of $V$.
	\end{defn}
	
	\begin{rem}
		Note that the $\varpi_k(C)$ not only depends on $C$, but also on the scalar product $g$ as it is used to convert $C$ to a $(1, 3)$-tensor.
	\end{rem}
	
	As mentioned in the beginning of this section, we will show in Theorem \ref{thm: vanishing Pontryagin forms vector spaces} that certain symmetries of the curvature tensor $C$ ensure that products of specific Pontryagin forms of $C$ vanish. For the presentation of the argument, it is useful to rewrite the $k$th Pontryagin forms of $C$ in terms of the $k$th curvature operator of $C$. This was first done in the Riemannian setting by Stehney \cite[Thm.\ 4.1]{Stehney1973courbure}, based on work by Chern \cite[Thm.\ 2]{Chern1955charclassesRiemannian}. The corresponding result in the pseudo-Riemannian case is mentioned by Greub \cite[\S 4]{Greub1981PontryaginclassesandWeyl} without proof. Therefore, we provide this proof in Appendix \ref{app: proof expression Pontryagin forms}. 
	
	Note that the $k$th curvature operator is an endomorphism on $\wedge^{2k}V$, whereas the $k$th Pontryagin form is a $4k$-form on $V$. Therefore, consider the following $2k$-form constructed from two endomorphisms on $\wedge^k V$.	
	
	\begin{defn}
		\label{def: differential forms induced by exterior endomorphisms}
		Let $A, B \in \End(\wedge^k V)$. Then $F_k(A, B) \in \wedge^{2k}V^*$ is the $2k$-form defined by 
		\begin{multline}
		\label{eq: differential forms induced by exterior endomorphisms}
			F_k(A, B)(x_1\wedge\cdots\wedge x_{2k}) \\
			= \frac{1}{k!^2}\sum_{\sigma \in S_{2k}}\sgn(\sigma)\langle A(x_{\sigma(1)}\wedge\cdots\wedge x_{\sigma(k)}), B(x_{\sigma(k+1)}\wedge\cdots\wedge x_{\sigma(2k)})\rangle_g
		\end{multline}
		for all $x_1, \ldots, x_{2k} \in V$.
	\end{defn}
	
	Using $F_{2k}$, we can express the $k$th Pontryagin form of $C$ in terms of its $k$th curvature operator \cite{Greub1981PontryaginclassesandWeyl, Stehney1973courbure}.
	
	\begin{thm}
		\label{thm: expression Pontryagin form}
		For all $1\leq k\leq \fl{\frac{n}{2}}$,
		\begin{equation}
			\label{eq: Pontrayagin form in kth curvature operator}
			\varpi_k(C)	= \frac{1}{(2\pi)^{2k}k!^2}F_{2k}(\hat{C}^{*k}, \hat{C}^{*k}).
		\end{equation}
	\end{thm}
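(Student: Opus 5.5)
The plan is a direct computation in a fixed orthonormal basis $e=(e_1,\ldots,e_n)$ of $(V,g)$, with $g(e_i,e_j)=\eps_i\delta_{ij}$ and $\eps_i=\pm1$. Both sides of \eqref{eq: Pontrayagin form in kth curvature operator} will be written as the same contraction of $2k$ curvature $2$-forms against a generalized Kronecker delta, and then the constants compared.

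\emph{Step 1: the left-hand side.} Let $\tensor{\Omega}{_i^j}$ be the curvature matrix of $C$ viewed as a $(1,3)$-tensor, as in \eqref{eq: 2-forms curvature matrix}. Explicitly,
\[
\tensor{\Omega}{_i^j}(x,y)=\eps_j C(x,y,e_i,e_j),
\]
so $\tensor{\Omega}{_i^j}$ is antisymmetric up to the signs $\eps$. The $2$-forms commute, so the elementary symmetric polynomial can be expanded as a sum of principal minors:
\[
\sigma^{(n)}_{m}(\Omega)=\frac{1}{m!}\sum_{i_1,\ldots,i_m}\sum_{\tau\in S_m}\sgn(\tau)\,\tensor{\Omega}{_{i_1}^{i_{\tau(1)}}}\wedge\cdots\wedge\tensor{\Omega}{_{i_m}^{i_{\tau(m)}}},
\]
and we take $m=2k$.

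\emph{Step 2: the right-hand side.} Define $2$-forms $\alpha^{ab}\in\wedge^2V^*$ by $\alpha^{ab}(x,y)=\tfrac12\eps_a\eps_b\,C(x,y,e_a,e_b)$, so that
\[
\hat C=\sum_{a,b}\alpha^{ab}\otimes e_a\wedge e_b .
\]
This is checked by pairing with $e_c\wedge e_d$ via \eqref{eq: induced inner product on exterior algebra} and using Definition \ref{def: curvature operator}. Iterating \eqref{eq: expansion * product} from Lemma \ref{lem: properties *} gives
\[
\hat C^{*k}=\sum \bigl(\alpha^{a_1b_1}\wedge\cdots\wedge\alpha^{a_kb_k}\bigr)\otimes e_{a_1}\wedge e_{b_1}\wedge\cdots\wedge e_{a_k}\wedge e_{b_k}.
\]
Next I will prove the auxiliary identity
\[
F_{m}\Bigl(\sum_I\beta^I\otimes e_I,\ \sum_J\gamma^J\otimes e_J\Bigr)=\sum_{I,J}\langle e_I,e_J\rangle_g\,\beta^I\wedge\gamma^J .
\]
This follows by unwinding \eqref{eq: differential forms induced by exterior endomorphisms}: the factor $\frac1{m!^2}$ together with the sum over $S_{2m}$ reproduces exactly the determinant convention for the wedge product of two $m$-forms. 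In an orthonormal basis, $\langle e_I,e_J\rangle_g$ equals $\eps_{i_1}\cdots\eps_{i_{2k}}$ times the generalized Kronecker delta $\delta^{I}_{J}$. Applying the identity with $m=2k$ expresses $F_{2k}(\hat C^{*k},\hat C^{*k})$ as a sum over pairs of ordered $2k$-tuples, weighted by $\delta^I_J$, of wedge products of $2k$ forms $\alpha^{ab}$.

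\emph{Step 3: comparison.} Substituting $\alpha^{ab}=\tfrac12\eps_a\tensor{\Omega}{_b^a}$ (up to a global sign fixed by Remark \ref{rem: different sign convention}), the signs $\eps$ cancel against those coming from $\langle e_I,e_J\rangle_g$. Expanding the generalized delta as a sum over $\tau\in S_{2k}$, each term becomes a product of cycles of $\Omega$'s. One then shows that this sum equals a constant multiple of the principal-minor expansion from Step 1. The key observations are:
\begin{itemize}
\item the sum over $J$ effectively symmetrizes over the $k!$ orderings of the pairs and the $2^k$ swaps inside each pair;
\item the antisymmetry of $\alpha^{ab}$ absorbs the resulting signs.
\end{itemize}
Counting these gives the factor $k!^2$ (together with $2^{2k}$ against the $\tfrac12$'s, and $(2k)!$ against $\frac1{(2k)!}$).

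\emph{Main obstacle.} The difficulty is entirely in the bookkeeping: exact combinatorial constants, the signs $\eps_i$ in indefinite signature, and the sign convention for $\hat C$. Conceptually, the matching term-by-term between the $\tau$-sum in $\delta^I_J$ and the determinant expansion of $\sigma_{2k}$ is the heart of the proof. I would first verify the constant in the case $k=1$, $n=4$ in the Riemannian setting, where both sides are explicitly computable and are known from Chern and Stehney. Only then would I run the general count, organizing it by grouping the $2k$ indices into the $k$ pairs coming from $\hat C^{*k}$.
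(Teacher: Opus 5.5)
Your Steps 1 and 2 match the paper's setup:
\begin{itemize}
\item your $\alpha^{ab}$ are the paper's antisymmetric $2$-forms $\Omega^{ab}$ from \eqref{eq: 2-forms curvature operator};
\item the iterated $*$-product is Lemma \ref{lem: higher curvature operators}, and the minor expansion is Lemma \ref{lem: expression sigma polynomials};
\item your formula for $F_m$, together with $\langle e_I,e_J\rangle_g=\eps_I\sgn(I;J)$, is how the paper evaluates $F_{2k}(\hat{C}^{*k},\hat{C}^{*k})$.
\end{itemize}
The gap is Step 3. It is not bookkeeping; it is the only substantive step. Group both sides by the underlying $2k$-element set $A\subseteq[n]$, and put $\alpha_A=(\alpha^{ab})_{a,b\in A}$, an antisymmetric matrix with commuting $2$-form entries. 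The $A$-summand of $F_{2k}(\hat{C}^{*k},\hat{C}^{*k})$ is $\eps_A\bigl(\sum_{\sigma\in S_A}\sgn(\sigma)\,\alpha^{\sigma(a_1)\sigma(a_2)}\wedge\cdots\wedge\alpha^{\sigma(a_{2k-1})\sigma(a_{2k})}\bigr)^{\wedge 2}=\eps_A(2^kk!)^2\,\mathrm{Pf}(\alpha_A)^2$. Using $\tensor{\Omega}{_a^b}=2\eps_a\alpha^{ab}$, the $A$-summand of $\varpi_k(C)$ is $(2\pi)^{-2k}2^{2k}\eps_A\det(\alpha_A)$. So the theorem reduces to $\det(\alpha_A)=\mathrm{Pf}(\alpha_A)^2$. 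This is exactly what the paper imports as Chern's formula on generalized Pfaffians in \eqref{eq: working equation expressions 2}. Your two key observations (reordering the $k$ pairs, swapping within pairs, antisymmetry absorbing signs) only produce the normalization $2^kk!$ of each Pfaffian factor. They give no reason why the square of a signed sum over perfect matchings equals a signed sum over all permutations, so the factor $k!^2$ you count is not yet justified.

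In particular, the term-by-term matching of the two $\tau$-sums that you call the heart of the proof does not exist. For fixed $I$ and $\tau$, your right-hand term is a product over the union of two perfect matchings, whereas $\prod_a\tensor{\Omega}{_{i_a}^{i_{\tau(a)}}}$ runs along the cycles of $\tau$. For $\tau=\id$ the left term vanishes, while the right one is $\prod_l(\alpha^{i_{2l-1}i_{2l}})^{\wedge 2}$, which is in general nonzero.
\begin{itemize}
\item For $k\geq 3$ the determinant contains nonzero terms from permutations with odd cycles. These cancel only because reversing an odd cycle flips the sign.
\item Each configuration of even cycles arises from $2^c$ permutations and from $2^c$ ordered pairs of matchings, where $c$ is the number of cycles of length $\geq 4$, and the signs still have to be checked.
\end{itemize}
You must either carry out this argument or cite $\det=\mathrm{Pf}^2$. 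It has to be applied to the genuinely antisymmetric $\alpha_A$, not to $(\tensor{\Omega}{_a^b})_{a,b\in A}$, which in indefinite signature is only $\eps$-antisymmetric. With that supplied, your argument is the paper's.

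A minor slip: with your definition $\tensor{\Omega}{_i^j}=\eps_jC(\cdot,\cdot,e_i,e_j)$ one has $\alpha^{ab}=\frac12\eps_a\tensor{\Omega}{_a^b}$. Your $\frac12\eps_a\tensor{\Omega}{_b^a}$ differs by the pair-dependent factor $-\eps_a\eps_b$, which is not a global sign. It is harmless only because every index occurs twice in each degree-$4k$ term.
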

	
	A proof of Theorem \ref{thm: expression Pontryagin form} is given in Appendix \ref{app: proof expression Pontryagin forms}. We conclude this section by recalling the following result, which shows that the Pontryagin forms of an algebraic curvature tensor $C \in \cC(V)$ are equal to the Pontryagin forms of its Weyl curvature tensor $W_C$. This was first shown by Avez \cite{Avez70charclassesandWeyl} in 1970, based on a paper by Chern and Simons \cite{Chern1971cohomclasses}, and later by Greub \cite{Greub1981PontryaginclassesandWeyl} and also by Bivens \cite[Lem.\ 2.1.a]{Bivens1982curvatureandcharclasses}.
	
	\begin{thm}
		\label{thm: Pontryagin forms equal for weyl curvature}
		Let $C \in \cC(V)$ be an algebraic curvature tensor and let $W_C$ be the Weyl curvature tensor of $C$ as in Definition \ref{def: curvature components}. Then for all $1\leq k\leq \fl{\frac{n}{2}}$,
		\begin{equation}
			\label{eq: Pontrayagin form equal to Pontryagin of Weyl}
			\varpi_k(C) = \varpi_k(W_C).
		\end{equation}
	\end{thm}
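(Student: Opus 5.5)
I would prove Theorem \ref{thm: Pontryagin forms equal for weyl curvature} by working directly with the curvature matrix and the invariant polynomials $\sigma^{(n)}_{2k}$, rather than through the higher curvature operators. Write $C = W_C + P_C\owedge g$ (Proposition \ref{prop: curvature decomposition}). Fix a basis $(e_1,\dots,e_n)$ of $V$ with dual basis $(\theta^1,\dots,\theta^n)$, and regard the curvature matrices as $n\times n$ matrices with entries in the commutative algebra $\wedge^{\mathrm{even}}V^*$. The curvature matrix is linear in the tensor, so $\Omega_C = \Omega_{W} + \Omega_{P\owedge g}$. A direct computation from the formula for the Kulkarni--Nomizu product then gives, up to a sign fixed by the conventions,
\[
\tensor{(\Omega_{P\owedge g})}{_i^j} = \pm\bigl(\theta_i\wedge \pi^j + \pi_i\wedge \theta^j\bigr),
\]
where $\theta_i = g_{il}\theta^l$, $\pi_i = P(e_i,\cdot)$ and $\pi^j = g^{jl}\pi_l$. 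In matrix form this reads $\Omega_{P\owedge g} = \pm(\Theta\Pi^{T} + \Pi\Theta^{T})$, with $\Theta,\Pi$ column vectors of $1$-forms, up to index raising by $g$. The key structural fact is that every entry of $\Omega_{P\owedge g}$ is a sum of terms each containing a factor from the span of $\{\theta^l\}$ paired against one from $\{\pi_l\}$, so the correction has ``rank two'' in a graded sense.

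Next I would use the generating function. Since $\wedge^{\mathrm{even}}V^*$ is commutative, the definition gives $\sum_k t^k\sigma_k(X) = \det(I+tX)$. Rather than expanding the determinant, I would pass to power sums: by Newton's identities, $\sigma_{2k}$ is a polynomial in the traces $\tr(X^m)$. For the antisymmetric-in-$g$ matrices arising here, the odd traces vanish. It therefore suffices to show
\[
\tr\bigl((\Omega_W + \Omega_{P\owedge g})^{m}\bigr) = \tr(\Omega_W^{m})
\]
for every $m$, that is, that every mixed word containing at least one factor of $\Omega_{P\owedge g}$ has zero trace.

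The main obstacle is exactly this vanishing of mixed traces. The approach is to write each occurrence of $\Omega_{P\owedge g}$ as $\Theta\Pi^T+\Pi\Theta^T$. A trace of a word then becomes a product of scalar $2$-form-valued ``sandwiches'' of the form $\Pi^T\Omega_W^{a}\Theta$, $\Theta^T\Omega_W^{a}\Theta$ and $\Pi^T\Omega_W^a\Pi$. Three facts should make these vanish or cancel.
\begin{enumerate}
\item The first Bianchi identity (Definition \ref{algebraic curvature tensor (fields)} iii)) for $W$ gives $\Omega_W\Theta = 0$, i.e.\ $\sum_j \tensor{(\Omega_W)}{_i^j}\wedge\theta_j = 0$. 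This kills every sandwich with $\Theta$ adjacent to $\Omega_W^a$ with $a\ge1$.
\item The remaining terms with $a=0$ involve $\Theta^T\Theta = \sum_i \theta_i\wedge\theta^i = 0$ and $\Pi^T\Theta = \sum \pi_i\wedge\theta^i = 0$, the latter because $P$ is symmetric.
\item The trace-freeness of $W$ is what ensures no extra terms survive when the Ricci contraction appears.
\end{enumerate}
Words consisting purely of $\Omega_{P\owedge g}$ vanish by the same two identities in item ii).

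I expect the careful bookkeeping of signs and graded commutations in these cyclic words to be the delicate step. If the direct trace argument becomes unwieldy, the fallback is to prove the first-order variation statement instead: show that $\tfrac{d}{ds}\tr\bigl((\Omega_W+s\,\Omega_{P\owedge g})^m\bigr) = m\,\tr\bigl(\Omega_{P\owedge g}(\Omega_W+s\Omega_{P\owedge g})^{m-1}\bigr)$ vanishes by the same identities, and then integrate in $s$ from $0$ to $1$.
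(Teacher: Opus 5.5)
\textbf{The gap: item iii).} Your strategy works, but item iii) is a wrong step, and the argument needs a different idea in its place. Trace-freeness of $W$ does not kill the sandwiches $\Pi^{T}\Omega_W^{a}\Pi$ with $a\geq 1$. These are $(2a+2)$-forms and are in general nonzero even when $W$ is trace-free. Items i)--ii) say nothing about them either.

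\textbf{The fix: a counting argument.} Rotate a mixed word so that it ends in $Q:=\Omega_{P\owedge g}$, and write it as $\Omega_W^{a_1}Q\,\Omega_W^{a_2}Q\cdots\Omega_W^{a_r}Q$ with $r\geq 1$. Expand each $Q$ as $\pm(\Theta\Pi^{T}+\Pi\Theta^{T})$, with the index raising by $g$ that you indicate. Each resulting term of the trace is then, up to sign, a product $\prod_{s=1}^{r}v_{s-1}^{T}\Omega_W^{a_s}u_s$ (indices mod $r$), where $(u_s,v_s)\in\{(\Theta,\Pi),(\Pi,\Theta)\}$. Every copy of $Q$ contributes exactly one $\Theta$, so the $r$ sandwiches contain $r\geq 1$ copies of $\Theta$ in total. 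Hence at least one sandwich has a $\Theta$ on one side. That sandwich vanishes by i) if $a_s\geq 1$, and by ii) together with $\Theta^{T}\Pi=-\Pi^{T}\Theta=0$ if $a_s=0$. So every mixed word has zero trace, and trace-freeness of $W$ is never used.

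This is as it should be. The corrected argument proves $\varpi_k(A+P\owedge g)=\varpi_k(A)$ for every $A\in\cC(V)$ and every symmetric $P$. Combined with $C=W_C+P_C\owedge g$, that is exactly the theorem.

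The rest of your reduction is fine:
\begin{enumerate}
\item $\Omega_C$ is linear in $C$.
\item The entries lie in the commutative $\R$-algebra $\wedge^{\mathrm{even}}V^*$, so cyclicity of the trace and Newton's identities are available. You do not need odd traces to vanish, only that all traces $\tr(X^m)$, $m\leq 2k$, agree.
\item Your formula for $\Omega_{P\owedge g}$ and the identities $\Omega_W\Theta=0$, $\Theta^{T}\Theta=\Pi^{T}\Theta=0$ are correct.
\end{enumerate}
The variation fallback is unnecessary.

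\textbf{Comparison with the paper.} With this repair, your proof is a genuinely different route. The paper does not prove this theorem itself. It cites Avez (building on Chern--Simons), Greub, and Bivens; Greub and Bivens work in the mixed exterior algebra and higher-curvature-operator formalism. That formalism matches the description $F_{2k}(\hat{C}^{*k},\hat{C}^{*k})$ of Theorem \ref{thm: expression Pontryagin form} that the paper uses for its main vanishing theorem. Your argument is elementary and self-contained. It works directly from the definition of $\varpi_k$ via $\sigma^{(n)}_{2k}(\Omega_C)$, so it does not depend on Appendix \ref{app: proof expression Pontryagin forms}. It also makes transparent that only the first Bianchi identity and the symmetry of $g$ and $P$ are responsible for the result.
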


	On manifolds, one can show that Theorem \ref{thm: Pontryagin forms equal for weyl curvature} implies the following result from Stehney \cite[Thm.\ 4.1]{Stehney1973courbure} and Greub \cite[\S 4]{Greub1981PontryaginclassesandWeyl}.
	
	\begin{cor}
		\label{cor: Pontryagin class manifold}
		Let $(M, g)$ be an $n$-dimensional pseudo-Riemannian manifold and let $Rm$ be its Riemann curvature tensor and $W$ its Weyl curvature tensor. Then for $1\leq k\leq \fl{\frac{n}{2}}$, the $k$th Pontryagin class of $M$ is given by 
		\[
		p_k(M) = \frac{1}{(2\pi)^{2k}k!^2}[F_{2k}(\hat{Rm}^{*k}, \hat{Rm}^{*k})] = \frac{1}{(2\pi)^{2k}k!^2}[F_{2k}(\hat{W}^{*k}, \hat{W}^{*k})].
		\]
	\end{cor}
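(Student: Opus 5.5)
The plan is to reduce the statement about classes to the pointwise algebraic identity in Theorem \ref{thm: expression Pontryagin form}, applied to the Riemann tensor field $Rm$ and to the Weyl tensor field $W$ of $(M,g)$.

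\textbf{Step 1 (the Pontryagin form of $M$).} The Levi-Civita connection is a connection on $TM$. So by Definition \ref{def: Pontryagin class E and M}, $p_k(M)$ is represented by $\frac{1}{(2\pi)^{2k}}\sigma^{(n)}_{2k}(\Omega_{\nabla})$. Here $\Omega_\nabla$ is the curvature matrix of $R$ in a local frame, defined by \eqref{eq: 2-forms curvature matrix}.

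\textbf{Step 2 (matching the algebraic definition).} At each point $p$, the matrix $\Omega_\nabla$ evaluated at $p$ is exactly the curvature matrix $\Omega_{Rm_p}$ of the algebraic curvature tensor $Rm_p$, viewed as a $(1,3)$-tensor on $(T_pM,g_p)$ with respect to the frame at $p$. One must check that the index placement agrees: $\tensor{\Omega}{_i^j}(X,Y)$ is the $e_j$-component of $R(X,Y)e_i$. This is the same convention used implicitly in Definition \ref{def: Pontrayagin form}. Hence, pointwise,
\[
\frac{1}{(2\pi)^{2k}}\sigma^{(n)}_{2k}(\Omega_\nabla)\big|_p=\varpi_k(Rm_p).
\]

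\textbf{Step 3 (higher curvature operators and smoothness).} Theorem \ref{thm: expression Pontryagin form} gives, pointwise,
\[
\varpi_k(Rm_p)=\frac{1}{(2\pi)^{2k}k!^2}F_{2k}(\hat{Rm}_p^{*k},\hat{Rm}_p^{*k}).
\]
By Remark \ref{rem: curvature operators on manifolds}, $\hat{Rm}^{*k}$ is a smooth bundle endomorphism of $\wedge^{2k}TM$. Moreover, $F_{2k}$ is built from $g$ by a universal multilinear formula. Therefore $F_{2k}(\hat{Rm}^{*k},\hat{Rm}^{*k})$ is a smooth $4k$-form, and it equals the representative from Step 1. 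This proves the first equality.

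\textbf{Step 4 (replacing $Rm$ by $W$).} Apply Theorem \ref{thm: Pontryagin forms equal for weyl curvature} pointwise with $C=Rm_p$; by Remark \ref{rem: components for tensor fields}, $W_{Rm_p}=W_p$. This gives $\varpi_k(Rm_p)=\varpi_k(W_p)$. Applying Theorem \ref{thm: expression Pontryagin form} once more, now to $W_p$, yields the second expression as the same form, so in particular the same class. For $n\le 2$ the Weyl decomposition is not defined; in that case the range of $k$ is $k\le 1$ and $4k>n$, so everything vanishes trivially, and in any case the paper's decomposition assumes $n\ge3$.

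\textbf{Main obstacle.} The only genuinely delicate point is Step 2: verifying that the normalization and index conventions of the bundle curvature matrix agree with those used for $\Omega_C$ in Definition \ref{def: Pontrayagin form}, including the sign convention of Remark \ref{rem: different sign convention}. I expect this to cause no trouble, because $\sigma_{2k}$ is even: $\sigma_{2k}(-X)=\sigma_{2k}(X)$ and $\sigma_{2k}(X^T)=\sigma_{2k}(X)$. So sign or transpose discrepancies are harmless.
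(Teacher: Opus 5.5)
Your proposal is correct and follows the route the paper has in mind, which it leaves implicit. You identify the Levi-Civita curvature matrix pointwise with $\Omega_{Rm_p}$, so that $p_k(M)=[\varpi_k(Rm)]$, and then apply Theorem \ref{thm: expression Pontryagin form} and Theorem \ref{thm: Pontryagin forms equal for weyl curvature} pointwise; your remarks on smoothness and on the degenerate case $n\le 2$ are harmless additions.
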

	
	\section{Pontryagin forms of even and odd curvatures tensors}
	In this section, we will first recall the definition of Hervik et al \cite{Hervik2013minimaltensors} of purely electric and magnetic algebraic curvature tensors. Their approach is to consider the parity of an algebraic curvature tensor under the reflection of a timelike unit vector. In Section \ref{sect: main result}, we will show that for a $4k$-dimensional manifold, the products of Pontryagin classes that land $H_{dR}^{4k}(M)$ vanish if the manifold has a PE or PM Weyl curvature tensor (Theorem \ref{thm: vanishing Pontryagin forms mfds}). We actually prove something stronger, namely that under these assumptions, not only these products of Pontryagin classes vanish but in fact the products of the Pontryagin forms as in Definition \ref{def: Pontrayagin form} vanish everywhere on the manifold (Theorem \ref{thm: vanishing Pontryagin forms vector spaces}). It is not necessary to restrict ourselves to considering reflections in a timelike unit vector or to manifolds with a Lorentzian signature. Accordingly, in Definition \ref{def: even/odd curvature tensor} we introduce the more general notion of algebraic curvature tensors being even or odd with respect to an endomorphism of the vector space that preserves the scalar product $g$ and has determinant $-1$. Our main results, Theorems \ref{thm: vanishing Pontryagin forms vector spaces} and \ref{thm: vanishing Pontryagin forms mfds}, can then be proven for any such even or odd algebraic curvature tensor. In Sections \ref{sect: nonexistence Petrov types} and \ref{sect: nonexistence umbilic hypersurfaces}, we will discuss applications of our main results by providing obstructions to the existence of certain Petrov types on 4-dimensional Lorentzian manifolds and nondegenerate umbilic hypersurfaces of pseudo-Riemannian manifolds.	
	
	\subsection{Motivation: purely electric or magnetic curvature tensors for Lorentzian vector spaces}
	\label{sect: PE and PM general}
	The orthogonal splitting of the space of curvature tensors $\cC(V)$ of a Lorentzian vector space $(V, g)$ introduced by Hervik et al \cite{Hervik2013minimaltensors} is based on the reflection in the orthogonal complement of a timelike unit vector $u \in V$. Concretely, given a timelike unit vector $u \in V$, we define the reflection $\theta_u$ by $\theta_u(u) = -u$ and $\restr{\theta_u}{u^{\bot}} = \id_{u^{\bot}}$. For future reference, we record the following.
	
	\begin{lem}
		\label{lem: reflection in u is sa involution with det -1}
		$\theta_u \in O(V, g)$ and $\det(\theta_u) =-1$.
	\end{lem}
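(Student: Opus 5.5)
The plan is to use the orthogonal decomposition $V = \R u \oplus u^{\bot}$ and check both claims directly on it. Since $u$ is a timelike unit vector, $g(u,u) = -1 \neq 0$. Hence $u$ is non-null, $u^{\bot}$ is a complement of $\R u$, and every $v\in V$ decomposes uniquely as $v = a u + w$ with $w\in u^{\bot}$. Concretely, $a = -g(v,u)$ and $w = v + g(v,u)u$. This shows that $\theta_u$ is well defined and linear, with the explicit formula
\[
\theta_u(v) = v + 2g(v,u)u
\]
(using $g(u,u)=-1$). One checks directly that this sends $u$ to $u - 2u = -u$ and fixes every vector orthogonal to $u$.

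For the first claim, $\theta_u\in O(V,g)$, I will take $v = au+w$ and $v' = a'u + w'$ with $w,w'\in u^{\bot}$ and expand:
\[
g(\theta_u v, \theta_u v') = g(-au + w, -a'u + w') = aa'g(u,u) + g(w,w'),
\]
since the cross terms vanish by orthogonality. The same expansion gives the identical value for $g(v,v')$, so $\theta_u^*g = g$.

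For the determinant, I will choose a basis $(u, e_2,\ldots,e_n)$ with $(e_2,\ldots,e_n)$ a basis of $u^{\bot}$, which has dimension $n-1$ by nondegeneracy of $g$ and non-nullness of $u$. In this basis $\theta_u$ is the diagonal matrix $\mathrm{diag}(-1,1,\ldots,1)$, so $\det(\theta_u) = -1$.

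There is no real obstacle here. The only point needing care is that the direct-sum decomposition requires $u$ to be non-null, and this is exactly where the hypothesis that $u$ is timelike unit is used. The argument therefore works verbatim for any non-null $u$, which the paper later needs for general signatures.
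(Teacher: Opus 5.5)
Your argument is correct: the splitting $V=\R u\oplus u^{\bot}$, valid because $g(u,u)\neq 0$, gives both the isometry property and the matrix $\mathrm{diag}(-1,1,\ldots,1)$ with determinant $-1$. The paper states this lemma without proof as evident, and your verification is the standard check it leaves implicit.
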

	
	Such an endomorphism naturally acts on algebraic curvature tensors via
	\begin{equation}
		\label{eq: action of endos on curvature tensors}
		\theta_u^*C(w,x,y,z ) = C(	\theta_uw, 	\theta_ux, 	\theta_uy, 	\theta_uz).
	\end{equation}
	It is clear that the $(0, 4)$-tensor $\theta^*_uC$ is again an algebraic curvature tensor and that the map $\theta_u^*\colon \cC(V)\rightarrow \cC(V)$ is an involution. Therefore, we can decompose $C$ as follows. 
	
	\begin{defn}\leavevmode
		\label{def: electric/magnetic part, PE/PM}
		\begin{enumerate}
			\item Let $C \in \cC(V)$ be an algebraic curvature tensor and let $C_{\pm} = \frac{C \pm \theta_u^*C}{2}$. Then $C = C_+ + C_-$ is the decomposition of $C$ in $\pm 1$-eigenvectors of $\theta^*_u$. We call $C_+$ the \emph{electric part of $C$ with respect to $u$} and $C_-$ the \emph{magnetic part of $C$ with respect to $u$}.
			\item We say that $C$ is \emph{purely electric (PE) with respect to $u$} if $C = C_+$ ($C_- = 0$) and that $C$ is \emph{purely magnetic (PM) with respect to $u$} if $C = C_-$ ($C_+ = 0$).
			\item We say that $C$ is \emph{PE} or \emph{PM} if there exists a unit timelike vector $u \in V$ such that $C$ is PE or PM with respect to $u$.
		\end{enumerate}
	\end{defn}
	
	\begin{rem}\leavevmode
		\label{rem: PE/PM}
		\begin{enumerate}
			\item \label{part: vanishing components C for PE/PM}
			It is easy to check that an algebraic curvature tensor $C$ is PE with respect to $u$ if and only if 
			\[
			C(u, x , y, z) = 0
			\]
			for all $x, y, z \in u^{\bot}$. Similarly, $C$ is PM with respect to $u$ if and only if 
			\[
			C(u, x, y, u) = C(w, x, y, z) = 0
			\] 
			for all $w, x, y, z \in u^{\bot}$.
			\item \label{part: RPE/RPM implies PE/PM} If an algebraic curvature tensor $C$ is PE or PM with respect to a unit timelike vector $u$, then so is its associated Weyl curvature tensor $W_C$ \cite[Prop.\ 4.2--3]{Hervik2013minimaltensors}, see also Lemma \ref{lem: curvature tensor even/odd implies Weyl curvature tensor is}.
			\item \label{part: restricts to normal definitions for 4 dimensions}
			For Weyl curvature tensors on a 4-dimensional Lorentzian vector space, the conventional definition of the electric and magnetic parts, see Matte \cite[\S 4]{Matte1953nouvellessolutions}, is equivalent to Definition \ref{def: electric/magnetic part, PE/PM} \cite[Rem.\ 3.2]{Hervik2013minimaltensors}.
		\end{enumerate}
	\end{rem}
	
	\begin{ex}
		\label{ex: PE/PM manifolds}
		Let $I\times_f \Sigma$ be the Lorentzian warped product with an open interval $I\subset \R$ as base and an arbitrary Riemannian manifold $(\Sigma, \sigma)$ as fiber with the metric $g = -dt^2 + f(t)^2\sigma$. Then $I\times_f \Sigma$ has a PE Weyl curvature tensor with respect to the unit timelike vector field $\partial_t$. 
		
		More generally, every spacetime $M$ that admits a unit timelike congruence $u \in \fX(M)$ that is shear-free and irrotational has a Weyl curvature tensor that is PE with respect to $u$ \cite[Prop.\ 3.17]{Hervik2013minimaltensors}. 	
		Other examples of spacetimes with a PE or PM Weyl curvature tensor are spacetimes with irrotational perfect fluids \cite{Barnes1973perfectfluids, Barnes1989irrfluidswithPEWeyl, Lozanovski1999irrperffluidwithPMWeyl, Lozanovski1999perffluidsPMWeyl}.
	\end{ex}
	
	\subsection{A vanishing theorem for Pontryagin forms of even/odd curvature tensors}
	\label{sect: main result}
	In this section, we prove in Theorem \ref{thm: vanishing Pontryagin forms vector spaces} that if an algebraic curvature tensor $C$ is $\theta$-even or -odd (in the sense of Definition \ref{def: even/odd curvature tensor}), $\theta$ has determinant $-1$, and the dimension of the vector space is a multiple of 4, then any product of Pontryagin forms of $C$ that lands in the top exterior power of the dual vector space vanishes. This can be proven generally for all algebraic curvature tensors. As a consequence, we derive in Theorem \ref{thm: vanishing Pontryagin forms mfds} the vanishing of products of Pontryagin classes on compact and orientable pseudo-Riemannian manifolds that have PE or PM Weyl curvature tensors. In Sections \ref{sect: nonexistence Petrov types} and \ref{sect: nonexistence umbilic hypersurfaces}, these results are related to the Petrov classification for 4-dimensional Lorentzian manifolds and nondegenerate umbilic hypersurfaces of pseudo-Riemannian manifolds.
	
	For the rest of this section, let $(V, g)$ be a scalar product space of dimension $n$.
	
	\begin{defn}
		\label{def: even/odd curvature tensor}
		Let $C \in \cC(V)$ be an algebraic curvature tensor and let $\theta \in O(V, g)$. We say that $C$ is \emph{$\theta$-even} if 
		\[
		\theta^*C(u,x,y,z) = C(u,x,y,z),
		\]
		for all $u, x, y, z \in V$. Similarly, we say that $C$ is \emph{$\theta$-odd} if 
		\[
		\theta^*C(u,x,y,z) = -C(u,x,y,z),
		\]
		for all $u, x, y, z \in V$.
	\end{defn}
	
	By Lemma \ref{lem: reflection in u is sa involution with det -1}, we see that a PE or PM algebraic curvature tensor $C \in \cC(V)$ is $\theta$-even or -odd with respect to an orientation reversing isometry $\theta$ of $V$, respectively.
	
	Like being PE or PM, being $\theta$-even- or -odd also descends to Weyl curvature tensors (see part \ref{part: RPE/RPM implies PE/PM} of Remark \ref{rem: PE/PM}).
	
	\begin{lem}
		\label{lem: curvature tensor even/odd implies Weyl curvature tensor is}
		Let $C \in \cC(V)$ be an algebraic curvature tensor and let $\theta \in O(V, g)$. If $C$ is $\theta$-even or -odd, then so is its Weyl curvature tensor $W_C$.
	\end{lem}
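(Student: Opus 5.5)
The plan is to show that the map $W\colon \cC(V)\to\cW(V,g)$, $C\mapsto W_C$, is equivariant under the action of $O(V,g)$, that is, $W_{\theta^*C} = \theta^*W_C$ for all $\theta\in O(V,g)$. Once this holds, the lemma follows from linearity. If $\theta^*C = \pm C$, then
\[
\theta^*W_C = W_{\theta^*C} = W_{\pm C} = \pm W_C.
\]
Linearity of $C\mapsto W_C$ is clear from Definition \ref{def: curvature components}, since $\Ric_C$, $S_C$ and $P_C$ depend linearly on $C$.

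One quick route to equivariance is Remark \ref{rem: refined curvature decomposition theorem}. There $W_C$ is the orthogonal projection of $C$ onto the $O(V,g)$-submodule $\cW(V,g)$, and projections onto submodules are $O(V,g)$-equivariant. To keep the argument self-contained, I would also check it directly; this takes three short computations.

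\textbf{Ricci and scalar curvature.} Write $\tr^{1,4}_g$ in a $g$-orthonormal basis $(e_i)$ with $g(e_i,e_i)=\eps_i$, so that $\Ric_C(x,y)=\sum_i \eps_i C(e_i,x,y,e_i)$. Then
\[
\Ric_{\theta^*C}(x,y)=\sum_i \eps_i C(\theta e_i,\theta x,\theta y,\theta e_i) = \Ric_C(\theta x,\theta y).
\]
The second equality holds because $(\theta e_i)$ is again orthonormal with the same signs, and the trace is basis independent. So $\Ric_{\theta^*C}=\theta^*\Ric_C$. Taking one more trace in the same way gives $S_{\theta^*C}=S_C$.

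\textbf{Schouten tensor and Kulkarni--Nomizu product.} Since $\theta^*g=g$, the previous step gives $P_{\theta^*C}=\theta^*P_C$. The Kulkarni--Nomizu product is natural, i.e. $\theta^*(h\owedge k)=\theta^*h\owedge\theta^*k$, which is immediate from its defining formula. Hence $P_{\theta^*C}\owedge g=\theta^*(P_C\owedge g)$.

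\textbf{Weyl tensor.} Subtracting, $W_{\theta^*C}=\theta^*C-\theta^*(P_C\owedge g)=\theta^*W_C$.

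None of these steps is a real obstacle. The only point needing care is the basis independence of the metric trace under the isometry $\theta$: $\theta$ maps an orthonormal basis to an orthonormal basis with the same signature signs $\eps_i$.
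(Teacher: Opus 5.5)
Your proposal is correct and matches the paper's proof: linearity of $C\mapsto W_C$ combined with $O(V,g)$-equivariance. The paper takes the equivariance from the decomposition in Remark~\ref{rem: refined curvature decomposition theorem}; your direct check via naturality of the metric trace, the Schouten tensor and the Kulkarni--Nomizu product is a valid and more self-contained substitute for that citation.
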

	\begin{proof}
		The map $\cW\colon \cC(V)\rightarrow \cW(V, g)$ mapping an algebraic curvature tensor $C$ to its Weyl curvature tensor $W_C$ is a linear map. Indeed, by linearity of the trace, both the Ricci curvature $\Ric_C$ and  scalar curvature $S_C$ depend linearly on $C$. Therefore, also the Schouten tensor 
		\[
		P_C = \frac{1}{n-2}\left(\Ric_C-\frac{S_C}{2(n-1)}g\right)
		\]
		depends linearly on $C$. Finally, by bilinearity of the Kulkarni--Nomizu product, it follows that
		\[
		W_C = C - P_C\owedge g
		\]
		depends linearly on $C$. Moreover, the decomposition theorem from Remark \ref{rem: refined curvature decomposition theorem} ensures that the map $\cW\colon \cC(V)\rightarrow \cW(V, g)$ is $O(V, g)$-equivariant. It follows that if $C$ is $\theta$-even or -odd, then 
		\[
		\theta^*W_C = \theta^*\cW(C) = \cW(\theta^*C) = \pm\cW(C) = \pm W_C.\qedhere
		\]
	\end{proof}
	
	The following result is an easy consequence of the definitions.
	\begin{lem}
		\label{lem: (anti)symmetry on curvature operator}
		Let $C \in \cC(V)$ be an algebraic curvature tensor and let $\hat{C}$ be the induced curvature operator. Let $\theta \in O(V, g)$. If $C$ is  $\theta$-even or -odd, then 
		\begin{equation}
			\label{eq: commutating curvature operator with induced map}
			\hat{C}\circ\wedge^2\theta = \pm \wedge^2\theta\circ\hat{C}, 
		\end{equation} 
		where the positive sign corresponds to the $\theta$-even case and the negative sign to the $\theta$-odd case.
	\end{lem}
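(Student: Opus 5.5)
The plan is to test the claimed identity \eqref{eq: commutating curvature operator with induced map} on a spanning set of $\wedge^2V$ and to use nondegeneracy of $\langle\cdot,\cdot\rangle_g$ on $\wedge^2V$. Both sides are linear operators on $\wedge^2V$. The bivectors $u\wedge v$ span $\wedge^2V$, and the pairing is nondegenerate. It therefore suffices to show that
\[
\langle \hat{C}(\wedge^2\theta(u\wedge v)), x\wedge y\rangle_g = \pm\langle \wedge^2\theta(\hat{C}(u\wedge v)), x\wedge y\rangle_g
\]
for all $u,v,x,y\in V$.

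The first step computes the left-hand side. Since $\wedge^2\theta(u\wedge v)=\theta u\wedge\theta v$, Definition \ref{def: curvature operator} gives $C(\theta u,\theta v,x,y)$.

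The second step handles the right-hand side. Because $\theta\in O(V,g)$, it is invertible, so $\wedge^2\theta$ is invertible with inverse $\wedge^2\theta^{-1}$. Moreover, \eqref{eq: induced inner product on exterior algebra} shows that $\wedge^2\theta$ is an isometry of $\langle\cdot,\cdot\rangle_g$, since the Gram determinant is unchanged when every vector is replaced by its image under $\theta$. Hence
\[
\langle \wedge^2\theta(\hat{C}(u\wedge v)), x\wedge y\rangle_g = \langle \hat{C}(u\wedge v), \theta^{-1}x\wedge\theta^{-1}y\rangle_g = C(u,v,\theta^{-1}x,\theta^{-1}y).
\]

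The third step applies the parity hypothesis. Write $x=\theta x'$ and $y=\theta y'$, which is possible by surjectivity of $\theta$. Then $C(\theta u,\theta v,\theta x',\theta y') = \theta^*C(u,v,x',y') = \pm C(u,v,x',y')$. Therefore the left-hand side equals $\pm C(u,v,\theta^{-1}x,\theta^{-1}y)$, which is $\pm$ the right-hand side. The sign is $+$ in the $\theta$-even case and $-$ in the $\theta$-odd case.

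There is no real obstacle here. The only point needing care is justifying that $\wedge^2\theta$ preserves the induced scalar product and is invertible, and both facts follow immediately from the determinant formula and from $\theta\in O(V,g)$.
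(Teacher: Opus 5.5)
Your proof is correct and is the direct verification from Definition~\ref{def: curvature operator}, the isometry property of $\wedge^2\theta$ for $\langle\cdot,\cdot\rangle_g$, and the parity hypothesis that the paper has in mind when it calls the lemma an easy consequence of the definitions. The paper gives no written proof, and your argument supplies the omitted details without any gap.
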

	
	\begin{lem}
		\label{lem: Pontryagin forms of (anti)symmetric curvature is symmetric}
		Let $C \in \cC(V)$ be an algebraic curvature tensor and let $\theta \in O(V,g)$. If $C$ is $\theta$-even or -odd, then for all $1\leq k\leq \fl{\frac{n}{2}}$ we have
		\[
		\wedge^{4k}\theta^*(\varpi_k(C)) = \varpi_k(C).
		\]
	\end{lem}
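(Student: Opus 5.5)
We will use the expression of the Pontryagin form from Theorem \ref{thm: expression Pontryagin form},
\[
\varpi_k(C) = \frac{1}{(2\pi)^{2k}k!^2}F_{2k}(\hat{C}^{*k}, \hat{C}^{*k}).
\]
The claim then follows from two observations. First, $\wedge^{2k}\theta$ essentially commutes with the $k$th curvature operator. Second, the construction $F_{2k}$ is natural under isometries.

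\textbf{Step 1.} From Lemma \ref{lem: (anti)symmetry on curvature operator} we have $\hat{C}\circ\wedge^2\theta = \eps\,\wedge^2\theta\circ\hat{C}$ with $\eps=\pm1$. We iterate using the compatibility identities \eqref{eq: compatibility * back} and \eqref{eq: compatibility * front} of Lemma \ref{lem: properties *}, together with bilinearity of $*$:
\[
\hat{C}^{*k}\circ\wedge^{2k}\theta = (\hat{C}\circ\wedge^2\theta)^{*k} = \eps^k(\wedge^2\theta\circ\hat{C})^{*k} = \eps^k\,\wedge^{2k}\theta\circ\hat{C}^{*k}.
\]
Associativity of $*$ lets us argue by induction on $k$, splitting $\hat{C}^{*k}=\hat{C}^{*(k-1)}*\hat{C}$.

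\textbf{Step 2.} We show naturality of $F_{m}$ under an isometry. For $A,B\in\End(\wedge^mV)$ we have
\[
\wedge^{2m}\theta^*F_m(A,B) = F_m(A\circ\wedge^m\theta,\,B\circ\wedge^m\theta).
\]
To see this, evaluate on $x_1\wedge\cdots\wedge x_{2m}$. Pulling back replaces each $x_i$ by $\theta x_i$, and $\theta$ commutes with the permutation of the arguments. So this is immediate from Definition \ref{def: differential forms induced by exterior endomorphisms}.

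Next, since $\theta\in O(V,g)$, formula \eqref{eq: induced inner product on exterior algebra} shows that $\wedge^m\theta$ preserves $\langle\cdot,\cdot\rangle_g$. This holds because the Gram determinant of the vectors $\theta v_i,\theta w_j$ equals that of $v_i,w_j$. Consequently
\[
F_m(\wedge^m\theta\circ A,\wedge^m\theta\circ B)=F_m(A,B).
\]

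\textbf{Step 3.} We combine the two steps with $m=2k$, $A=B=\hat{C}^{*k}$:
\[
\wedge^{4k}\theta^*F_{2k}(\hat{C}^{*k},\hat{C}^{*k}) = F_{2k}\bigl(\eps^k\wedge^{2k}\theta\circ\hat{C}^{*k},\,\eps^k\wedge^{2k}\theta\circ\hat{C}^{*k}\bigr) = \eps^{2k}F_{2k}(\hat{C}^{*k},\hat{C}^{*k}).
\]
The final equality uses bilinearity of $F_{2k}$ and the isometry invariance from Step 2. Since $\eps^{2k}=1$ in both the even and the odd case, multiplying by the normalising constant gives $\wedge^{4k}\theta^*(\varpi_k(C))=\varpi_k(C)$.

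The main obstacle is the bookkeeping in Step 1: applying \eqref{eq: compatibility * back} and \eqref{eq: compatibility * front} correctly with the degrees $2(k-1)$ and $2$. The sign is harmless, since it appears squared.
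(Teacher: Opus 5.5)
Your proposal is correct and follows essentially the same route as the paper's proof. Both rewrite $\varpi_k(C)$ via $F_{2k}(\hat{C}^{*k},\hat{C}^{*k})$ using Theorem \ref{thm: expression Pontryagin form}, pull $\wedge^{4k}\theta^*$ inside as precomposition with $\wedge^{2k}\theta$, and move $\wedge^{2k}\theta$ through the $*$-power using \eqref{eq: compatibility * back}, \eqref{eq: compatibility * front} and Lemma \ref{lem: (anti)symmetry on curvature operator}; the resulting sign appears squared, and the final step uses that $\wedge^{2k}\theta$ is an isometry of $\langle\cdot,\cdot\rangle_g$. Your version additionally makes explicit the induction on $k$ and the naturality of $F_m$, both of which the paper leaves implicit.
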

	\begin{proof}
		An easy computation using Theorem \ref{thm: expression Pontryagin form} shows that
		\begin{align*}
			\wedge^{4k}\theta^*(\varpi_k(C)) &= \frac{1}{(2\pi)^{2k}k!^2}\wedge^{4k}\theta^*(F_{2k}(\hat{C}^{*k}, \hat{C}^{*k}))\\
			&\overset{\text{Def. } \ref{def: differential forms induced by exterior endomorphisms}}{=}   \frac{1}{(2\pi)^{2k}k!^2}F_{2k}(\hat{C}^{*k}\circ \wedge^{2k}\theta, \hat{C}^{*k}\circ \wedge^{2k}\theta)\\
			&\overset{\eqref{eq: compatibility * back}}{=} \frac{1}{(2\pi)^{2k}k!^2}F_{2k}((\hat{C}\circ \wedge^{2}\theta)^{*k}, (\hat{C}\circ \wedge^{2}\theta)^{*k})\\
			&\overset{\eqref{eq: commutating curvature operator with induced map}}{=} \frac{1}{(2\pi)^{2k}k!^2}F_{2k}((\pm1)^k(\wedge^{2}\theta \circ \hat{C})^{*k}, (\pm1)^k(\wedge^{2}\theta \circ \hat{C})^{*k})\\
			&\overset{\eqref{eq: compatibility * front}}{=} \frac{(\pm1)^{2k}}{(2\pi)^{2k}k!^2}F_{2k}(\wedge^{2k}\theta \circ \hat{C}^{*k}, \wedge^{2k}\theta \circ\hat{C}^{*k})\\
			&=\frac{1}{(2\pi)^{2k}k!^2}F_{2k}(\hat{C}^{*k}, \hat{C}^{*k}) = \varpi_k(C),
		\end{align*}
		where the first equality on the last line follows from the fact that $\wedge^{2k}\theta$ is an isometry for $\langle-,-\rangle_g$ as $\theta$ is an isometry for $g$.
	\end{proof}
	
 	For a multi-index of positive integers $\alpha = (\alpha_1, \ldots, \alpha_l) \in \Z_{>0}^l$ of length $l$, we denote its \emph{absolute value} by 
	\begin{equation}
		\label{eq: absolute value multi-index}
		|\alpha| = \sum_{i = 1}^l\alpha_i.
	\end{equation}
	Note that if $|\alpha| = k$, then the length of $\alpha$ is less than or equal to $k$ and none of the entries of $\alpha$ are greater than $k$.
	
	In Lemma \ref{lem: Pontryagin forms of (anti)symmetric curvature is symmetric}, we showed that if $C$ is a $\theta$-even or -odd algebraic curvature tensor, then the Pontryagin forms $\varpi_k(C)$ of $C$ are fixed by the map $\wedge^{4k}\theta^*$ induced on $\wedge^{4k}V^*$ by $\theta$. This also means that products of Pontryagin forms that land in $\wedge^{4l}V^*$ for some integer $l$ are fixed by the map $\wedge^{4l}\theta^*$ induced on $\wedge^{4l}V^*$ by $\theta$. If $V$ is $n$-dimensional, then $\wedge^nV^*$ is 1-dimensional and the induced map $\wedge^n\theta^*$ is nothing but multiplication by $\det(\theta^*) = \det(\theta)$ \cite[Ch.\ 7.2]{Greub1978multilinalg}. Finally, if $\theta \in O(V,g)$, we have either $\det(\theta) = \pm 1$. So if $\det(\theta) = -1$, we obtain the following result on the vanishing of (products of) Pontryagin forms.
	
	\begin{thm}[Vanishing theorem Pontryagin forms]
		\label{thm: vanishing Pontryagin forms vector spaces}
		Let $V$ be $4k$-dimensional and let $\theta\in O(V,g)$ with $\det(\theta) = -1$. Suppose that $C$ is an algebraic curvature tensor that is $\theta$-even or -odd. Then for all multi-indices $\alpha$ with $|\alpha| = k$:
		\begin{equation}
			\label{eq: vanishing Pontryagin forms}
			\varpi_{\alpha}(C):= \varpi_{\alpha_1}(C)\wedge\cdots\wedge\varpi_{\alpha_l}(C) = 0 \in \wedge^{4k}V^*.
		\end{equation}
	\end{thm}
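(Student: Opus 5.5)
The plan is to combine Lemma \ref{lem: Pontryagin forms of (anti)symmetric curvature is symmetric} with the fact that pullback by $\theta$ is an algebra homomorphism of $\wedge^\bullet V^*$. Then we compare two computations of $\wedge^{4k}\theta^*(\varpi_\alpha(C))$.

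First, fix a multi-index $\alpha=(\alpha_1,\ldots,\alpha_l)$ with $|\alpha|=k$. Each $\alpha_i$ satisfies $1\leq\alpha_i\leq k\leq\fl{\frac{n}{2}}=2k$, so every $\varpi_{\alpha_i}(C)$ is defined. Moreover $\varpi_\alpha(C)$ lies in $\wedge^{4|\alpha|}V^*=\wedge^{4k}V^*=\wedge^nV^*$.

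Next, use that the induced maps $\wedge^m\theta^*$ on the exterior algebra respect wedge products: $\wedge^{a+b}\theta^*(\beta\wedge\gamma)=\wedge^a\theta^*\beta\wedge\wedge^b\theta^*\gamma$. This is immediate from the definition of the pullback. It gives
\[
\wedge^{4k}\theta^*(\varpi_\alpha(C))=\wedge^{4\alpha_1}\theta^*(\varpi_{\alpha_1}(C))\wedge\cdots\wedge\wedge^{4\alpha_l}\theta^*(\varpi_{\alpha_l}(C)).
\]
By Lemma \ref{lem: Pontryagin forms of (anti)symmetric curvature is symmetric}, applied to each factor and valid in both the $\theta$-even and $\theta$-odd cases, each factor is fixed. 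Hence $\wedge^{4k}\theta^*(\varpi_\alpha(C))=\varpi_\alpha(C)$.

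On the other hand, $\wedge^nV^*$ is one-dimensional, and $\wedge^n\theta^*$ acts on it by multiplication by $\det(\theta^*)=\det(\theta)=-1$. So $\varpi_\alpha(C)=-\varpi_\alpha(C)$, which forces $\varpi_\alpha(C)=0$.

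No step is a real obstacle, since the substantive work (the $\pm$ signs cancelling because of the even power $(\pm1)^{2k}$, and $\wedge^{2k}\theta$ being an isometry) is already contained in the earlier lemma. The only points to state carefully are the multiplicativity of pullback and the identification of $\wedge^n\theta^*$ with multiplication by the determinant.
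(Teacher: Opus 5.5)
Your proposal is correct and follows the paper's proof essentially step for step. Like the paper, you compute $\wedge^{4k}\theta^*(\varpi_\alpha(C))$ in two ways: once as multiplication by $\det(\theta)=-1$ on the one-dimensional space $\wedge^{4k}V^*$, and once as the identity, using multiplicativity of the pullback together with Lemma~\ref{lem: Pontryagin forms of (anti)symmetric curvature is symmetric}. Comparing the two forces $\varpi_\alpha(C)=0$.
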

	\begin{proof}
		Since $\varpi_{\alpha}(C) \in \wedge^{4k}V^*$, which is 1-dimensional as $V$ is $4k$-dimensional. It follows that $\wedge^{4k}\theta^*$ is nothing but multiplication with $\det(\theta^*) = -1$. So on the one hand, we have
		\begin{equation}
			\label{eq: product of Pontryagin forms is antisymmetric}
			\wedge^{4k}\theta^*(\varpi_{\alpha}(C)) = \det(\theta^*)\varpi_{\alpha}(C) = -\varpi_{\alpha}(C).
		\end{equation}
		
		On the other hand, by Lemma \ref{lem: Pontryagin forms of (anti)symmetric curvature is symmetric},
		\begin{equation}
			\label{eq: product of Pontryagin forms is symmetric}
			\wedge^{4k}\theta^*(\varpi_{\alpha}(C)) =  \wedge^{4\alpha_1}\theta^*(\varpi_{\alpha_1}(C))\wedge\cdots\wedge(\wedge^{4\alpha_l}\theta^*(\varpi_{\alpha_l}(C))) = \varpi_{\alpha}(C).
		\end{equation}
		The result follows by combining \eqref{eq: product of Pontryagin forms is antisymmetric} and \eqref{eq: product of Pontryagin forms is symmetric}.		
	\end{proof}
	
	Now, if $(M, g)$ is a $4k$-dimensional pseudo-Riemannian manifold and if there is some $\theta \in \End(TM)$ satisfying the conditions of Theorem \ref{thm: vanishing Pontryagin forms vector spaces} at each $p\in M$ for which the Weyl curvature tensor of $M$ is even or odd. Then by applying Theorem \ref{thm: vanishing Pontryagin forms vector spaces} to the tangent spaces of $M$, we obtain the following result on the vanishing of products of Pontryagin classes.
	
	\begin{thm}[Vanishing theorem Pontryagin classes]
		\label{thm: vanishing Pontryagin forms mfds}
		Let $(M, g)$ be a $4k$-dimensional pseudo-Riemannian manifold and let $\theta \in \End(TM)$ with $\theta_p \in O(T_pM, g_p)$ and $\det(\theta_p) = -1$ for all $p\in M$. Suppose that the Weyl curvature tensor $W$ is $\theta$-even or -odd at each $p\in M$. Then for all multi-indices $\alpha$ with $|\alpha| = k$,
		\begin{equation}
			\label{eq: Pontraygin forms vanish on mfd}
			\varpi_{\alpha}(W) = 0 \in \Omega^{4k}(M).
		\end{equation}
		In particular, the product of Pontryagin classes
		\begin{equation}
			\label{eq: Pontraygin classes vanish on mfd}
		p_{\alpha}(M):= p_{\alpha_1}(M)\smile\cdots\smile p_{\alpha_l}(M) = 0 \in H^{4k}_{dR}(M).
		\end{equation}
	\end{thm}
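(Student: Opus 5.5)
The argument is pointwise: I apply Theorem \ref{thm: vanishing Pontryagin forms vector spaces} at each tangent space and then pass to cohomology using Corollary \ref{cor: Pontryagin class manifold}.

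\emph{Step 1: vanishing of the forms.} Fix $p\in M$ and set $V=T_pM$ with scalar product $g_p$, so $\dim V = 4k$. By hypothesis $\theta_p\in O(T_pM,g_p)$, $\det(\theta_p)=-1$, and $W_p$ is a $\theta_p$-even or $\theta_p$-odd algebraic curvature tensor on $T_pM$. Theorem \ref{thm: vanishing Pontryagin forms vector spaces} then gives $\varpi_\alpha(W_p)=0$ in $\wedge^{4k}T_p^*M$ for every $\alpha$ with $|\alpha|=k$.

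It remains to check that $p\mapsto\varpi_\alpha(W_p)$ is exactly the differential form $\varpi_\alpha(W)\in\Omega^{4k}(M)$. Definition \ref{def: Pontrayagin form} is pointwise, and the wedge product of forms is computed pointwise. So $\varpi_\alpha(W)$ vanishes at every point, hence identically. No smoothness or continuity of $\theta$ in $p$ is needed, because only the pointwise hypothesis enters.

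\emph{Step 2: passing to cohomology.} By Corollary \ref{cor: Pontryagin class manifold} and Theorem \ref{thm: expression Pontryagin form}, $p_j(M)$ is represented by the closed form $\varpi_j(Rm)=\varpi_j(W)$; the equality of forms holds pointwise by Theorem \ref{thm: Pontryagin forms equal for weyl curvature}. The one compatibility to check is the normalization: Definition \ref{def: Pontryagin class E and M} uses $\frac{1}{(2\pi)^{2j}}\sigma_{2j}(\Omega_\nabla)$ for the Levi-Civita connection, and this agrees with Definition \ref{def: Pontrayagin form} applied to $Rm_p$.

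The cup product in de Rham cohomology is induced by the wedge product of representatives. Therefore
\[
p_\alpha(M)=[\varpi_{\alpha_1}(W)\wedge\cdots\wedge\varpi_{\alpha_l}(W)]=[\varpi_\alpha(W)]=[0]=0 .
\]

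\emph{Expected obstacle.} There is essentially none; the argument is bookkeeping. The only point needing care is the identification in Step 2. The Pontryagin form built from the curvature matrix of $Rm$, viewed as a $(1,3)$-tensor via $g$, must coincide with the Chern--Weil representative $\frac{1}{(2\pi)^{2j}}\sigma_{2j}(\Omega_\nabla)$ of $p_j(TM)$. This holds because the curvature matrix $\Omega_U$ of the Levi-Civita connection in a local frame is precisely the pointwise curvature matrix of $R$, which is the statement already packaged in Corollary \ref{cor: Pontryagin class manifold}.
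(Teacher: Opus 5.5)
Your proposal is correct and follows the paper's proof: apply Theorem~\ref{thm: vanishing Pontryagin forms vector spaces} on each $(T_pM, g_p)$ to get $\varpi_\alpha(W)=0$ pointwise, then use Corollary~\ref{cor: Pontryagin class manifold} (via Theorem~\ref{thm: expression Pontryagin form}) to conclude $p_\alpha(M)=[\varpi_\alpha(W)]=0$. Your remarks on matching the $(2\pi)^{-2j}$ normalization and on not needing continuity of $\theta$ are accurate; the paper leaves them implicit.
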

	\begin{proof}
		The identity \eqref{eq: Pontraygin forms vanish on mfd} follows directly by applying Theorem \ref{thm: vanishing Pontryagin forms vector spaces} to $(T_pM, g_p)$ for each $p\in M$. Using Corollary \ref{cor: Pontryagin class manifold} and \eqref{eq: Pontraygin forms vanish on mfd}, we see that 
		\[
		p_{\alpha_1}(M)\smile\cdots\smile p_{\alpha_l}(M) = [\varpi_{\alpha_1}(W)\wedge\cdots\wedge\varpi_{\alpha_l}(W)] = [0] = 0.\qedhere
		\]
	\end{proof}
	
	\begin{rem} Let us compare how Theorem \ref{thm: vanishing Pontryagin forms mfds} relates to existing results in the literature.
		\label{rem: comparing main theorem to literature}
		\begin{enumerate}
			\item \label{part: results Mantica en Molinari} Mantica and Molinari introduced the notion of a $C$-\emph{compatible vector} as a special case of the more general notion of $C$-\emph{compatible symmetric $(0, 2)$-tensors} which are studied in \cite{Mantica2012Riemanncompatibletensors, Mantica2014Weylcompatibletensors}. If $u \in V$ is not null and $C$-compatible, then
			\[
			C(u, x , y, z) = 0
			\]
			for all $x, y, z \in u^{\bot}$ \cite[Thm.\ 3.4]{Mantica2014Weylcompatibletensors}. An easy computation using the algebraic Bianchi identity for algebraic curvature tensors shows that the converse also holds. So for timelike unit vectors $u\in V$, $C$-compatibility is equivalent to $C$ being PE with respect to $u$ by part \ref{part: vanishing components C for PE/PM} of Remark \ref{rem: PE/PM}. They also derive a theorem relating the existence of $C$-compatible vectors to the vanishing of Pontryagin classes \cite[Thm.\ 3.6]{Mantica2014Weylcompatibletensors}. Their theorem differs from our Theorem \ref{thm: vanishing Pontryagin forms mfds} in that they use the very strong assumption of having, around each point $p\in M$, a local orthonormal frame where all but two frame fields are $Rm$-compatible. Using this assumption, they conclude that the Riemann tensor is \emph{pure}---that is, the Riemann curvature operator $\hat{Rm}$ is diagonal with respect to an orthonormal basis of $\wedge^2T_pM$ induced by an orthonormal basis of $T_pM$. A theorem of Maillot \cite{Maillot1974courburepur} then ensures the vanishing of all Pontryagin forms and classes.
			\item If the manifold $M$ in Theorem \ref{thm: vanishing Pontryagin forms mfds} is 4-dimensional, then the conclusion is that $p_1(M)$ vanishes. For $4$-dimensional Lorentzian manifolds $M$, it was already known that the Pontryagin class $p_1(M)$ vanishes if the manifold admits a Lorentzian metric with a PE or PM Weyl curvature tensor. This follows directly from \cite[p.\ 323]{Zund1966PontryaginnumbersPRgeometry}, but is not mentioned explicitly. Our Theorem \ref{thm: vanishing Pontryagin forms mfds} extends this result to higher-dimensional manifolds, all pseudo-Riemannian signatures and to a broader class of symmetries.
		\end{enumerate}
	\end{rem}
	
	\begin{rem}[Theorem \ref{thm: vanishing Pontryagin forms mfds} for the Riemann tensor $Rm$]
		\label{rem: main theorem for Rm}
		If $(M, g)$ and $\theta$ are as in Theorem \ref{thm: vanishing Pontryagin forms mfds} but now the Riemann curvature tensor $Rm$ is $\theta$-even or -odd at each point $p \in M$, then we can also apply Theorem \ref{thm: vanishing Pontryagin forms vector spaces} pointwise to $Rm$ to draw the same conclusion as in Theorem \ref{thm: vanishing Pontryagin forms mfds}. Alternatively, in this case Theorem \ref{thm: vanishing Pontryagin forms mfds} can also be applied directly as the Weyl curvature tensor $W$ is also $\theta$-even or -odd by Lemma \ref{lem: curvature tensor even/odd implies Weyl curvature tensor is}.
	\end{rem}
	
	In the light of Mantica and Molinari's vanishing result on Pontryagin classes \cite[Thm.\ 3.6]{Mantica2014Weylcompatibletensors} discussed in part \ref{part: results Mantica en Molinari} of Remark \ref{rem: comparing main theorem to literature}, it is natural to ask whether or not our Theorem \ref{thm: vanishing Pontryagin forms mfds} can be sharpened. Example \ref{ex: manifolds with nonvanishing Pontryagin forms and classes} shows that the conclusion of Theorem \ref{thm: vanishing Pontryagin forms mfds} is optimal in the sense that under the stated assumptions, all products of Pontryagin classes of degree strictly less than $4k$ need not vanish. However, it remains open whether the assumptions on $W$ can be weakened.
	
	\begin{ex}[Theorem \ref{thm: vanishing Pontryagin forms mfds} is optimal]
		\label{ex: manifolds with nonvanishing Pontryagin forms and classes}
		If $k=1$ in Theorem \ref{thm: vanishing Pontryagin forms mfds}, then there are no Pontryagin forms of degree less then $4k$. So to give an example of a pseudo-Riemannian manifold of dimension $4k$ satisfying the assumptions of Theorem \ref{thm: vanishing Pontryagin forms mfds} for which all products of Pontryagin forms of degree less than $4k$ do not vanish, we need at least $k\geq 2$. We give an example for any $k\geq 2$.

		Let $k\geq 2$ be a positive integer. We construct a Lorentzian manifold $(M, g)$ of dimension $4k$ that has a PE Weyl curvature tensor and such that for all multi-indices $\alpha$ with $|\alpha| < k$, the class
		\[
		p_\alpha(M) \neq 0 \in H^{4|\alpha|}_{dR}(M).
		\]
		Then in particular, for all multi-indices $\alpha$ with $|\alpha| < k$, the $4|\alpha|$-form
		\[
		\varpi_\alpha(M) \neq 0 \in \Omega^{4|\alpha|}(M).
		\]
		
		First, consider the manifold $\C P^{2k-2}$, which is of real dimension $4k-4$. It can be shown \cite[p.\ 185]{Milnor74charclasses} that for all multi-indices $\alpha$ with $|\alpha| \leq k-1$
		\[
		p_{\alpha}(\C P^{2k-2})\neq 0.
		\] 
		Equip $\C P^{2k-2}$ with an arbitrary Riemannian metric $h$ and use the $4$-dimensional Minkowski space $(\R^{1, 3}, \eta)$ to form the $4k$-dimensional Lorentzian product manifold 
		\[
		(M, g) = (\R^{1, 3}\times \C P^{2k-2}, \eta\oplus h),
		\] 
		which has a PE Weyl curvature tensor by Example \ref{ex: PE/PM manifolds}.
		
		We will show that for all multi-indices $\alpha$ with $|\alpha| \leq k-1$
		\[
		p_{\alpha}(M)\neq 0 \in H^{4|\alpha|}_{dR}(M).
		\] 
		Denote by $p(M)$, $p(\R^{1, 3})$ and $p(\C P^{2k-2})$ the total Pontryagin classes of these manifolds and denote by $\pi_1 \colon M\rightarrow \R^{1, 3}$ and $\pi_2\colon M\rightarrow \C P^{2k-2}$ the projection maps. Using standard properties of Pontryagin classes (see, for example, \cite[\S 15]{Milnor74charclasses}), we see that
		\begin{align}
			p(M) &= p(\pi_1^*T\R^{1,3}\oplus \pi^*_2T\C P^{2k-2}) \notag\\
			\label{eq: pi_2 relates Pontryagin classes}&= \pi_1^*p(\R^{1,3})\smile \pi^*_2p(\C P^{2k-2}) = \pi^*_2p(\C P^{2k-2}),
		\end{align}
		where the last equality follows as $p(\R^{1, 3}) = 1$ as $\R^{1, 3}$ has a trivial tangent bundle. Since the projection $\pi_2$ is a homotopy equivalence, the induced map $\pi_2^*\colon H^{\bullet}_{dR}(\C P^{2k-2}) \xrightarrow{\sim} H^{\bullet}_{dR}(M)$ is an algebra isomorphism. It follows that for every multi-index $\alpha$ with $|\alpha|\leq k-1$, 
		\[
		p_\alpha(M) = \pi_2^*(p_\alpha(\C P^{2k-2})) \neq 0,
		\]
		as $p_\alpha(\C P^{2k-2}) \neq 0$ \cite[p.\ 185]{Milnor74charclasses} and $\pi_2^*$ is injective. We conclude that for all multi-indices $\alpha$ with $|\alpha| < k$, the form
		\[
		\varpi_\alpha(M) \neq 0 \in \Omega^{4|\alpha|}(M).
		\]
	\end{ex}
	
	As a corollary to Theorem \ref{thm: vanishing Pontryagin forms mfds} we find a new obstruction result for PE or PM metrics for compact orientable manifolds. 
	
	\begin{cor}[Pontryagin class obstruction for PE/PM metrics]
		\label{cor: nonexistence PE/PM}
		Let $M$ be a compact and orientable $4k$-dimensional manifold. Suppose that there exists a multi-index $\alpha$ with $|\alpha| = k$ such that 
		\begin{equation}
			%\label{eq: Pontraygin classes vanish on mfd1}
			p_{\alpha}(M) \neq 0 \in H^{4k}_{dR}(M).
		\end{equation}
		Then $M$ does not admit a pseudo-Riemannian metric such that the Weyl curvature tensor is $PE$ or $PM$ at all points $p \in M$.
	\end{cor}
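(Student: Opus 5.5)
The plan is to argue by contraposition: assume $M$ carries a pseudo-Riemannian metric $g$ whose Weyl curvature tensor $W$ is PE or PM at every point, and show that then $p_\alpha(M)=0$ for every multi-index $\alpha$ with $|\alpha|=k$. This contradicts the hypothesis.

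\textbf{Step 1: a reflection at each point.} At each $p\in M$, being PE or PM gives a unit vector $u_p\in T_pM$ such that $W_p$ is PE or PM with respect to $u_p$. In the Lorentzian case $u_p$ is timelike. In general signature we use the extension of the definition to non-null unit vectors mentioned in the introduction. Let $\theta_p:=\theta_{u_p}$ be the reflection in $u_p^{\bot}$. By Lemma \ref{lem: reflection in u is sa involution with det -1}, whose proof uses only $g(u_p,u_p)\neq 0$, we have $\theta_p\in O(T_pM,g_p)$ and $\det(\theta_p)=-1$. By Definition \ref{def: electric/magnetic part, PE/PM}, $W_p$ is $\theta_p$-even in the PE case and $\theta_p$-odd in the PM case. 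Both the vector $u_p$ and the choice between PE and PM may vary from point to point.

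\textbf{Step 2: the top-degree forms vanish pointwise.} I would not try to assemble the $\theta_p$ into a smooth bundle endomorphism $\theta\in\End(TM)$ as required in Theorem \ref{thm: vanishing Pontryagin forms mfds}. This is the only potential obstacle, and it can be avoided. The conclusion $\varpi_\alpha(W)=0$ is a pointwise statement about the smooth form $\varpi_\alpha(W)\in\Omega^{4k}(M)$. So we apply the purely algebraic Theorem \ref{thm: vanishing Pontryagin forms vector spaces} separately to each $(T_pM,g_p)$, with $C=W_p$ and $\theta=\theta_p$. This gives $\varpi_\alpha(W)_p=0$ for every $p$, hence $\varpi_\alpha(W)=0$ as a form. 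No regularity or global consistency of $p\mapsto\theta_p$ is needed.

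\textbf{Step 3: pass to cohomology.} By Corollary \ref{cor: Pontryagin class manifold}, each $p_{\alpha_i}(M)$ is represented by $\varpi_{\alpha_i}(W)$. The cup product in de Rham cohomology is induced by the wedge product, so
\[
p_\alpha(M)=[\varpi_{\alpha_1}(W)\wedge\cdots\wedge\varpi_{\alpha_l}(W)]=[\varpi_\alpha(W)]=0\in H^{4k}_{dR}(M).
\]
This contradicts the assumption $p_\alpha(M)\neq 0$, which proves the corollary.

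Compactness and orientability are not used in the argument itself. They only make the hypothesis meaningful: for $M$ compact and orientable, $H^{4k}_{dR}(M)\cong\R$ via integration, so $p_\alpha(M)\neq0$ says exactly that the Pontryagin number $\int_M p_\alpha(M)$ is nonzero. For noncompact $M$ we would have $H^{4k}_{dR}(M)=0$, and the statement would be vacuous.
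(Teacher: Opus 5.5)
Your argument is correct and is essentially the paper's: the corollary is the contrapositive of Theorem \ref{thm: vanishing Pontryagin forms mfds}, whose proof applies Theorem \ref{thm: vanishing Pontryagin forms vector spaces} pointwise to each $(T_pM,g_p)$ and then passes to cohomology via Corollary \ref{cor: Pontryagin class manifold}. You also observe that no smooth (or even continuous) global $\theta\in\End(TM)$ is needed, since $u_p$ and the PE/PM alternative may vary with $p$; this usefully spells out a point the paper leaves implicit when it derives the corollary from a theorem stated with a bundle endomorphism $\theta$.
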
	
	
	\begin{rem} We present some comments on Corollary \ref{cor: nonexistence PE/PM}.
		\begin{enumerate}
			\item Corollary \ref{cor: nonexistence PE/PM} is only nontrivial for compact orientable manifolds $M$. If an $n$-dimensional manifold $M$ is either noncompact or nonorientable, then always $H^{n}_{dR}(M) = 0$ (see for example \cite[Prop.\ 5.IX]{Greub1972Connectionsetc}).
			\item The obstruction in the Pontryagin classes presented in Corollary \ref{cor: nonexistence PE/PM} does not distinguish between PE or PM Weyl curvature tensors. It would be interesting to find obstructions that can distinguish between the (non)existence of PE and PM Weyl curvature tensors separately. 
		\end{enumerate}
	\end{rem}

	We conclude this section by constructing an example of a 4-dimensional compact orientable manifold that admits Lorentzian metrics, but no such has a PE or PM Weyl curvature tensor. Recall that a compact manifold $M$ admits a Lorentzian metric if and only if its Euler characteristic, $\chi(M)$, vanishes (see e.g.\ \cite[Prop.\ 5.37]{ONeill83SemiRiemannian}). In dimension 4, our goal therefore amounts to constructing a manifold $M$ such that
	\[
	\chi(M) = 0 \quad\text{and}\quad p_1(M) \neq 0.
	\]
	
	To determine the nonvanishing of $p_1(M)$, we need a few tools from differential topology. Recall that for a compact orientable 4-dimensional manifold, integration over $M$ determines a linear isomorphism $\int_M\colon H^4_{dR}(M)\rightarrow \R$ and that under this isomorphism, the cup product on $H^2_{dR}(M)$ can be identified with a symmetric bilinear form $B$ on $H^2_{dR}(M)$, which is nondegenerate by Poincaré duality for de Rham cohomology \cite[Ch.\ 5, \S 4, 5]{Greub1972Connectionsetc}. It follows that the matrix of $B$ is diagonalizable with nonzero eigenvalues. Denote by $n_+$ the number of positive eigenvalues of $B$ and by $n_-$ the number of negative eigenvalues of $B$. Then the \emph{topological signature} of $M$ is defined as the integer $\sigma(M) = n_+ - n_-$. In dimension 4, the Hirzebruch signature theorem \cite[Thm.\ 8.2.2]{Hirzebruch1995topomethodsinalggeo} then states that
	\begin{equation}
		\label{eqL: Hirzebruch signature theorem dim 4}
		\sigma(M) = \frac{1}{3}\int_Mp_1(M),
	\end{equation} 
	and therefore $\sigma(M)\neq 0$ if and only if $p_1(M)\neq 0$ as $\int_M$ is an isomorphism.
	
	Finally, if $M_1$ and $M_2$ are 4-dimensional manifolds, denote their connected sum by $M_1 \# M_2$. It can be shown that
	\begin{align}
		\label{eq: identity Euler chars} \chi(M_1 \# M_2) &= \chi(M_1) + \chi(M_2) - 2,\\
		\label{eq: identity topo signatures} \sigma(M_1 \# M_2) &= \sigma(M_1) + \sigma(M_2),
	\end{align}
	see for example \cite[Exc.\ 3.3.6]{Hatcher2002AlgTopo} and \cite[Thm.\ 5.3]{Kirby1989topoof4mfds}, respectively.
	
	\begin{ex}[A compact manifold without PE or PM Lorentzian metrics]
		\label{ex: manifold with nonvanishing p_1}
		Consider the 4-dimensional manifolds $\dT^4$ and $\C P^2$. It can be shown that 
		\begin{align}
			\label{eq: Euler chars} \chi(\dT^4) = 0 &\qquad \chi(\C P^2) = 3\\
			\label{eq: topo signatures} \sigma(\dT^4) = 0 &\qquad \sigma(\C P^2) = 1,
		\end{align}
		where the Euler characteristics follow from the well-known cell-structures of $\dT^4$ and $\C P^2$; $\sigma(\dT^4) = 0$ follows from the fact that $\dT^4$ has a trivial tangent bundle, hence $p_1(\dT^4) = 0$; and for $\sigma(\C P^2)$, see \cite[p.\ 185]{Milnor74charclasses} and use the Hirzebruch signature theorem.	Consider the 4-dimensional manifold $M =\dT^4 \#\dT^4 \# \C P^2 \# \C P^2$. It follows from \eqref{eq: identity Euler chars} and \eqref{eq: Euler chars} that $\chi(M) = 0$ and from \eqref{eq: identity topo signatures} and \eqref{eq: topo signatures} that $\sigma(M) = 2$. So $M$ admits Lorentzian metrics, but none  that have an everywhere PE or PM Weyl curvature tensor.
	\end{ex}

	\subsection{Obstructions to the existence of Petrov types of 4-dimensional Lorentzian manifolds}
	\label{sect: nonexistence Petrov types}
	Corollary \ref{cor: nonexistence PE/PM} formulated an obstruction for the existence of metrics that globally have a PE or PM Weyl curvature tensor in terms of the nonvanishing of products of Pontryagin classes. To conclude this section, we will discuss two applications by proving related obstruction results for certain Petrov types for 4-dimensional Lorentzian manifolds and for foliations by nondegenerate umbilic hypersurfaces.
	
	The Petrov classification, introduced by Petrov \cite[Ch.\ 3]{Petrov1969Einsteinspaces}, is an algebraic classification of Weyl curvature tensors of 4-dimensional Lorentzian manifolds into several types. There are different equivalent approaches to the Petrov classification, also by Bel \cite{Bel1962radiationstates}, Debever \cite{Debever1958superenergie}, Penrose \cite{Penrose1960spinorapproachtoGR}, Pirani \cite{Pirani19578invformulationofgravrad} and others (see Batista \cite[Ch.\ 2]{Batista2013GeneralizationsPetrov} for a survey of 6 different approaches). We follow the approach by Thorpe \cite{Thorpe69Curvature}, which makes use of the curvature operator introduced in Definition \ref{def: curvature operator}.
	
	Let $(V, g)$ be a 4-dimensional Lorentzian vector space. Choose an orientation-defining unit vector $\omega \in \wedge^4V$. Let $\star \in \End(\wedge^2V)$ denote the Hodge-star, which is uniquely defined \cite[Prop.\ 5.2.2]{ONeill95Kerr} by 
	\[
	\xi\wedge\star\eta = -\langle \xi, \eta\rangle_g\omega
	\]
	for all $\xi, \eta \in \wedge^2V$. It is easy to show that $\star$ is self-adjoint and $\star^2 = -1$ \cite[Lem.\ 5.2.3]{ONeill95Kerr}. This means we can view $\wedge^2V$ as a complex vector space of dimension 3 by declaring $i:=\star$. This construction is special for 4-dimensional Lorentzian vector spaces. It is crucial that $V$ is 4-dimensional for $\star$ to be an endomorphism of $\wedge^2V$ and that $V$ is Lorentzian for $\star$ to square to $-1$.
	
	Let $W \in \cW(V, g)$ be a Weyl curvature tensor. That $W$ is trace-free with respect to $g$ implies that $[\hat{W}, \star] =0$ \cite[Cor.\ 5.3.3]{ONeill95Kerr} and, therefore, the curvature operator $\hat{W}$ is $\C$-linear. Moreover, by combining the trace-freeness of $W$ with the first Bianchi-identity, it follows that $\hat{W}$ is trace-free as $\C$-linear operator \cite[Rem.\ 5.4.1]{ONeill95Kerr}. The Petrov type of $W$ is then determined by the Jordan normal form of the curvature operator $\hat{W}$ on $\wedge^2V$. We distinguish the following different Petrov types \cite{Thorpe69Curvature}: 
	\begin{enumerate}
		\item type $O$: $\hat{W} = 0$;
		\item type $I$: $\hat{W}$ is diagonalizable with 3 different complex eigenvalues;
		\item type $D$: $\hat{W}$ is diagonalizable with 2 different complex eigenvalues; 
		\item type $II$: $\hat{W}$ is has 2 Jordan blocks with 2 different complex eigenvalues; 
		\item type $N$: $\hat{W}$ is has 2 Jordan blocks with the same complex eigenvalue (necessarily 0); and
		\item type $III$: $\hat{W}$ is has 1 Jordan block (necessarily with 0 as eigenvalue).
	\end{enumerate}
	Note that this list is exhaustive as there are at most 3 Jordan blocks, in which case $\hat{W}$ is diagonalizable, and if $\hat{W}$ has one complex eigenvalue with multiplicity 3, then this eigenvalue is necessarily 0 as $\hat{W}$ is trace-free as $\C$-linear operator.
	
	Many of the Petrov types imply the vanishing of the Pontryagin form $\varpi_1(W)$. Our results in Section \ref{sect: main result} now allow us to include certain subtypes of type $I$ to this list.
	
	\begin{thm}
		\label{thm: vanishing of Pontryagin form for Petrov types}
		Let $W$ be a Weyl curvature tensor on a 4-dimensional Lorentzian vector space $(V, g)$. Then $\varpi_1(W) = 0$, if $W$ has either of the following Petrov types:
		\begin{enumerate}
			\item type $O$,
			\item type $I$ or $D$ with real or imaginary eigenvalues,
			\item type $II$ with real or imaginary eigenvalues,
			\item type $N$,
			\item type $III$.
		\end{enumerate}
	\end{thm}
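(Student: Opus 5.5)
The plan is to handle all listed Petrov types at once. I would reduce $\varpi_1(W)$ to a single complex-spectral quantity of $\hat{W}$, namely $\operatorname{Im}\tr_{\C}(\hat{W}^2)$, and then read off its vanishing from the Jordan normal forms. The types $I$ and $D$ with real or imaginary eigenvalues could also be obtained from Theorem \ref{thm: vanishing Pontryagin forms vector spaces}, using that they are PE or PM and that the reflection $\theta_u$ has determinant $-1$ (Lemma \ref{lem: reflection in u is sa involution with det -1}). A uniform argument is cleaner, however, and also covers types $II$, $N$ and $III$.

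\emph{Step 1.} By Theorem \ref{thm: expression Pontryagin form} with $k=1$, $\varpi_1(W) = \frac{1}{(2\pi)^2}F_2(\hat{W},\hat{W})$. Since $\wedge^4V^*$ is one-dimensional, it suffices to evaluate $F_2(\hat{W},\hat{W})$ on a single oriented orthonormal frame $e_0,\dots,e_3$. Expanding Definition \ref{def: differential forms induced by exterior endomorphisms}, the sum over $S_4$ groups into pairs of complementary bivectors $e_I, e_J$ with $e_I\wedge e_J=\pm e_{0123}$. Using the defining relation $\xi\wedge\star\eta = -\langle\xi,\eta\rangle_g\omega$, I expect to obtain an identity of the form
\[
F_2(A,A)(e_0\wedge e_1\wedge e_2\wedge e_3) = c\,\tr_{\R}\bigl(A^{\dagger}\star A\bigr)
\]
for all $A\in\End(\wedge^2V)$, with a universal nonzero constant $c$. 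Here $A^{\dagger}$ is the $\langle\cdot,\cdot\rangle_g$-adjoint, and the trace is taken over the real $6$-dimensional space $\wedge^2V$, with signs coming from the indefinite metric on bivectors. For $A=\hat{W}$, which is self-adjoint and commutes with $\star$, this reduces to $c\,\tr_{\R}(\star\hat{W}^2)$.

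\emph{Step 2.} View $\wedge^2V$ as $\C^3$ with $i=\star$; then $\hat{W}^2$ is $\C$-linear. For any $\C$-linear $X$ we have $\tr_{\R}X = 2\operatorname{Re}\tr_{\C}X$, hence
\[
\tr_{\R}(\star X) = 2\operatorname{Re}\bigl(i\tr_{\C}X\bigr) = -2\operatorname{Im}\tr_{\C}X.
\]
It follows that $\varpi_1(W)=0$ if and only if $\operatorname{Im}\sum_j\lambda_j^2 = 0$, where $\lambda_1,\lambda_2,\lambda_3$ are the complex eigenvalues of $\hat{W}$ counted with multiplicity. This holds regardless of Jordan structure, since the trace only sees the diagonal.

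\emph{Step 3.} Now go through the cases. For type $O$ all $\lambda_j=0$. For types $N$ and $III$ the only eigenvalue is $0$, as noted after the list of Petrov types. For types $I$, $D$ and $II$ with real eigenvalues, every $\lambda_j^2$ is real. With purely imaginary eigenvalues, every $\lambda_j^2$ is real and nonpositive. In all cases $\operatorname{Im}\sum_j\lambda_j^2=0$, so $\varpi_1(W)=0$.

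The main obstacle is Step 1: establishing the identity with the correct combinatorics and sign conventions, namely the adjoint with respect to the indefinite form on $\wedge^2V$, the $\frac{1}{k!^2}$ normalisation, and the sign in the Hodge star convention. Fortunately only $c\neq 0$ matters, not its value. I would do the computation in the basis $e_I$ with $I$ increasing: write $A(e_I)=\sum_J A_I^{\,J}e_J$ and note that $\langle e_J, e_K\rangle_g = \pm\delta_{JK}$. The sum over $\sigma\in S_4$ then becomes $4\sum_I\langle A e_I, A e_{I^c}\rangle_g\,\sgn(I,I^c)$, and $e_{I^c}$ is, up to a sign, $\star e_I$. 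Recognising this sum as a trace of $A^{\dagger}\star A$ is then routine.
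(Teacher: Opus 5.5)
Your proof is correct, but it takes a genuinely different route from the paper. The paper does not argue uniformly. For types $O$, $N$, $III$ and for types $D$, $II$ with real or imaginary eigenvalues it only cites Avez, Zund and Porter--Thompson. For type $I$ with real or imaginary eigenvalues it invokes the (cited) fact that such $W$ is PE or PM, and then applies Theorem \ref{thm: vanishing Pontryagin forms vector spaces} to the reflection $\theta_u$. You instead use the complex structure $\star$ on $\wedge^2V$. Carrying out your Step 1 gives
\[
F_2(\hat{W},\hat{W})(e_0\wedge e_1\wedge e_2\wedge e_3) = -\tr_{\R}(\star\hat{W}^2) = 2\operatorname{Im}\tr_{\C}(\hat{W}^2),
\]
so $\varpi_1(W)$ is a nonzero multiple of the imaginary part of the quadratic Weyl invariant. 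This is essentially the mechanism behind the cited classical results.

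One small correction to Step 1. For a general $A\in\End(\wedge^2V)$ the computation yields $F_2(A,A)(e_0\wedge\cdots\wedge e_3) = -\tr_{\R}(\star A^{\dagger}A)$. This is not in general a multiple of $\tr_{\R}(A^{\dagger}\star A) = \tr_{\R}(\star AA^{\dagger})$; the two already differ for suitable rank-one $A$. They agree for self-adjoint $A$, so nothing changes for $A=\hat{W}$.

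Your route buys an exact criterion: $\varpi_1(W)=0$ if and only if $\operatorname{Im}\sum_j\lambda_j^2=0$. This shows the listed subtypes are sufficient but not necessary; for example, a type $I$ tensor with eigenvalues $1+i$, $1-i$, $-2$ has $\varpi_1(W)=0$. It also needs neither the literature on Petrov types nor the PE/PM characterization of type $I$.

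On the other hand, your argument is tied to dimension $4$ and Lorentzian signature, where $\star$ is an endomorphism of $\wedge^2V$ with $\star^2=-1$. The paper's reflection argument works in every dimension $4k$ and every signature, and it controls all top-degree products $\varpi_{\alpha}$. The theorem is deliberately presented as an application of that argument, which is why type $I$ is routed through PE/PM rather than through the eigenvalues.
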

	\begin{proof}
		For type $O$, $N$, $III$ and type $D$ and $II$ with real or imaginary eigenvalues, this was proven by Avez \cite[Cor.\ 4]{Avez70charclassesandWeyl}, Zund \cite[Thm.\ 2]{Zund1968topoaspectsBelPetrov} and Porter and Thompson \cite[Thm.\ 3--5]{Porter71topoinGR}. 
		
		If $W$ has type $I$ with real or imaginary eigenvalues, then it follows that $W$ is PE or PM, respectively, see for example \cite{Wylleman2006Completeclassification}, \cite[Rem.\ 3.8]{Hervik2013minimaltensors} or \cite[p.\ 1556]{McIntosh1994PEPMWeyl}. So $\varpi_1(W)=0$ by Theorem \ref{thm: vanishing Pontryagin forms vector spaces}.
	\end{proof}
	
	\begin{cor}[Pontryagin class obstruction for Petrov types]
		\label{cor: nonexistence Petrov types}
		Let $W$ be the Weyl curvature tensor of a 4-dimensional Lorentzian manifold $(M, g)$. Then $p_1(M) = 0$, if $W$ has either of the following Petrov types globally:
		\begin{enumerate}
			\item type $O$,
			\item type $I$ or $D$ with real or imaginary eigenvalues,
			\item type $II$ with real or imaginary eigenvalues,
			\item type $N$,
			\item type $III$.
		\end{enumerate}
				
		Conversely, if $M$ is a compact orientable 4-dimensional manifold such that $p_1(M)\neq 0$, then $M$ does not admit Lorentzian metrics that are globally of the Petrov types listed above. \qed
	\end{cor}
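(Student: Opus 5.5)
The plan is to reduce Corollary \ref{cor: nonexistence Petrov types} pointwise to Theorem \ref{thm: vanishing of Pontryagin form for Petrov types} and then pass from forms to classes via Corollary \ref{cor: Pontryagin class manifold}.

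First, fix a point $p \in M$. By hypothesis $W_p$ is a Weyl curvature tensor on the $4$-dimensional Lorentzian vector space $(T_pM, g_p)$, and it has one of the listed Petrov types. Theorem \ref{thm: vanishing of Pontryagin form for Petrov types} then gives $\varpi_1(W_p) = 0 \in \wedge^4 T_p^*M$.

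The Pontryagin form $\varpi_1(W)\in\Omega^4(M)$ is defined pointwise by Definition \ref{def: Pontrayagin form}, or equivalently via Theorem \ref{thm: expression Pontryagin form} and Remark \ref{rem: curvature operators on manifolds}. Its value at $p$ is therefore exactly $\varpi_1(W_p)$, so $\varpi_1(W)=0$ identically on $M$.

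By Corollary \ref{cor: Pontryagin class manifold} with $k=1$, we have $p_1(M) = \frac{1}{(2\pi)^2}[F_2(\hat W,\hat W)] = [\varpi_1(W)]$, which is the class of the zero form and hence $0$. There is no real obstacle here. The only point to check is that the Petrov type may vary from point to point among the listed types. This causes no problem, since the argument is pointwise and only uses that each $W_p$ falls into some listed case. In particular, no smoothness of the type is needed, because $\varpi_1(W)$ is already a smooth form.

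The converse statement is simply the contrapositive. Suppose $M$ is compact, orientable and $4$-dimensional with $p_1(M)\neq 0$. If $M$ admitted a Lorentzian metric whose Weyl tensor were everywhere of a listed type, the first part would force $p_1(M)=0$. Here we use that $p_1(M)$ is independent of the metric, as recalled in Definition \ref{def: Pontryagin class E and M}. This is a contradiction. Compactness and orientability are not needed for the implication itself; they only ensure that the hypothesis $p_1(M)\ne0$ can hold, since otherwise $H^4_{dR}(M)=0$.
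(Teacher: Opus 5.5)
Your proposal is correct and follows the same route the paper intends (the corollary is stated there without a written proof). You apply Theorem \ref{thm: vanishing of Pontryagin form for Petrov types} pointwise to get $\varpi_1(W)\equiv 0$, identify $p_1(M)=[\varpi_1(W)]$ via Corollary \ref{cor: Pontryagin class manifold}, and obtain the converse as the contrapositive using metric-independence of $p_1(M)$; your remark that the argument is pointwise, so the type may vary and no smooth reflection field is needed, is accurate.
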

	
	For example, the manifold constructed in Example \ref{ex: manifold with nonvanishing p_1} does not admit Lorentzian metrics with a Weyl curvature tensor that has any of the Petrov types listed in Corollary \ref{cor: nonexistence Petrov types} globally. 
	
	\subsection{Obstructions to the existence of foliations by nondegenerate umbilic hypersurfaces of pseudo-Riemannian manifolds}
	\label{sect: nonexistence umbilic hypersurfaces}
	Let $(M, g)$ be a pseudo-Riemannian manifold and consider a hypersurface $\Sigma \hookrightarrow M$. Denote $\sigma = \restr{g}{\Sigma}$ and assume that $\sigma$ is a pseudo-Riemannian metric. Then the tangent bundle of $M$ over $\Sigma$ splits as 
	\begin{equation}
		\label{eq: split tangent bundle}
		\restr{TM}{\Sigma} = T\Sigma\oplus \cN\Sigma,
	\end{equation}
	where $\cN\Sigma$ denotes the normal bundle of $\Sigma$ in $M$. Using the splitting \eqref{eq: split tangent bundle}, we can decompose the Levi-Civita connection of $M$ into its tangential and normal components at $\Sigma$. More precisely, let $X, Y\in \fX(\Sigma)$ and extend $X$ and $Y$ arbitrarily to vector fields $X$ and $Y$ of an open neighbourhood of $\Sigma$ in $M$. Then 
	\begin{equation}
		\label{eq: split Levi-Civita connection}
		\restr{\nabla^M_XY}{\Sigma} = \nabla^\Sigma_XY + \II(X, Y),
	\end{equation}
	where the first term on the right-hand side of \eqref{eq: split Levi-Civita connection} is the Levi-Civita connection of $(\Sigma, \sigma)$ and the second term is the \emph{second fundamental form of the embedding}.  As $\cN\Sigma$ is a line bundle, we can locally fix a unit normal vector $N$ to $\Sigma$ generating $\cN\Sigma$ and therefore the second fundamental form takes the shape 
	\begin{equation}
		\label{eq: def scalar 2nd fundamental form}
		\II(X, Y) = h(X, Y)N,
	\end{equation}
	for all $X,Y \in \fX(\Sigma)$, where $h$ is a symmetric $(0, 2)$-tensor field on $\Sigma$ \cite[Prop.\ 8.1]{Lee18Riemannmfds}.
	
	\begin{defn}
		A nondegenerate hypersurface $\Sigma \hookrightarrow M$ is \emph{umbilic} if around each point $p\in \Sigma$ there is a neighbourhood $U\subseteq \Sigma$ with a fixed unit normal vector field $N$ such that the scalar second fundamental form $h$ is of the form $h = f\sigma$ for some smooth function $f \in C^{\infty}(U)$. If on any such neighbourhood $f \equiv 0$, then $\Sigma$ is \emph{totally geodesic}.
	\end{defn}
	
	Umbilic hypersurfaces are studied both in the setting of Riemannian geometry (\cite{Cairns1990Totallyumbilicfoliations, Gomes2008Umbilicalfoliations, Langevin2008Conformalgeometryfoliations} are just a few examples) and in the setting of Lorentzian geometry, where umbilic spacelike hypersurfaces may appear as time slices of spacetimes \cite{Avalos2025energypositivityspacetimesexpanding, Brasil2025spacelikefoliations,Ferrando1992inhomogeneousspaces}. On an umbilic nondegenerate hypersurface, the Weyl curvature tensor was shown in \cite[\S 2]{Sharma2023conformalflatness} to satisfy
	\begin{equation}
		\label{eq: W is PE wrt N}
		W(N, X, Y, Z) = 0
	\end{equation}
	for all $X, Y, Z \in \fX(\Sigma)$. The proof given in \cite{Sharma2023conformalflatness} is a long and direct computation. There is a more elegant way to see this, which we present in the following theorem. Note that by similar reasoning as in part \ref{part: vanishing components C for PE/PM} of Remark \ref{rem: PE/PM}, \eqref{eq: W is PE wrt N} is equivalent to $W$ being even under $\theta \in \End(\restr{TM}{\Sigma})$ defined by $\theta(N) = -N$ and $\restr{\theta}{T\Sigma} = \id_{T\Sigma}$, which is a globally well-defined vector bundle morphism by \eqref{eq: split tangent bundle} even though $N$ is defined only locally.
		
	\begin{thm}
		\label{thm: Weyl curvature tensor PE wrt unit normal vector field}
		Suppose that $\Sigma \hookrightarrow M$ is a nondegenerate umbilic hypersurface. Then the Weyl curvature tensor $W$ of $M$ is $\theta$-even on $\Sigma$.
	\end{thm}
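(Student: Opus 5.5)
Our plan is to avoid the direct computation of \cite{Sharma2023conformalflatness}. We use the Codazzi equation together with the decomposition of Proposition \ref{prop: curvature decomposition}, and then the characterization of evenness from part \ref{part: vanishing components C for PE/PM} of Remark \ref{rem: PE/PM}.

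By that characterization (which applies verbatim to any non-null unit vector $N$ and the reflection $\theta$ along $T\Sigma$), it suffices to show $W(N,X,Y,Z)=0$ for all $X,Y,Z$ tangent to $\Sigma$. Indeed, $\theta^*W-W$ only involves components with an odd number of $N$-slots. By the antisymmetries and pair symmetry, every such component reduces to ones with exactly one $N$.

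\textbf{Step 1: Codazzi equation.} For $X,Y,Z\in\fX(\Sigma)$, with $\eps=g(N,N)=\pm1$, the Codazzi equation gives, up to a sign fixed by conventions,
\[
Rm(X,Y,Z,N)=\eps\bigl((\nabla^\Sigma_X h)(Y,Z)-(\nabla^\Sigma_Y h)(X,Z)\bigr).
\]
Inserting $h=f\sigma$ and $\nabla^\Sigma\sigma=0$ yields
\[
Rm(X,Y,Z,N)=\eps\bigl(X(f)\sigma(Y,Z)-Y(f)\sigma(X,Z)\bigr),
\]
so the mixed components of $Rm$ are of ``pure trace'' type $\sigma\wedge df$.

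\textbf{Step 2: the Schouten part.} Compute $(P\owedge g)(X,Y,Z,N)$ directly from the definition of the Kulkarni--Nomizu product. Since $g(X,N)=g(Y,N)=0$, only two terms survive:
\[
(P\owedge g)(X,Y,Z,N)=\pm\bigl(P(X,N)g(Y,Z)-P(Y,N)g(X,Z)\bigr).
\]
Next compute $P(X,N)=\frac{1}{n-2}\Ric(X,N)$, using that $g(X,N)=0$ removes the scalar term. Take $\Ric(X,N)$ as the trace over an orthonormal frame $(e_1,\ldots,e_{n-1},N)$ adapted to $\Sigma$; the $N$-term drops by antisymmetry. Substituting Step 1 gives
\[
\Ric(X,N)=\pm\eps\sum_i \eps_i\bigl(e_i(f)\sigma(X,e_i)-X(f)\sigma(e_i,e_i)\bigr)=\pm\eps(1-(n-1))X(f)=\mp\eps(n-2)X(f).
\]
Hence $P(X,N)=c\,X(f)$ for a universal constant $c=\pm\eps$. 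Then $(P\owedge g)(X,Y,Z,N)$ has exactly the form $\pm\eps(X(f)\sigma(Y,Z)-Y(f)\sigma(X,Z))$.

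\textbf{Step 3: conclusion.} We have $W=Rm-P\owedge g$ by Definition \ref{def: curvature components}. The two expressions from Steps 1 and 2 coincide, so $W(X,Y,Z,N)=0$, and by the symmetries also $W(N,X,Y,Z)=0$. Evenness under $\theta$ follows by the Remark. The result is pointwise on $\Sigma$ and independent of the local choice of $N$, since $\theta$ is globally defined via \eqref{eq: split tangent bundle}.

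The main obstacle is bookkeeping of signs: the curvature sign convention of Lee, the Kulkarni--Nomizu sign, $\eps$, and the contraction $\tr^{1,4}$. Each must come out so that the two contributions cancel rather than add. We would fix this by checking against a model case, e.g.\ a round sphere $\Sigma$ in flat space, where $W=0$ and $f$ is constant, and a nonconstant-$f$ conformally flat example. A cleaner alternative, if the signs prove stubborn, is to observe conceptually that the mixed part of any algebraic curvature tensor of the form $\sigma\wedge df$ is pure trace. It therefore lies in $\cW(V,g)^\bot$, so its Weyl projection vanishes, and the equivariance from Remark \ref{rem: refined curvature decomposition theorem} finishes the argument.
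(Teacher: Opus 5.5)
Your proof is correct, but it takes a different route from the paper. The paper never touches the Schouten tensor. It uses Lemma \ref{lem: umbilic is conformally totally geodesic} to find $\phi$ such that $\Sigma$ is totally geodesic for $\tilde g=e^{2\phi}g$, so the Codazzi equation gives $\widetilde{Rm}(X,Y,Z,\tilde N)=0$. Thus $\widetilde{Rm}$ is $\theta$-even, hence so is $\widetilde W$ by Lemma \ref{lem: curvature tensor even/odd implies Weyl curvature tensor is}, and conformal invariance of the Weyl tensor transfers this to $W$.

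You instead stay with $g$ and check directly that the components $Rm(X,Y,Z,N)=\eps\bigl(X(f)\sigma(Y,Z)-Y(f)\sigma(X,Z)\bigr)$ are cancelled by those of $P\owedge g$. The key is that the factor $n-2$ produced by the trace cancels the $\frac{1}{n-2}$ in $P$. Your argument is pointwise and self-contained: it needs no conformal factor, tubular neighbourhood or partition of unity, and no appeal to conformal invariance of $W$, which is itself proved by a comparable curvature computation. The paper's argument is more conceptual: umbilicity is the conformally invariant form of being totally geodesic, and $W$ only sees the conformal class.

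Your ``cleaner alternative'' is also valid, and is in effect the pointwise algebraic counterpart of the paper's argument. Indeed $\tfrac12(Rm-\theta^*Rm)=Q\owedge g$ with $Q=\alpha\otimes N^\flat+N^\flat\otimes\alpha$, where $\alpha=df$ on $T\Sigma$ and $\alpha(N)=0$, and $W_{Q\owedge g}=0$.

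Finally, the sign bookkeeping is not a real obstacle and needs no model-case check. In Lee's conventions one gets $\Ric(X,N)=\eps(n-2)X(f)$ and $P(X,N)=\eps X(f)$, so the cancellation is exact. More robustly, whatever the signs, the mixed components of $W$ have the form $\beta(X)\sigma(Y,Z)-\beta(Y)\sigma(X,Z)$ for a covector $\beta$ on $T\Sigma$ (a multiple of $df$ minus $P(\cdot,N)$). Since $\tr^{1,4}_gW(X,N)=(n-2)\beta(X)$, trace-freeness of $W$ alone forces $\beta=0$ for $n\geq 3$.
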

	\begin{proof}
		Let $p \in \Sigma$ be an arbitrary point. The question of whether or not $W$ is $\theta$-even at $p$, depends only pointwise on $W$. Therefore, without loss of generality, we may assume (after possibly passing to a neighbourhood of $p$) that a unit normal vector field $N$ of $\Sigma$ exists globally on $\Sigma$ and therefore $\cN\Sigma \cong \Sigma \times \R$.
		
		By the discussion above, we need to show that 
		\[
		W(N, X, Y, Z) = 0
		\]
		for all $X, Y, Z \in \fX(\Sigma)$ and for the fixed unit normal vector field $N$. Recall that the Codazzi equation reads
		\begin{equation}
			\label{eq: Codazzi}
			Rm(X, Y, Z, N) = g(N, N)(Dh)(Z, X, Y),
		\end{equation}
		where $(Dh)(Z, X, Y) = -(\nabla h)(Z, X, Y) + (\nabla h)(Z, Y, X)$ is the \emph{exterior covariant derivative} of $h$ \cite[p.\ 236]{Lee18Riemannmfds}. Since $\Sigma$ is an umbilic hypersurface in $M$ with a global unit normal vector field $N$, there exists a smooth function $\phi \in C^{\infty}(M)$ such that $\Sigma$ is totally geodesic with respect to the conformally equivalent metric $\tilde{g} = e^{2\phi}g$ and $\tilde{h}\equiv 0$, where $\tilde{h}$ is the second fundamental form of $\Sigma$ with respect to $\tilde{g}$. This is shown, for example, in \cite{curry2015introductionconformalgeometrytractor} using tractor calculus, but a more elementary proof of this statement is given in Lemma \ref{lem: umbilic is conformally totally geodesic} after the proof of this theorem.
		
		Let $\tilde{N} = e^{-\phi}N$ be the rescaled unit normal vector field to $\Sigma$ with respect to $\tilde{g}$ and let $\widetilde{Rm}$ and $\widetilde{W}$ denote the Riemann and Weyl curvature tensors of $\tilde{g}$. By \eqref{eq: Codazzi}, we obtain
		\[
		\widetilde{Rm}(X, Y, Z, \tilde{N}) = \tilde{g}(\tilde{N}, \tilde{N})(D\tilde{h})(Z, X, Y) = 0.
		\]
		As $\tilde{N}$ and $N$ are scalar multiples of one another, it follows that $\widetilde{Rm}$ is $\theta$-even on $\Sigma$ by similar reasoning as in part \ref{part: vanishing components C for PE/PM} of Remark \ref{rem: PE/PM}. Hence Lemma \ref{lem: curvature tensor even/odd implies Weyl curvature tensor is} implies that $\widetilde{W}$ is also $\theta$-even on $\Sigma$. Since the Weyl curvature tensor is conformally invariant, we conclude that $W$ is $\theta$-even on $\Sigma$.
	\end{proof}
	
	\begin{lem}
		\label{lem: umbilic is conformally totally geodesic}
		Suppose that $\Sigma \hookrightarrow M$ is a nondegenerate umbilic hypersurface with a global unit normal vector field $N$. Then there exists a smooth function $\phi \in C^{\infty}(M)$ such that $\Sigma$ is totally geodesic with respect to the conformally equivalent metric $\tilde{g} = e^{2\phi}g$.
	\end{lem}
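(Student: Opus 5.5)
We recall how the second fundamental form transforms under a conformal change $\tilde g = e^{2\phi}g$, and then choose $\phi$ so that the umbilic factor $f$ is cancelled. The Levi-Civita connections are related by
\[
\tilde\nabla_XY = \nabla_XY + X(\phi)Y + Y(\phi)X - g(X,Y)\,\operatorname{grad}_g\phi .
\]
For $X,Y$ tangent to $\Sigma$ we take the normal component of this identity. The terms $X(\phi)Y$ and $Y(\phi)X$ are tangent to $\Sigma$, so they contribute nothing. The normal part of $g(X,Y)\operatorname{grad}_g\phi$ equals $g(X,Y)\,\eps\,(N\phi)\,N$, where $\eps = g(N,N)=\pm1$. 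This gives
\[
\widetilde{\II}(X,Y) = \II(X,Y) - \eps\,(N\phi)\,g(X,Y)\,N = \bigl(f - \eps\, N\phi\bigr)\,\sigma(X,Y)\,N .
\]
The normal-bundle projection is the same for $g$ and $\tilde g$, since orthogonality is conformally invariant. Writing $\widetilde{\II} = \tilde h\,\tilde N$ with $\tilde N = e^{-\phi}N$, we get $\tilde h = e^{\phi}(f-\eps N\phi)\sigma$. Hence $\Sigma$ is $\tilde g$-totally geodesic exactly when $N\phi = \eps f$ along $\Sigma$.

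It remains to construct $\phi \in C^\infty(M)$ with $\restr{N\phi}{\Sigma} = \eps f$. Since $\Sigma$ carries a global unit normal $N$, the normal exponential map $(p,t)\mapsto \exp_p(tN_p)$ is a diffeomorphism from an open neighbourhood $\mathcal{O}$ of $\Sigma\times\{0\}$ in $\Sigma\times\R$ onto a tubular neighbourhood $U$ of $\Sigma$. For noncompact $\Sigma$ we let the width of $\mathcal{O}$ vary with the point. Here we assume $\Sigma$ is embedded; for a merely immersed hypersurface we work locally, which suffices for the pointwise application in Theorem \ref{thm: Weyl curvature tensor PE wrt unit normal vector field}. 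On $U$ we set $\phi_0(\exp_p(tN_p)) = \eps f(p)\,t$. Then $\phi_0$ vanishes on $\Sigma$, and its derivative along the curve $t\mapsto\exp_p(tN_p)$ at $t=0$ is $\eps f(p)$, so $N\phi_0 = \eps f$ on $\Sigma$.

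Finally we choose a bump function $\chi\in C^\infty(M)$ with $\operatorname{supp}\chi\subset U$ and $\chi\equiv1$ on a smaller neighbourhood of $\Sigma$. Such a $\chi$ exists if $\Sigma$ is a closed embedded submanifold. Otherwise we replace $M$ by the open set $U$, which is harmless because the application in Theorem \ref{thm: Weyl curvature tensor PE wrt unit normal vector field} only needs the statement near a point. We then put $\phi = \chi\phi_0$, extended by zero outside $U$. Near $\Sigma$ we have $\phi=\phi_0$, so $N\phi = \eps f$ on $\Sigma$, and the transformation formula gives $\tilde h\equiv 0$.

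The main obstacle is the global smooth extension. It needs a tubular neighbourhood with a smooth cutoff, which works cleanly only when $\Sigma$ is properly embedded. Otherwise one must pass to an open neighbourhood of $\Sigma$, or argue locally, as the paper already permits in the proof of the theorem. The conformal transformation formula itself is routine.
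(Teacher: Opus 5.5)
Your proposal is correct and follows the paper's proof essentially step for step. The conformal change formula for the Levi-Civita connection gives $e^{-\phi}\tilde h=(f-\eps\, N\phi)\,\sigma$, and the function $\phi_0(\exp_p(tN_p))=\eps f(p)\,t$ on a normal tubular neighbourhood gives $N\phi_0=\eps f$ along $\Sigma$.

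Your caveat about the final cutoff is justified: the paper's partition-of-unity extension to all of $M$ tacitly requires $\Sigma$ to be closed in $M$. For example, take infinitely many disjoint small round spheres in Euclidean $\R^n$ accumulating at a point; then no $\phi\in C^\infty(\R^n)$ works, since $N\phi=\eps f$ would be unbounded near that point. Restricting to a neighbourhood of $\Sigma$, as you propose, is harmless for the pointwise use in Theorem~\ref{thm: Weyl curvature tensor PE wrt unit normal vector field}.
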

	\begin{proof}
		Let $\phi \in C^\infty(M)$ be an arbitrary function and $\tilde{g} = e^{2\phi}g$ the corresponding conformally equivalent metric. Let $\tilde{N} = e^{-\phi}N$ be the rescaled unit normal vector field to $\Sigma$ with respect to $\tilde{g}$. We will first relate the scalar second fundamental forms $h$ of $g$ and $\tilde{h}$ of $\tilde{g}$.
		Note that the Levi-Civita connection $\nabla$ of the metric $g$ and the Levi-Civita connection $\widetilde{\nabla}$ of the metric $\tilde{g}$ are related by \cite[Prop.\ 7.29]{Lee18Riemannmfds}
		\begin{equation}
			\label{eq: relation LC connections}
			\widetilde{\nabla}_XY = \nabla_XY + Y(\phi)X+X(\phi)Y-g(X, Y)\nabla\phi
		\end{equation}
		for all $X, Y \in \fX(M)$. By using \eqref{eq: split Levi-Civita connection} and \eqref{eq: def scalar 2nd fundamental form} twice on \eqref{eq: relation LC connections} and comparing normal components, we find that the scalar second fundamental forms of $\Sigma$ with respect to the metrics $\tilde{g}$ and $g$ are related by 
		\begin{equation}
			\label{eq: relation scalar second fundamental forms 1}
			e^{-\phi}\tilde{h}(X, Y) = h(X, Y)-g(N, N)g(\nabla\phi, N)\sigma(X, Y)
		\end{equation}
		for all $X, Y \in \fX(\Sigma)$. Since $\Sigma$ is umbilic with respect $g$, it follows that there is a smooth function $f \in C^{\infty}(\Sigma)$ such that $h = f\sigma$. Substituting into \eqref{eq: relation scalar second fundamental forms 1}, we find that 
		\begin{equation}
			\label{eq: relation scalar second fundamental forms 2}
			e^{-\phi}\tilde{h}(X, Y) = \left(f-g(N,N)g(\nabla\phi, N)\right)\sigma(X, Y).
		\end{equation}
		From \eqref{eq: relation scalar second fundamental forms 2}, it follows that it is left to show that for all $f \in C^\infty(\Sigma)$, we can find a $\phi \in C^\infty(M)$ such that 
		\begin{equation}
			\label{eq: totally geodesic requirement}
			g(\nabla\phi, N) = g(N,N)f 
		\end{equation}
		on $\Sigma$. 
		
		Indeed, let $U \subseteq M$ be a tubular neighbourhood of $\Sigma$ with domain $\Sigma\times \{0\} \subset \Sigma_0 \subset \Sigma \times \R$ such that the exponential map
		\[
		E\colon \Sigma_0\rightarrow U\qquad (p, t)\mapsto \exp_p(tN)
		\]
		is a diffeomorphism. Note that using this tubular neighbourhood, $N = \restr{\partial_t}{\Sigma}$.
		Consider the function $\phi_0\colon U\rightarrow \R$ defined by 
		\[
		\phi_0(E(p, t)) = g(N,N)f(p)t.
		\]
		We see that 
		\begin{equation*}
			g(\nabla\phi_0, N) = \restr{g(\nabla\phi_0, \partial_t)}{\Sigma} = \restr{\partial_t(\phi_0)}{\Sigma} = g(N, N)f
		\end{equation*}
		and therefore $\phi_0$ satisfies \eqref{eq: totally geodesic requirement}. So we obtain $\phi$ by extending $\phi_0$ to a function on all of $M$ by using a partition of unity that does not change $\phi_0$ in a neighbourhood of $\Sigma$.
	\end{proof}
	
	Combining Theorems \ref{thm: vanishing Pontryagin forms vector spaces} and \ref{thm: Weyl curvature tensor PE wrt unit normal vector field}, we find that for all multi-indices $\alpha$ with $|\alpha| = k$, the $4k$-form $\varpi_{\alpha}(W)$ vanishes at a nondegenerate umbilic hypersurface of a $4k$-dimensional pseudo-Riemannian manifold.
	
	\begin{thm}[Pontryagin forms vanish at nondegenerate umbilic hypersurfaces]
		\label{thm: products of Pontryagin forms vanish at umbilic hypersurfaces}
		Let $(M, g)$ be a $4k$-dimensional pseudo-Riemannian manifold. Let $\Sigma \hookrightarrow M$ be a nondegenerate umbilic hypersurface. Then for all multi-indices $\alpha$ with $|\alpha| = k$, the $4k$-form $\varpi_{\alpha}(W)$ vanishes at $\Sigma \subseteq M$. \qed
	\end{thm}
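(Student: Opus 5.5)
The plan is to work pointwise at a fixed $p \in \Sigma$ and reduce to the algebraic vanishing result, Theorem \ref{thm: vanishing Pontryagin forms vector spaces}, applied to the scalar product space $(T_pM, g_p)$.

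First, we need a suitable isometry at $p$. Since $\Sigma$ is nondegenerate, the splitting \eqref{eq: split tangent bundle} holds at $p$, so we can take $\theta_p \in \End(T_pM)$ with $\theta_p(N) = -N$ and $\restr{\theta_p}{T_p\Sigma} = \id$. This is the reflection along the hyperplane $T_p\Sigma = N^{\bot}$. Because $g(N,N) = \pm 1 \neq 0$ and $T_p\Sigma$ is $g$-orthogonal to $N$, $\theta_p$ preserves $g_p$; the argument is the same as in Lemma \ref{lem: reflection in u is sa involution with det -1}, and it does not use that $N$ is timelike. In a basis adapted to the splitting, $\theta_p$ is diagonal with entries $(1,\ldots,1,-1)$, so $\det(\theta_p) = -1$. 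The choice of sign of $N$ is irrelevant, since $-N$ defines the same reflection.

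Second, we need the symmetry of the curvature. By Theorem \ref{thm: Weyl curvature tensor PE wrt unit normal vector field}, $W_p$ is $\theta_p$-even. Since $\dim T_pM = 4k$, Theorem \ref{thm: vanishing Pontryagin forms vector spaces} then gives $\varpi_\alpha(W_p) = 0 \in \wedge^{4k}T_p^*M$ for every multi-index $\alpha$ with $|\alpha| = k$.

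Third, we must check that the Pontryagin form $\varpi_\alpha(W)$ of the manifold, evaluated at $p$, agrees with the algebraic Pontryagin form $\varpi_\alpha(W_p)$ of Definition \ref{def: Pontrayagin form}. This holds because the differential form is defined pointwise from the curvature matrix, and the wedge product of forms is computed pointwise. Since $p \in \Sigma$ was arbitrary, $\varpi_\alpha(W)$ vanishes along $\Sigma$.

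There is no genuine obstacle here. The only point needing care is that the reflection is a well-defined isometry for a possibly spacelike or timelike normal in arbitrary signature, and this follows from nondegeneracy of $\Sigma$.
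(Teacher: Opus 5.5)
Your proposal is correct and follows essentially the paper's route. The paper obtains this theorem by combining Theorem~\ref{thm: Weyl curvature tensor PE wrt unit normal vector field} ($W$ is $\theta$-even on $\Sigma$) with the algebraic vanishing result, Theorem~\ref{thm: vanishing Pontryagin forms vector spaces}, applied pointwise on $(T_pM, g_p)$; your proposal does the same, and also checks explicitly that $\theta_p$ is an isometry with determinant $-1$.
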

	
	Now suppose that a pseudo-Riemannian manifold $(M,g)$ is foliated by nondegenerate umbilic hypersurfaces. Then every point of $M$ lies in a unique leaf of the foliation, which is a nondegenerate umbilic hypersurface. So Theorem \ref{thm: products of Pontryagin forms vanish at umbilic hypersurfaces} yields that for all multi-indices $\alpha$ with $|\alpha| = k$, the $4k$-form $\varpi_{\alpha}(W)$ vanishes everywhere on $M$. This gives an obstruction in the Pontryagin classes to the existence of nondegenerate umbilic foliations by hypersurfaces. To the best of the author's knowledge, this is the first of such an obstruction using Pontryagin classes. However, some nonexistence results are known. In \cite{deAlmeida2017umbilicfolswithintegrablenormalbundle} it is shown that odd-dimensional spheres do not admit umbilic foliations with an integrable normal bundle. More generally, in \cite{Langevin2008Conformalgeometryfoliations} some nonexistence results are given for foliations by umbilic hypersurfaces for compact Riemannian manifolds of constant sectional curvature. However, both \cite{deAlmeida2017umbilicfolswithintegrablenormalbundle} and \cite{Langevin2008Conformalgeometryfoliations} do not use methods based on characteristic classes. We conclude the paper with the following obstruction to the existence of foliations by nondegenerate umbilic hypersurfaces.
	
	\begin{thm}[Pontryagin class obstruction for nondegenerate umbilic foliations]
		\label{thm: products of Pontryagin classes for umbilic foliations}
		Let $(M, g)$ be a $4k$-dimensional pseudo-Riemannian manifold. If there exists a foliation of $(M, g)$ by nondegenerate umbilic hypersurfaces. Then for all multi-indices $\alpha$ with $|\alpha| = k$, we have $p_{\alpha}(M) = 0$.
		
		Conversely, if $M$ is compact and orientable and there exists a multi-index $\alpha$ with $|\alpha|=k$ such that $p_{\alpha}(M)\neq 0$, then there exist no pseudo-Riemannian metric on $M$ and codimension 1 foliation of $M$ for which all leaves of the foliation are nondegenerate and umbilic. \qed
	\end{thm}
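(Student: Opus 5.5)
The plan is to reduce everything to the pointwise vanishing already established in Theorem \ref{thm: products of Pontryagin forms vanish at umbilic hypersurfaces}, and then pass to cohomology through Corollary \ref{cor: Pontryagin class manifold}.

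Assume $(M,g)$ carries a codimension one foliation $\cF$ whose leaves are nondegenerate umbilic hypersurfaces. Fix an arbitrary point $p\in M$ and let $L$ be the leaf through $p$. A leaf is in general only an injectively immersed hypersurface, not an embedded one. We therefore replace $L$ by a plaque $\Sigma\subset L$ of a foliation chart around $p$. This $\Sigma$ is an embedded hypersurface of an open neighbourhood $U$ of $p$, and it is still nondegenerate and umbilic, because both properties are local.

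We now apply Theorem \ref{thm: products of Pontryagin forms vanish at umbilic hypersurfaces} to $(U, \restr{g}{U})$ and $\Sigma$. It gives $\varpi_{\alpha}(W)_p=0$ for every multi-index $\alpha$ with $|\alpha|=k$. The form $\varpi_\alpha(W)$ is built pointwise from $W_p$, and $W$ restricted to $U$ is the Weyl tensor of $\restr{g}{U}$, so this is really the value of the global form at $p$. Since $p$ was arbitrary, $\varpi_{\alpha}(W)=0$ in $\Omega^{4k}(M)$.

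By Corollary \ref{cor: Pontryagin class manifold}, $\frac{1}{(2\pi)^{2j}j!^2}F_{2j}(\hat W^{*j},\hat W^{*j})$ represents $p_j(M)$. Theorem \ref{thm: expression Pontryagin form} identifies this form with $\varpi_j(W)$. Wedge products of closed forms represent cup products, so
\[
p_\alpha(M)=[\varpi_{\alpha_1}(W)\wedge\cdots\wedge\varpi_{\alpha_l}(W)]=[0]=0.
\]
This is the same argument as in the proof of Theorem \ref{thm: vanishing Pontryagin forms mfds}.

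The converse is the contrapositive. If $M$ is compact and orientable and $p_\alpha(M)\neq 0$ for some $|\alpha|=k$, then no pair consisting of a pseudo-Riemannian metric and a codimension one foliation with nondegenerate umbilic leaves can exist, since by the first part it would force $p_\alpha(M)=0$. Compactness and orientability only serve to make the hypothesis $p_\alpha(M)\neq0$ possible, as $H^{4k}_{dR}(M)$ vanishes otherwise.

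The only delicate point is the embedded-versus-immersed issue for the leaves. Passing to plaques in foliation charts settles it, because Theorem \ref{thm: Weyl curvature tensor PE wrt unit normal vector field} and Lemma \ref{lem: umbilic is conformally totally geodesic} are local statements that may be applied on the open set $U$.
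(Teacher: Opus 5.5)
Your proof is correct and follows the paper's argument. Every point lies on a leaf, Theorem \ref{thm: products of Pontryagin forms vanish at umbilic hypersurfaces} gives the pointwise vanishing of $\varpi_\alpha(W)$, Corollary \ref{cor: Pontryagin class manifold} passes to cohomology, and the converse is the contrapositive. Your restriction to plaques of a foliation chart adds a little rigor the paper skips (it treats leaves directly as hypersurfaces), and it is legitimate because all the ingredients are local.
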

		
	\appendix
	\section{Proof of Theorem \ref{thm: expression Pontryagin form}}
	\label{app: proof expression Pontryagin forms}
	In this appendix we will provide a proof of Theorem \ref{thm: expression Pontryagin form}. For which we first introduce the following useful notation. Recall that $[n] = \{1, \ldots, n\}$. We denote by $\cP([n])$ the powerset of $[n]$ and by $\cP([n])_k$ the set of subsets of $[n]$ with  $k$ elements. If we consider a multi-index $I = (i_1, \ldots, i_k)\in [n]^k$ and $\sigma \in S_I$ is a permutation of $I$, then we denote by $I_{\sigma} = (\sigma(i_1), \ldots, \sigma(i_k))$ the permuted multi-index. Also, if $T \in (V^*)^{\otimes k}$ is a $k$-multilinear map and $\sigma \in S_k$ is a permutation on $k$ elements. We define $T_\sigma \in(V^*)^{\otimes k}$ by 
	\[
	T_\sigma(x_1,\ldots, x_k) = T(x_{\sigma(1)},\ldots, x_{\sigma(k)}),
	\]
	for all $x_1, \ldots, x_k \in V$.
	
	If $e$ is an orthonormal basis for $V$, then we define the \emph{causal character signs} of the basis vectors by $\eps_i = g(e_i, e_i) \in \{\pm 1\}$. More generally, a multi-index $I = (i_1, \ldots, i_k)\subseteq [n]$ of length $k$ defines a $k$-vector $e_I = e_{i_1}\wedge\cdots\wedge e_{i_k}$. We define the causal character sign of $e_I$ to be $\eps_I = \langle e_I, e_I\rangle_g$. In other words,
	\[
	\eps_I = \prod_{i\in I}\eps_i,
	\]
	if $I$ has no repeating indices, and $\eps_I = 0$ otherwise. Note this does not depend on the order of $(i_1, \ldots, i_k)$, i.e.\ for all $\sigma \in S_I$, we have $\eps_I = \eps_{I_\sigma}$.
	
	Choosing a basis $e = (e_1, \ldots, e_n)$ for $V$ and an algebraic curvature tensor $C\in \cC(V)$, we obtain the 2-forms $\tensor{\Omega}{^i^j}$ defined by 
	\begin{equation}
		\label{eq: 2-forms curvature operator}
		\hat{C}(\xi) = \sum_{i, j\in[n]}\tensor{\Omega}{^i^j}(\xi)e_i\wedge e_j,
	\end{equation}
	which can be chosen in such a way that $\tensor{\Omega}{^i^j} = -\tensor{\Omega}{^j^i}$.  It is easy to show that the 2-forms $\tensor{\Omega}{^i^j}$ of the curvature operator as defined in \eqref{eq: 2-forms curvature operator} and the 2-forms of the curvature matrix as defined in \eqref{eq: 2-forms curvature matrix} are related via 
	\begin{equation}
		\label{eq: relation 2-forms}
		\tensor{\Omega}{^i^j} = \frac{1}{2}\eps_i\tensor{\Omega}{_i^j}\quad \text{(no summation over $i$)}.
	\end{equation}
	
	The outline of the proof is as follows. In Lemma \ref{lem: higher curvature operators}, we first express the higher curvature operators of $C$ in terms of the 2-forms introduced in \eqref{eq: 2-forms curvature operator}. This way, we can also rewrite the $4k$-form $F_{2k}(\hat{C}^{*k}, \hat{C}^{*k})$ from Theorem \ref{thm: expression Pontryagin form} in terms of the 2-forms from \eqref{eq: 2-forms curvature operator}. Then in Lemma \ref{lem: expression sigma polynomials}, we will give an explicit expression for the polynomials $\sigma_k^{(n)}$. Using the obtained formulas, we can give an explicit expression for the Pontryagin form $\varpi_k(C)$, which after some algebraic manipulations is seen to equal the right-hand side of Theorem \ref{thm: expression Pontryagin form}
		
	\begin{lem}
		\label{lem: higher curvature operators}
		For all $1\leq k\leq \fl{\frac{n}{2}}$,
		\begin{equation}
			\label{eq: expression higher curvature operators}
			\hat{C}^{*k} = \sum_{I \in [n]^{2k}}(\tensor{\Omega}{^{i_1}^{i_2}}\wedge\cdots\wedge \tensor{\Omega}{^{i_{2k-1}}^{i_{2k}}})\otimes e_I.
		\end{equation}
	\end{lem}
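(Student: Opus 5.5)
The plan is to argue by induction on $k$, using the expansion formula \eqref{eq: expansion * product} from Lemma \ref{lem: properties *} as the only real tool.

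\textbf{Base case $k=1$.} Here $\hat{C}^{*1}=\hat{C}$. By \eqref{eq: 2-forms curvature operator}, viewing each $\tensor{\Omega}{^i^j}$ as an element of $\wedge^2V^*$ (a linear functional on $\wedge^2V$), we get
\[
\hat{C}=\sum_{i,j\in[n]}\tensor{\Omega}{^i^j}\otimes e_i\wedge e_j=\sum_{I\in[n]^2}\tensor{\Omega}{^{i_1}^{i_2}}\otimes e_I,
\]
which is \eqref{eq: expression higher curvature operators} for $k=1$. The only thing to check is the identification of $\End(\wedge^2V)$ with $\wedge^2V^*\otimes\wedge^2V$. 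Under it, an endomorphism $A=\sum\alpha^I\otimes e_I$ acts by $\xi\mapsto\sum\alpha^I(\xi)e_I$. This is the convention implicit in Lemma \ref{lem: properties *}, so the base case is immediate.

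\textbf{Inductive step.} Assume the formula holds for $k-1$, with $2k\le n$. Write $\hat{C}^{*k}=\hat{C}^{*(k-1)}*\hat{C}$, which is legitimate by associativity in Lemma \ref{lem: properties *}. Apply \eqref{eq: expansion * product} with
\[
A=\hat{C}^{*(k-1)},\quad \alpha^{I'}=\tensor{\Omega}{^{i_1}^{i_2}}\wedge\cdots\wedge\tensor{\Omega}{^{i_{2k-3}}^{i_{2k-2}}},\qquad B=\hat{C},\quad \beta^{J}=\tensor{\Omega}{^{j_1}^{j_2}}.
\]
This gives
\[
\hat{C}^{*k}=\sum_{I'\in[n]^{2k-2},\,J\in[n]^2}\bigl(\alpha^{I'}\wedge\beta^J\bigr)\otimes(e_{I'}\wedge e_J).
\]
Concatenating $I=(I',J)\in[n]^{2k}$ gives $e_{I'}\wedge e_J=e_I$ and $\alpha^{I'}\wedge\beta^J=\tensor{\Omega}{^{i_1}^{i_2}}\wedge\cdots\wedge\tensor{\Omega}{^{i_{2k-1}}^{i_{2k}}}$, which is the claimed formula.

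\textbf{Main obstacle.} The only point needing care is that \eqref{eq: expansion * product} is stated for \emph{arbitrary} (non-unique) expansions over all multi-indices, including non-increasing ones and ones with repeated indices. So no normalization of the representation is needed. I would double-check this by re-deriving \eqref{eq: expansion * product} directly from Definition \ref{def: multiplication of exterior endomorphisms} for such expansions. The factor $\frac{1}{k!\,l!}$ together with the signed sum over $S_{k+l}$ is exactly what turns $\alpha^I(x_{\sigma(1)},\dots)\beta^J(\dots)$ into $(\alpha^I\wedge\beta^J)(x_1,\dots,x_{k+l})$, under the determinant-type convention for evaluating wedge products of forms.

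That derivation is linear in the expansions and uses no uniqueness, so the induction goes through without further work.
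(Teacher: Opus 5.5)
Your proposal is correct and is essentially the paper's proof: induction on $k$, with the base case read off from the defining expansion \eqref{eq: 2-forms curvature operator} of $\hat{C}$, and the inductive step obtained by applying the expansion formula \eqref{eq: expansion * product} to $A=\hat{C}^{*(k-1)}$ and $B=\hat{C}$ (the paper phrases this as the step from $k$ to $k+1$). Your remark that \eqref{eq: expansion * product} holds for arbitrary, non-unique expansions, because its derivation is linear and uses the determinant convention for wedge products, is the right justification for the step.
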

	\begin{proof}
		This follows directly by induction. The base case ($k=1$) follows by definition of the 2-forms of the curvature operator in \eqref{eq: 2-forms curvature operator} and the induction step is performed by applying \eqref{eq: expansion * product} to $A = \hat{C}^{*k}$ and $B = \hat{C}$.
	\end{proof}
	
	We proceed to give an explicit formula for the polynomials $\sigma^{(n)}_k$.
	
	\begin{lem}
		\label{lem: expression sigma polynomials}
		Let $X = (\tensor{x}{_i^j})_{i, j \in [n]} \in M_n(\R)$ be a square matrix. Then 
		\begin{equation}
			\sigma_{k}^{(n)}(X) = \frac{1}{k!}\sum_{I, J\in [n]^k}\sgn(I;J)\tensor{x}{_{i_1}^{j_1}}\cdots\tensor{x}{_{i_{k}}^{j_{k}}},
		\end{equation}
		where $\sgn(I;J)$ equals $\sgn(\sigma)$ if $I$ and $J$ have no repeated elements and $J = I_\sigma$, and equals 0 otherwise.
	\end{lem}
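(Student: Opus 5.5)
The plan is to expand the determinant in its Leibniz form and collect the coefficient of $t^k$. First I write
\[
\det(I+tX) = \sum_{\tau \in S_n}\sgn(\tau)\prod_{i\in[n]}\bigl(\delta_i^{\tau(i)} + t\,\tensor{x}{_i^{\tau(i)}}\bigr).
\]
Expanding each product, a monomial of degree $k$ in $t$ comes from choosing a subset $S\in\cP([n])_k$ of rows where the $t\,\tensor{x}{_i^{\tau(i)}}$ factor is taken. On the complement $[n]\setminus S$ the factor $\delta_i^{\tau(i)}$ must be nonzero, so $\tau$ fixes $[n]\setminus S$ pointwise. Hence $\tau$ restricts to a permutation of $S$, and $\sgn(\tau)$ equals the sign of that restriction. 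This gives
\[
\sigma_k^{(n)}(X) = \sum_{S\in\cP([n])_k}\ \sum_{\rho\in S_S}\sgn(\rho)\prod_{i\in S}\tensor{x}{_i^{\rho(i)}},
\]
which is the usual statement that $\sigma_k^{(n)}$ is the sum of the principal $k\times k$ minors.

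Next I convert the sum over subsets into a sum over ordered multi-indices. Each subset $S$ with $k$ elements corresponds to exactly $k!$ multi-indices $I=(i_1,\ldots,i_k)\in[n]^k$ without repetition listing its elements. For such an $I$ and $\rho\in S_S$, put $J=I_\rho=(\rho(i_1),\ldots,\rho(i_k))$. Then $\prod_{i\in S}\tensor{x}{_i^{\rho(i)}}=\tensor{x}{_{i_1}^{j_1}}\cdots\tensor{x}{_{i_k}^{j_k}}$, since the entries commute. Also $\sgn(I;J)=\sgn(\rho)$ by the definition in the statement.

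Conversely, a pair $(I,J)$ with $\sgn(I;J)\neq 0$ has $I$ without repetition and $J=I_\rho$ for a unique permutation $\rho$ of the set underlying $I$. All other pairs contribute $0$. So the map $(I,\rho)\mapsto(I,I_\rho)$ is a bijection onto the pairs with nonzero sign. Summing over all $I$ listing $S$ counts each term of the previous formula $k!$ times, which gives exactly the factor $\frac{1}{k!}$ in the claimed formula.

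The only point needing care is the sign convention. I must check that $\sgn(I;J)$, defined via $J=I_\sigma$ with $\sigma$ a permutation of the entries of $I$, agrees with the sign of $\rho$ as a permutation of the set $S$, independently of the ordering chosen for $I$. This holds because $\sigma=\rho$ as maps on $S$, and the sign of a permutation of a finite set does not depend on any labeling. I expect this bookkeeping to be the main, though minor, obstacle. The rest is direct expansion.
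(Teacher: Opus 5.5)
Your proposal is correct and follows essentially the same route as the paper. You expand $\det(I+tX)$ by the Leibniz formula and note that a $t^k$-term forces the permutation to fix the complement of the chosen $k$-subset, so $\sigma_k^{(n)}$ is the sum of principal $k\times k$ minors with the sign of the restricted permutation; you then pass to ordered multi-indices, where each subset is counted $k!$ times. Your explicit bijection $(I,\rho)\mapsto(I,I_\rho)$ onto the pairs with $\sgn(I;J)\neq 0$ justifies the factor $\frac{1}{k!}$ somewhat more carefully than the paper's brief counting remark.
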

	\begin{proof}
		By definition of the determinant and the polynomials $\sigma^{(n)}_k$, we have 
		\[
		\det(I + tX) = 1 + \sum_{k \in [n]}t^k\sigma^{(n)}_k(X) = \sum_{\sigma\in S_n}\sgn(\sigma)\prod_{i \in [n]}(\tensor{\delta}{_i^{\sigma(i)}} + t\tensor{x}{_i^{\sigma(i)}}),
		\]
		where $\delta$ denotes the Kronecker delta. Note that
		\begin{equation}
			\label{eq: working equation sigma expression 1}
			\prod_{i\in [n]}(\tensor{\delta}{_i^{\sigma(i)}} + t\tensor{x}{_i^{\sigma(i)}}) = \sum_{I\subseteq [n]}\left(\prod_{i \in I}t\tensor{x}{_i^{\sigma(i)}}\right)\left(\prod_{i \notin I}\tensor{\delta}{_i^{\sigma(i)}}\right),
		\end{equation}
		so a summand in \eqref{eq: working equation sigma expression 1} vanishes unless $\sigma$ fixes $[n]\setminus I$. The resulting expression reads 
		\begin{equation}
			\label{eq: working equation sigma expression 1.5}
			1 + \sum_{k \in [n]}t^k\sigma^{(n)}_k(X) = \sum_{\sigma\in S_n}\sum_{I\subseteq [n]}\sgn(\sigma)\left(\prod_{i \in I}t\tensor{x}{_i^{\sigma(i)}}\right)\left(\prod_{i \notin I}\tensor{\delta}{_i^{\sigma(i)}}\right)
		\end{equation}
		
		We now re-index the sum in the right-hand side of \eqref{eq: working equation sigma expression 1.5} in order to isolate the powers of $t$, keeping in mind that a summand in \eqref{eq: working equation sigma expression 1.5} vanishes unless $\sigma$ fixes $[n]\setminus I$. Consider the sets 
		\[
		A_1 := \{(\sigma, I) \in S_n\times \cP([n]): \text{$\sigma$ fixes $[n]\setminus I$}\} \qquad \text{and} \qquad A_2 = \coprod_{I \subseteq [n]}S_I.
		\]
		On these sets consider the functions $f\colon A_1\rightarrow A_2$ defined by $(\sigma, I)\mapsto (I, \restr{\sigma}{I})$ and $g\colon A_2\rightarrow A_1$ defined by $(I, \tau) \mapsto (\tilde{\tau}, I)$, where $\tilde{\tau}$ is the extension of $\tau$ to $[n]$ by declaring $\tilde{\tau}$ to be the identity on $[n]\setminus I$. It is easy to see that $f$ and $g$ are mutual inverses, and therefore bijections. Using these bijections and the fact that if $\sigma$ fixes $[n]\setminus I$, then $\sgn(\sigma) = \sgn(\restr{\sigma}{I})$, we see that
		\begin{align*}
			1 + \sum_{k \in [n]}t^k\sigma^{(n)}_k(X) &= \sum_{(\sigma, I)\in A_1}\sgn(\sigma)\left(\prod_{i \in I}t\tensor{x}{_i^{\sigma(i)}}\right) = \sum_{(I, \sigma)\in A_2}t^{|I|}\sgn(\sigma)\left(\prod_{i \in I}\tensor{x}{_i^{\sigma(i)}}\right) \\
			&= 1+ \sum_{k \in [n]}t^k\sum_{\substack{(I, \sigma)\in A_2\\ |I| = k}}\sgn(\sigma)\left(\prod_{i \in I}\tensor{x}{_i^{\sigma(i)}}\right).
		\end{align*}
		Comparing coefficients of $t^k$, we see that 
		\begin{align}
			\label{eq: working equation sigma expression 2} \sigma^{(n)}_k(X) &= \sum_{I\in \cP([n])_k}\sum_{\sigma \in S_I}\sgn(\sigma)\left(\prod_{i \in I}\tensor{x}{_i^{\sigma(i)}}\right) \\
			\label{eq: working equation sigma expression 3}& = \frac{1}{k!}\sum_{I, J \in [n]^k}\sgn(I;J)\tensor{x}{_{i_1}^{j_1}}\cdots\tensor{x}{_{i_{k}}^{j_{k}}}.
		\end{align}
		The factor $\frac{1}{k!}$ in \eqref{eq: working equation sigma expression 3} compensates for the fact that for a fixed $I_0\subseteq [n]$ of length $k$, in \eqref{eq: working equation sigma expression 2} only on the second index of the matrix entries, all possible permutations of $I_0$ are being summed over, whereas in \eqref{eq: working equation sigma expression 3} also on the first index of the matrix entries, all $k!$ different permutations of $I_0$ are being summed over.
	\end{proof} 
	
	We now combine our results from Lemmas \ref{lem: higher curvature operators} and \ref{lem: expression sigma polynomials} to prove Theorem \ref{thm: expression Pontryagin form}.
	
	\begin{proof}[Proof of Theorem \ref{thm: expression Pontryagin form}]
		Using Lemma \ref{lem: expression sigma polynomials}, we find that
		\begin{align}
			\varpi_k(C) &= \frac{1}{(2\pi)^{2k}(2k)!}\sum_{I, J \in [n]^{2k}}\sgn(I;J)\tensor{\Omega}{_{i_1}^{j_1}}\wedge\cdots\wedge\tensor{\Omega}{_{i_{2k}}^{j_{2k}}} \notag\\
			\label{eq: working equation expressions 1}&\overset{\eqref{eq: relation 2-forms}}{=}  \frac{1}{\pi^{2k}(2k)!}\sum_{I, J \in [n]^{2k}}\eps_I\sgn(I;J)\tensor{\Omega}{^{i_1}^{j_1}}\wedge\cdots\wedge\tensor{\Omega}{^{i_{2k}}^{j_{2k}}}.
		\end{align}
		Note that the summands in \eqref{eq: working equation expressions 1} are only nonvanishing if the multi-index $J$ is a permutation of the multi-index $I$ and both have no repeating indices, for $\sgn(I;J) = 0$ otherwise. This means $I$ and $J$ define the same subset $A \in \cP([n])_{2k}$ and when $A$ is considered as a multi-index of length $2k$ with the natural increasing order, there exist unique permutations $\sigma, \tau \in S_{A}$ such that $I = A_{\sigma}$ and $J = A_{\tau}$. Conversely, any subset $A\subseteq [n]$ of length $2k$ and permutations $\sigma, \tau \in S_{A}$ define such $I$ and $J$ uniquely. So the sum in \eqref{eq: working equation expressions 1} can instead be taking over such $A$, $\sigma$ and $\tau$. It follows that $\eps_I = \eps_{A}$ and
		\[
		\sgn(I;J) = \sgn(\tau\sigma^{-1}) = \sgn(\tau)\sgn(\sigma).
		\]
		Combining this, we see that
		\begin{align}
			\varpi_k(C) &= \frac{1}{\pi^{2k}(2k)!}\sum_{A\in \cP([n])_{2k}}\eps_{A}\sum_{\sigma, \tau \in S_{A}}\sgn(\sigma)\sgn(\tau)\tensor{\Omega}{^{\sigma(a_1)}^{\tau(a_1)}}\wedge\cdots\wedge\tensor{\Omega}{^{\sigma(a_{2k})}^{\tau(a_{2k})}} \notag\\
			\label{eq: working equation expressions 2}&= \frac{1}{\pi^{2k}(2k)!}\frac{(2k-1)!!}{2^kk!}\sum_{A\in \cP([n])_{2k}}\eps_A\left(\sum_{\sigma \in S_A}\sgn(\sigma)\tensor{\Omega}{^{\sigma(a_1)}^{\sigma(a_2)}}\wedge\cdots\wedge\tensor{\Omega}{^{\sigma(a_{2k-1})}^{\sigma(a_{2k})}}\right)^{\wedge 2}\\
			\label{eq: working equation expressions 3}&= \frac{1}{(2\pi)^{2k}k!^2}\sum_{A\in \cP([n])_{2k}}\eps_A\left(\sum_{\sigma \in S_A}\sgn(\sigma)\tensor{\Omega}{^{\sigma(a_1)}^{\sigma(a_2)}}\wedge\cdots\wedge\tensor{\Omega}{^{\sigma(a_{2k-1})}^{\sigma(a_{2k})}}\right)^{\wedge 2},
		\end{align}
		where \eqref{eq: working equation expressions 2} follows by Chern's formula on generalized Pfaffian functions \cite[Ch.\ 2]{Chern1955charclassesRiemannian}.
		
		On the other hand, if $I$ and $J$ are two multi-indices of elements in $[n]$ of length $2k$, then it follows that 
		\begin{align}
			\langle e_I, e_J \rangle_g &= \det[(g(e_{i_a}, e_{j_b}))_{a, b\in [2k]}] = \det[(\eps_{i_a}\delta_{i_a, j_b})_{a, b\in [2k]}] \notag\\
			\label{eq: working equation expressions 4}& = \prod_{i\in I}\eps_i\cdot\det[(\delta_{i_a, j_b})_{a, b\in [2k]}] = \eps_I\sgn(I;J).
		\end{align}		
		Computing $F_{2k}(\hat{C}^{*k}, \hat{C}^{*k})$ using Lemma \ref{lem: higher curvature operators} and \eqref{eq: working equation expressions 4}, we find that
		\begin{align}
			F_{2k}(\hat{C}^{*k}, \hat{C}^{*k}) &\overset{\text{Lem. }\ref{lem: higher curvature operators}}{=} \frac{1}{(2k)!^2}\sum_{\sigma \in S_{4k}}\sgn(\sigma)\sum_{I, J \in [n]^{2k}}\langle e_I, e_J \rangle_g \notag\\
			&\qquad\qquad\cdot ((\tensor{\Omega}{^{i_1}^{i_2}}\wedge\cdots\wedge \tensor{\Omega}{^{i_{2k-1}}^{i_{2k}}})\otimes(\tensor{\Omega}{^{j_1}^{j_2}}\wedge\cdots\wedge \tensor{\Omega}{^{j_{2k-1}}^{j_{2k}}}))_{\sigma} \notag\\
			&\overset{\eqref{eq: working equation expressions 4}}{=} \frac{1}{(2k)!^2}\sum_{\sigma \in S_{4k}}\sgn(\sigma)\sum_{I, J \in [n]^{2k}}\eps_I\sgn(I;J) \notag\\
			&\qquad\qquad\cdot ((\tensor{\Omega}{^{i_1}^{i_2}}\wedge\cdots\wedge \tensor{\Omega}{^{i_{2k-1}}^{i_{2k}}})\otimes(\tensor{\Omega}{^{j_1}^{j_2}}\wedge\cdots\wedge \tensor{\Omega}{^{j_{2k-1}}^{j_{2k}}}))_{\sigma} \notag\\
			&=\sum_{I, J \in [n]^{2k}}\eps_I\sgn(I;J)\notag\\
			\label{eq: working equation expressions 5}&\qquad\qquad\cdot(\tensor{\Omega}{^{i_1}^{i_2}}\wedge\cdots\wedge \tensor{\Omega}{^{i_{2k-1}}^{i_{2k}}}\wedge\tensor{\Omega}{^{j_1}^{j_2}}\wedge\cdots\wedge \tensor{\Omega}{^{j_{2k-1}}^{j_{2k}}}),
		\end{align}
		where the final equality follows from the definition of the wedge-product on differential forms. Again using the fact that the terms in \eqref{eq: working equation expressions 5} are only nonvanishing if the multi-index $J$ is a permutation of the multi-index $I$ and both do not have a repeating index and that in this case $I$ and $J$ uniquely define a set $A\in \cP([n])_{2k}$ and permutations $\sigma, \tau \in S_A$ such that $I = A_{\sigma}$ and $J= A_{\tau}$, we can rewrite \eqref{eq: working equation expressions 5} as
		\begin{align}
			&F_{2k}(\hat{C}^{*k}, \hat{C}^{*k}) \notag\\
			&\qquad=\sum_{A\in \cP([n])_{2k}}\eps_A\sum_{\sigma, \tau \in S_A}\sgn(\sigma)\sgn(\tau) \notag\\
			&\qquad\qquad\cdot(\tensor{\Omega}{^{\sigma(a_1)}^{\sigma(a_2)}}\wedge\cdots\wedge \tensor{\Omega}{^{\sigma(a_{2k-1})}^{\sigma(a_{2k})}}\wedge\tensor{\Omega}{^{\tau(a_1)}^{\tau(a_2)}}\wedge\cdots\wedge \tensor{\Omega}{^{\tau(a_{2k-1})}^{\tau(a_{2k})}})\notag\\
			\label{eq: working equation expressions 6}&\qquad= \sum_{A\in \cP([n])_{2k}}\eps_A\left(\sum_{\sigma \in S_A}\sgn(\sigma)\tensor{\Omega}{^{\sigma(a_1)}^{\sigma(a_2)}}\wedge\cdots\wedge\tensor{\Omega}{^{\sigma(a_{2k-1})}^{\sigma(a_{2k})}}\right)^{\wedge 2}.
		\end{align}
		The result follows from combining \eqref{eq: working equation expressions 3} and \eqref{eq: working equation expressions 6}.
	\end{proof}
	
	\subsection*{Data Availability}
	Data sharing not applicable to this article as no datasets were generated or analyzed during
	the current study.
	
	\section*{Declarations}
	\subsection*{Conflict of interests}
	The author has no competing interests to declare that are relevant to the content of this article.
		
	%%%%%% BIBLIOGRAPHY %%%%%%
	\bibliographystyle{plainurl}
	\bibliography{refs.bib}
\end{document}